\providecommand{\U}[1]{\protect\rule{.1in}{.1in}}
\providecommand{\U}[1]{\protect\rule{.1in}{.1in}}
\providecommand{\U}[1]{\protect\rule{.1in}{.1in}} \textwidth 15.8cm
\theoremstyle{plain}
\newtheorem{theorem}{Theorem}[section]
\newtheorem{proposition}[theorem]{Proposition}
\newtheorem{example}[theorem]{Example}
\newtheorem{remark}[theorem]{Remark}
\newtheorem{definition}[theorem]{Definition}
\numberwithin{equation}{section}
\begin{document}
\title[On multi-ideals and polynomial ideals of Banach spaces]{On multi-ideals and polynomial ideals of
Banach spaces: a new approach to coherence and compatibility}
\author{Daniel Pellegrino and Joilson Ribeiro}

\address{Departamento de Matem\'{a}tica,\newline\indent Universidade Federal da Para\'{\i}ba,\newline\indent 58.051-900 - Jo\~{a}o Pessoa, Brazil.}
\email{dmpellegrino@gmail.com and pellegrino@pq.cnpq.br}

\address{Instituto de Matem\'{a}tica, \newline\indent Universidade Federal da Bahia, \newline\indent Av. Adhemar de Barros, s/n, Sala 268, \newline\indent Salvador, 40170110, Brazil.}
\email{joilsonribeiro@yahoo.com.br}
\thanks{2010 Mathematics Subject Classification: 46G25, 47H60,
46G20, 47L22. }

\begin{abstract}
What is an adequate extension of an operator ideal $\mathcal{I}$ to
the polynomial and multilinear settings? This question motivated the appearance of the concepts of coherent sequences of
polynomial ideals and compatibility of a polynomial ideal with an operator
ideal, introduced by D. Carando \textit{el al.} We propose a different approach by considering
pairs $(\mathcal{U}_{k},\mathcal{M}_{k})_{k=1}^{\infty}$, where $(\mathcal{U}%
_{k})_{k=1}^{\infty}$ is a polynomial ideal and $(\mathcal{M}_{k}%
)_{k=1}^{\infty}$ is a multi-ideal, instead of considering just polynomial
ideals. Our approach ends a discomfort caused by the previous theory: for real scalars the canonical sequence
$(\mathcal{P}_{k})_{k=1}^{\infty}$ of continuous $k$-homogeneous polynomials is not coherent according to the definition
of Carando \textit{et al.}

We apply these new notions to test the factorization method and different
classes that generalise the concept of absolutely summing operator.

\end{abstract}
\keywords{absolutely summing operators; operator ideals; multi-ideals; polynomial ideals.}
\maketitle

%\thanks{The first named author is supported by CNPq Grants 620108/2008-8 (Edital
%Casadinho), 301237/2009-3, PROCAD-NF-Capes and by the National Institute of
%Science and Technology of Mathematics INCT-Mat.}

\section{Introduction and background}

The origin of the theory of operator ideals goes back to 1941, with J.W.
Calkin \cite{cal} and subsequent works of H. Weyl \cite{we} and
A.\ Grothendieck \cite{grote} but only in the 70's, with the work of A.
Pietsch \cite{mono}, the basic concepts were organised and presented (see also
\cite{DJP, HP}). For applications of the subject in different areas of
mathematics we refer the reader to the recent paper \cite{DJP} and for more
historical details we refer to \cite{hist}. The extension to multilinear
mappings, with the concept of multi-ideals, is also due to Pietsch \cite{PPPP}.

The investigation of special sets of homogeneous polynomials and multilinear
mappings between Banach spaces is historically motivated by two different
directions: infinite-dimensional holomorphy (\cite{din, Mu}) or the theory of
operator ideals, polynomial ideals and multi-ideals (\cite{mono, PPPP}). The
holomorphic approach, of course, is mostly focused on polynomials rather than
on multilinear operators (\cite{CDM09, CDM33, Nachbin}). In this paper we will
be mainly motivated by the theory of operator ideals and for this reason we
are also interested in multi-ideals.

Several common multi-ideals and polynomial ideals are usually associated to
some operator ideal; however, the extension of an operator ideal to
polynomials and multilinear mappings is not always a simple task. For example,
the ideal of absolutely summing operators has, at least, eight
possible\ extensions to higher degrees (see, for example, \cite{comp, rm,
dimant, Collect, Matos-N, QM, davidarchiv} and references therein).

The main goal of this paper is to introduce clear general criteria to decide
how multilinear and polynomial generalisations of a given operator ideal must
behave in order to be considered \textquotedblleft adequate\textquotedblright.
Our criteria are defined simultaneously to pairs of ideals of polynomials and
multi-ideals and, to the best of our knowledge, this is the first attempt in
this direction in the literature. The arguments used throughout this paper are
fairly clear and simple in nature but we do believe that they can shed some
light to future works on ideals of operators, polynomials and multilinear mappings.

From now on the letters $E,E_{1},...,E_{n},F,G,H$ will represent Banach spaces
over the same scalar-field $\mathbb{K}=\mathbb{R}$ or $\mathbb{C}$.

An operator ideal $\mathcal{I}$ is a subclass of the class $\mathcal{L}_{1}$
of all continuous linear operators between Banach spaces such that for all
Banach spaces $E$ and $F$ its components%
\[
\mathcal{I}(E;F):=\mathcal{L}_{1}(E;F)\cap\mathcal{I}%
\]
satisfy:

(Oa) $\mathcal{I}(E;F)$ is a linear subspace of $\mathcal{L}_{1}(E;F)$ which
contains the finite rank operators.

(Ob) If $u\in\mathcal{I}(E;F)$, $v\in\mathcal{L}_{1}(G;E)$ and $w\in
\mathcal{L}_{1}(F;H)$, then $w\circ u\circ v\in\mathcal{I}(G;H)$.

The operator ideal is a normed operator ideal if there is a function
$\Vert\cdot\Vert_{\mathcal{I}}\colon\mathcal{I}\longrightarrow\lbrack
0,\infty)$ satisfying

\bigskip

(O1) $\Vert\cdot\Vert_{\mathcal{I}}$ restricted to $\mathcal{I}(E;F)$ is a
norm, for all Banach spaces $E$, $F$.

(O2) $\Vert P_{1}\colon\mathbb{K}\longrightarrow\mathbb{K}:P_{1}%
(\lambda)=\lambda\Vert_{\mathcal{I}}=1.$

(O3) If $u\in\mathcal{I}(E;F)$, $v\in\mathcal{L}_{1}(G;E)$ and $w\in
\mathcal{L}_{1}(F;H)$, then
\[
\Vert w\circ u\circ v\Vert_{\mathcal{I}}\leq\Vert w\Vert\Vert u\Vert
_{\mathcal{I}}\Vert v\Vert.
\]
When $\mathcal{I}(E;F)$ with the norm above is always complete, $\mathcal{I}$
is called a Banach operator ideal.

For each positive integer $n$, let $\mathcal{L}_{n}$ denote the class of all
continuous $n$-linear operators between Banach spaces. An ideal of multilinear
mappings (or multi-ideal) $\mathcal{M}$ is a subclass of the class
$\mathcal{L}=%
%TCIMACRO{\tbigcup \limits_{n=1}^{\infty}}%
%BeginExpansion
{\textstyle\bigcup\limits_{n=1}^{\infty}}
%EndExpansion
\mathcal{L}_{n}$ of all continuous multilinear operators between Banach spaces
such that for a positive integer $n$, Banach spaces $E_{1},\ldots,E_{n}$ and
$F$, the components
\[
\mathcal{M}_{n}(E_{1},\ldots,E_{n};F):=\mathcal{L}_{n}(E_{1},\ldots
,E_{n};F)\cap\mathcal{M}%
\]
satisfy:

\bigskip

(Ma) $\mathcal{M}_{n}(E_{1},\ldots,E_{n};F)$ is a linear subspace of
$\mathcal{L}_{n}(E_{1},\ldots,E_{n};F)$ which contains the $n$-linear mappings
of finite type.

(Mb) If $T\in\mathcal{M}_{n}(E_{1},\ldots,E_{n};F)$, $u_{j}\in\mathcal{L}%
_{1}(G_{j};E_{j})$ for $j=1,\ldots,n$ and $v\in\mathcal{L}_{1}(F;H)$, then
\[
v\circ T\circ(u_{1},\ldots,u_{n})\in\mathcal{M}_{n}(G_{1},\ldots,G_{n};H).
\]

Moreover, $\mathcal{M}$ is a (quasi-) normed multi-ideal if there is a
function $\Vert\cdot\Vert_{\mathcal{M}}\colon\mathcal{M}\longrightarrow
\lbrack0,\infty)$ satisfying

\bigskip

(M1) $\Vert\cdot\Vert_{\mathcal{M}}$ restricted to $\mathcal{M}_{n}%
(E_{1},\ldots,E_{n};F)$ is a (quasi-) norm, for all Banach spaces
$E_{1},\ldots,E_{n}$ and $F.$

(M2) $\Vert T_{n}\colon\mathbb{K}^{n}\longrightarrow\mathbb{K}:T_{n}%
(\lambda_{1},\ldots,\lambda_{n})=\lambda_{1}\cdots\lambda_{n}\Vert
_{\mathcal{M}}=1$ for all $n$,

(M3) If $T\in\mathcal{M}_{n}(E_{1},\ldots,E_{n};F)$, $u_{j}\in\mathcal{L}%
_{1}(G_{j};E_{j})$ for $j=1,\ldots,n$ and $v\in\mathcal{L}_{1}(F;H)$, then
\[
\Vert v\circ T\circ(u_{1},\ldots,u_{n})\Vert_{\mathcal{M}}\leq\Vert
v\Vert\Vert T\Vert_{\mathcal{M}}\Vert u_{1}\Vert\cdots\Vert u_{n}\Vert.
\]
When all the components $\mathcal{M}_{n}(E_{1},\ldots,E_{n};F)$ are complete
under this (quasi-) norm, $\mathcal{M}$ is called a (quasi-) Banach
multi-ideal. For a fixed multi-ideal $\mathcal{M}$ and a positive integer $n$,
the class%
\[
\mathcal{M}_{n}:=\cup_{E_{1},...,E_{n},F}\mathcal{M}_{n}\left(  E_{1}%
,...,E_{n};F\right)
\]
is called ideal of $n$-linear mappings.

Analogously, for each positive integer $n$, let $\mathcal{P}_{n}$ denote the
class of all continuous $n$-homogeneous polynomials between Banach spaces. A
polynomial ideal $\mathcal{Q}$ is a subclass of the class $\mathcal{P}=%
%TCIMACRO{\tbigcup \limits_{n=1}^{\infty}}%
%BeginExpansion
{\textstyle\bigcup\limits_{n=1}^{\infty}}
%EndExpansion
\mathcal{P}_{n}$ of all continuous homogeneous polynomials between Banach
spaces so that for all $n\in\mathbb{N}$ and all Banach spaces $E$ and $F$, the
components
\[
\mathcal{Q}_{n}\left(  ^{n}E;F\right)  :=\mathcal{P}_{n}\left(  ^{n}%
E;F\right)  \cap\mathcal{Q}%
\]
satisfy:

(Pa) $\mathcal{Q}_{n}\left(  ^{n}E;F\right)  $ is a linear subspace of
$\mathcal{P}_{n}\left(  ^{n}E;F\right)  $ which contains the finite-type polynomials.

(Pb) If $u\in\mathcal{L}_{1}\left(  G;E\right)  $, $P\in\mathcal{Q}_{n}\left(
^{n}E;F\right)  $ and $w\in\mathcal{L}_{1}\left(  F;H\right)  $, then
\[
w\circ P\circ u\in\mathcal{Q}_{n}\left(  ^{n}G;H\right)  .
\]
\bigskip

If there exists a map $\left\Vert \cdot\right\Vert _{\mathcal{Q}}%
:\mathcal{Q}\rightarrow\lbrack0,\infty\lbrack$ satisfying

(P1) $\left\Vert \cdot\right\Vert _{\mathcal{Q}}$ restricted to $\mathcal{Q}%
_{n}(^{n}E;F)$ is a (quasi-) norm for all Banach spaces $E$ and $F$ and all
$n$;

(P2) $\left\Vert P_{n}:\mathbb{K}\rightarrow\mathbb{K};\text{ }P_{n}\left(
\lambda\right)  =\lambda^{n}\right\Vert _{\mathcal{Q}}=1$ for all $n$;

(P3) If $u\in\mathcal{L}_{1}(G;E)$, $P\in\mathcal{Q}_{n}(^{n}E;F)$ and
$w\in\mathcal{L}_{1}(F;H),$ then
\[
\left\Vert w\circ P\circ u\right\Vert _{\mathcal{Q}}\leq\left\Vert
w\right\Vert \left\Vert P\right\Vert _{\mathcal{Q}}\left\Vert u\right\Vert
^{n},
\]
$\mathcal{Q}$ is called (quasi-) normed polynomial ideal. If all components
$\mathcal{Q}_{n}\left(  ^{n}E;F\right)  $ are complete, $\left(
\mathcal{Q},\left\Vert \cdot\right\Vert _{\mathcal{Q}}\right)  $ is called a
(quasi-) Banach ideal of polynomials (or (quasi-) Banach polynomial ideal).
For a fixed ideal of polynomials $\mathcal{Q}$ and $n\in\mathbb{N}$, the
class
\[
\mathcal{Q}_{n}:=\cup_{E,F}\mathcal{Q}_{n}\left(  ^{n}E;F\right)
\]
is called ideal of $n$-homogeneous polynomials.

A relevant question in the multilinear/polynomial setting is: given an
operator ideal (for example the class of compact operators, weakly compact
operators, absolutely summing operators, strictly singular operators, etc) how
to define a multi-ideal and a polynomial ideal related to the linear operator
ideal without loosing its essence? How to lift the core of an operator ideal
to polynomials and multilinear mappings?

The crucial point is that in general a given operator ideal has several
different possible extensions to the setting of multi-ideals and polynomial
ideals (see, for example the case of absolutely summing operators \cite{rm,
advances, davidarchiv} and almost summing operators \cite{BBJ-Archiv, JMAA,
joilson}). In order to have a method of evaluating what kind of
multilinear/polynomial extensions of a given operator ideal is more adequate
and less artificial, some efforts have been done in the last years, but these
efforts were not addressed simultaneously to polynomials and multilinear mappings.

In this paper we intend to contribute in the direction of identifying the
precise properties that polynomial and multi-ideals must fulfill
(simultaneously) in order to keep the essence of a given operator ideal. The
paper is organised as follows:

In Section \ref{yyre} we recall the concepts already created for this task
(ideals closed for scalar multiplication, ideals closed under differentiation,
holomorphy types, coherent ideals, compatible ideals).

In Section \ref{yu} we present our new approach to coherence and compatibility
of pairs and in the subsequent sections we test our approaches to the
factorisation method and for pairs of ideals that generalise the ideal of
absolutely summing operators. In the last section we sketch a stronger notion
of coherence and compatibility.

\section{Coherent polynomial ideals, global holomorphy types and related
concepts\label{yyre}}

In this section we recall old and recent concepts that, in some sense,
evaluate how good a multilinear/polynomial extension of an operator ideal is.
The concepts of ideals of polynomials closed under differentiation and closed
for scalar multiplication were introduced in \cite{indagationes} (see also
\cite{BBJP} for related notions) as an attempt of identifying crucial
properties that polynomial ideals must verify in order to maintain some
harmony between the different levels of homogeneity.

From now on $E^{\ast}$ denotes the topological dual of $E$ and we use the
following notation:

\begin{itemize}
\item If $P\in\mathcal{P}_{n}\left(  ^{n}E;F\right)  $, then $\overset{\vee
}{P}$ denotes the unique symmetric $n$-linear mapping associated to $P$.

\item If $P\in\mathcal{P}_{n}\left(  ^{n}E;F\right)  $, then $P_{a^{k}}%
\in\mathcal{P}_{n-k}\left(  ^{n-k}E;F\right)  $ is defined by%
\[
P_{a^{k}}(x):=\overset{\vee}{P}(a,...,a,x,...,x).
\]

\item If $T\in\mathcal{L}_{n}\left(  E_{1},\ldots,E_{n};F\right)  $, then
$T_{a_{1},...,a_{k-1}a_{k+1}...a_{n}}\in\mathcal{L}_{1}\left(  E_{k};F\right)
$ denotes the mapping%
\[
T_{a_{1},...,a_{k-1}a_{k+1}...a_{n}}(x_{k}):=T(a_{1},...,a_{k-1},x_{k}%
,a_{k+1},...,a_{n}).
\]

\item If $T\in\mathcal{L}_{n}\left(  E_{1},\ldots,E_{n};F\right)  $, then
$T_{a_{j}}\in\mathcal{L}_{n-1}\left(  E_{1},...,E_{j-1},E_{j+1},...,E_{n}%
;F\right)  $ is given by%
\[
T_{a_{j}}(x_{1},...,x_{j-1},x_{j+1},...,x_{n}):=T(x_{1},...,x_{j-1}%
,a_{j},x_{j+1},...,x_{n}).
\]

\end{itemize}

\begin{definition}
[csm and cud polynomial ideals \cite{indagationes}]Let $\mathcal{Q}$ be a
polynomial ideal, $n\in\mathbb{N}$, $E$ and $F$ Banach spaces.

$\left(  \mathbf{i}\right)  $ $\mathcal{Q}$ is closed under differentiation
(cud) for $n$, $E$ and $F$ if $\hat{d}P\left(  a\right)  \in\mathcal{Q}%
_{1}\left(  E;F\right)  $ for all $a\in E$ and $P\in\mathcal{Q}_{n}\left(
^{n}E;F\right)  $, where $\hat{d}P\left(  a\right)  $ is the derivative of $P$
at $a$.

$\left(  \mathbf{ii}\right)  $ $\mathcal{Q}$ is closed for scalar
multiplication (csm) for $n$, $E$ and $F$ if $\varphi P\in\mathcal{Q}%
_{n+1}\left(  ^{n+1}E;F\right)  $ for all $\varphi\in E^{\ast}$ and
$P\in\mathcal{Q}_{n}\left(  ^{n}E;F\right)  .$

When $\left(  \mathbf{i}\right)  $ and/or $\left(  \mathbf{ii}\right)  $ holds
true for all $n$, $E$ and $F$ we say that $\mathcal{Q}$ is cud and/or csm.
\end{definition}

The same concept can be translated, \textit{mutatis mutandis}, to multilinear mappings.

Recently, in \cite{CDM09} (for related papers see also \cite{CDM33, CDM}), the
interesting notions of compatible polynomial ideals and coherent ideals were
presented with the same aim of filtering good polynomial extensions of given
operator ideals. This approach (with its nice self-explanatory terminology)
offers, with the notion of compatibility, a clear proposal of classifying when
a polynomial ideal has the spirit of a given operator ideal (our notation
essentially follows \cite{CDM09}):

\begin{definition}
[Compatible polynomial ideals \cite{CDM09}]\label{IdeaisCompativeis} Let
$\mathcal{U}$ be a normed ideal of linear operators. The normed ideal of
$n$-homogeneous polynomials $\mathcal{U}_{n}$ is compatible with $\mathcal{U}$
if there exist positive constants $\alpha_{1}$ and $\alpha_{2}$ such that for
all Banach spaces $E$ and $F,$ the following conditions hold:

$\left(  \mathbf{i}\right)  $ For each $P\in\mathcal{U}_{n}\left(
^{n}E;F\right)  $ and $a\in E$, $P_{a^{n-1}}$ belongs to $\mathcal{U}\left(
E;F\right)  $ and
\[
\left\Vert P_{a^{n-1}}\right\Vert _{\mathcal{U}}\leq\alpha_{1}\left\Vert
P\right\Vert _{\mathcal{U}_{n}}\left\Vert a\right\Vert ^{n-1}%
\]

$\left(  \mathbf{ii}\right)  $ For each $u\in\mathcal{U}\left(  E;F\right)  $
and $\gamma\in E^{\ast}$, $\gamma^{n-1}u$ belongs to $\mathcal{U}_{n}\left(
^{n}E;F\right)  $ and
\[
\left\Vert \gamma^{n-1}u\right\Vert _{\mathcal{U}_{n}}\leq\alpha_{2}\left\Vert
\gamma\right\Vert ^{n-1}\left\Vert u\right\Vert _{\mathcal{U}}%
\]

\end{definition}

\begin{remark}
Sometimes, for the sake of simplicity, we will say that the sequence
$(\mathcal{U}_{k})_{k=1}^{N}$ is compatible with $\mathcal{U}$ (this will mean
that $\mathcal{U}_{k}$ is compatible with $\mathcal{U}$ for all $k=1,...,N$).
\end{remark}

\begin{definition}
[Coherent polynomial ideals \cite{CDM09}]\label{IdeaisCoerentes} Consider a
sequence $(\mathcal{U}_{k})_{k=1}^{N}$, where for each $k$, $\mathcal{U}_{k}$
is an ideal of $k$-homogeneous polynomials and $N$ is eventually infinity. The
sequence $(\mathcal{U}_{k})_{k=1}^{N}$ is a coherent sequence of polynomial
ideals if there exist positive constants $\beta_{1}$ and $\beta_{2}$ such that
for all Banach spaces the following conditions hold for $k=1,...,N-1$:

$\left(  \mathbf{i}\right)  $ For each $P\in\mathcal{U}_{k+1}\left(
^{k+1}E;F\right)  $ and $a\in E$, $P_{a}$ belongs to $\mathcal{U}_{k}\left(
^{k}E;F\right)  $ and
\[
\left\Vert P_{a}\right\Vert _{\mathcal{U}_{k}}\leq\beta_{1}\left\Vert
P\right\Vert _{\mathcal{U}_{k+1}}\left\Vert a\right\Vert .
\]

$\left(  \mathbf{ii}\right)  $ For each $P\in\mathcal{U}_{k}\left(
^{k}E;F\right)  $ and $\gamma\in E^{\ast}$, $\gamma P$ belongs to
$\mathcal{U}_{k+1}\left(  ^{k+1}E;F\right)  $ and
\[
\left\Vert \gamma P\right\Vert _{\mathcal{U}_{k+1}}\leq\beta_{2}\left\Vert
\gamma\right\Vert \left\Vert P\right\Vert _{\mathcal{U}_{k}}%
\]

\end{definition}

The philosophy of the concepts above is that given positive integers $n_{1}$
and $n_{2}$, the respective levels of $n_{1}$-linearity and $n_{2}$-linearity
of a given multi-ideal (or polynomial ideal) must show some relevant
inter-connection and also keep the spirit of the original level $(n=1)$.

There is no doubt that the concepts of compatible polynomial ideals and
coherent ideals have added important contribution to the theory of polynomial
ideals. However, an operator ideal ${\mathcal{I}}$ can be always extended to
the multilinear and polynomial settings (at least in an abstract sense; for
details see \cite{not}); so, there is no apparent reason to consider the
concepts of compatibility and coherence just for polynomials (or just for
multilinear mappings separately). Our proposal offers significant variations
of these notions (as it will be clear in the next paragraph) by considering
pairs $(\mathcal{U}_{k},\mathcal{M}_{k})_{k=1}^{\infty}$, where $(\mathcal{U}%
_{k})_{k=1}^{\infty}$ is a polynomial ideal and $(\mathcal{M}_{k}%
)_{k=1}^{\infty}$ is a multi-ideal. So, this new approach deals simultaneously
with polynomials and multilinear operators and, of course, asks for some
harmony between $(\mathcal{U}_{k})_{k=1}^{\infty}$ and $(\mathcal{M}%
_{k})_{k=1}^{\infty}.$

It is very important to recall that, for the case of real scalars, the
canonical sequence $(\mathcal{P}_{k})_{k=1}^{\infty}$, composed by the ideals
of continuous $k$-homogeneous polynomials with the $\sup$ norm, is not
coherent according to Definition \ref{IdeaisCoerentes}\ (this remark appears
in \cite{CDM09} and is based on estimates for the norms of certain special
homogeneous polynomials used in \cite[Proposition 8.5]{BBJP}). This result
seems to be uncomfortable since the canonical sequence $(\mathcal{P}%
_{k})_{k=1}^{\infty}$ should be a prototype of the essence of coherence. Using
our forthcoming approach to the notion of coherence, the pair $(\mathcal{P}%
_{k},\mathcal{L}_{k})_{k=1}^{\infty}$ (composed by the ideals of continuous
$n$-homogeneous polynomials and continuous $n$-linear operators, with the
$\sup$ norm) will be coherent and compatible with the ideal of continuous
linear operators.

It is worth mentioning that all these concepts have some connection with the
concept of Property (B) defined in \cite{BBJP} and also with the idea of
holomorphy types, due to L. Nachbin (and with a similar notion of global
holomorphy types, which appears in \cite{BBJP}). In \cite{CDM09} the reader
can find some useful comparisons between the concepts above. As mentioned in
\cite{CDM09}, coherent sequences are always global holomorphy types. It is
interesting to note that in \cite[Example 1.15]{CDM09} the exact example that
fails to be coherent/compatible with the ideal of absolutely summing operators
also fails to be a global holomorphy type. For recent striking applications of
the theory of holomorphy types we refer to \cite{fou}.

%\newpage

\section{Coherence and compatibility: a new approach \label{yu}}

From now on $(\mathcal{U}_{k},\mathcal{M}_{k})_{k=1}^{N}$ is a sequence, where
each $\mathcal{U}_{k}$ is a (quasi-) normed ideal of $k$-homogeneous
polynomials and each $\mathcal{M}_{k}$ is a (quasi-) normed ideal of
$k$-linear mappings. The parameter $N$ can be eventually infinity. Motivated
by the argument that the notions of compatibility and coherence should be
defined simultaneously for polynomials and multilinear mappings, we propose
the following approach:

\begin{definition}
[Compatible pair of ideals]\label{d1}Let $\mathcal{U}$ be a normed operator
ideal and $N\in\left(  \mathbb{N\smallsetminus}\left\{  1\right\}  \right)
\cup\left\{  \infty\right\}  $. A sequence $\left(  \mathcal{U}_{n}%
,\mathcal{M}_{n}\right)  _{n=1}^{N}$ with $\mathcal{U}_{1}=\mathcal{M}%
_{1}=\mathcal{U}$ is compatible with $\mathcal{U}$ if there exist positive
constants $\alpha_{1},\alpha_{2},\alpha_{3},\alpha_{4}$ such that for all
Banach spaces $E$ and $F,$ the following conditions hold for all $n\in\left\{
2,...,N\right\}  $:

(CP 1) If $k\in\left\{  1,\ldots,n\right\}  ,$ $T\in\mathcal{M}_{n}\left(
E_{1},...,E_{n};F\right)  $ and $a_{j}\in E_{j}$ for all $j\in\left\{
1,...,n\right\}  \mathbb{\smallsetminus}\left\{  k\right\}  $, then
\[
T_{a_{1},\ldots,a_{k-1},a_{k+1},\ldots,a_{n}}\in\mathcal{U}\left(
E_{k};F\right)
\]
and
\[
\left\Vert T_{a_{1},\ldots,a_{k-1},a_{k+1},\ldots,a_{n}}\right\Vert
_{\mathcal{U}}\leq\alpha_{1}\left\Vert T\right\Vert _{\mathcal{M}_{n}%
}\left\Vert a_{1}\right\Vert \ldots\left\Vert a_{k-1}\right\Vert \left\Vert
a_{k+1}\right\Vert \ldots\left\Vert a_{n}\right\Vert .
\]
(CP 2) If $P\in\mathcal{U}_{n}\left(  ^{n}E;F\right)  $ and $a\in E$, then
$P_{a^{n-1}}\in\mathcal{U}\left(  E;F\right)  $ and%
\[
\left\Vert P_{a^{n-1}}\right\Vert _{\mathcal{U}}\leq\alpha_{2}\left\Vert
\overset{\vee}{P}\right\Vert _{\mathcal{M}_{n}}\left\Vert a\right\Vert
^{n-1}.
\]
(CP 3) If $u\in\mathcal{U}\left(  E_{n};F\right)  ,$ $\gamma_{j}\in
E_{j}^{\ast}$ for all $j=1,....,n-1$, then
\[
\gamma_{1}\cdots\gamma_{n-1}u\in\mathcal{M}_{n}\left(  E_{1},..,E_{n}%
;F\right)
\]
and
\[
\left\Vert \gamma_{1}\cdots\gamma_{n-1}u\right\Vert _{\mathcal{M}_{n}}%
\leq\alpha_{3}\left\Vert \gamma_{1}\right\Vert ...\left\Vert \gamma
_{n-1}\right\Vert \left\Vert u\right\Vert _{\mathcal{U}}.
\]
(CP 4) If $u\in\mathcal{U}\left(  E;F\right)  ,$ $\gamma\in E^{\ast}$, then
$\gamma^{n-1}u\in\mathcal{U}_{n}\left(  ^{n}E;F\right)  $ and
\[
\left\Vert \gamma^{n-1}u\right\Vert _{\mathcal{U}_{n}}\leq\alpha_{4}\left\Vert
\gamma\right\Vert ^{n-1}\left\Vert u\right\Vert _{\mathcal{U}}.
\]
(CP 5) $P$ belongs to $\mathcal{U}_{n}\left(  ^{n}E;F\right)  $ if, and only
if, $\overset{\vee}{P}$ belongs to $\mathcal{M}_{n}\left(  ^{n}E;F\right)  $.
\end{definition}

\begin{definition}
[Coherent pair of ideals]\label{d2}Let $\mathcal{U}$ be a normed operator
ideal and $N\in\mathbb{N}\cup\left\{  \infty\right\}  .$ A sequence $\left(
\mathcal{U}_{k},\mathcal{M}_{k}\right)  _{k=1}^{N},$ with $\mathcal{U}%
_{1}=\mathcal{M}_{1}=\mathcal{U}$, is coherent if there exist positive
constants $\beta_{1},\beta_{2},\beta_{3},\beta_{4}$ such that for all Banach
spaces $E$ and $F$ the following conditions hold for $k=1,...,N-1$:

(CH 1) If $T\in\mathcal{M}_{k+1}\left(  E_{1},...,E_{k+1};F\right)  $ and
$a_{j}\in E_{j}$ for $j=1,\ldots,k+1,$ then
\[
T_{a_{j}}\in\mathcal{M}_{k}\left(  E_{1},\ldots,E_{j-1},E_{j+1},\ldots
,E_{k+1};F\right)
\]
and
\[
\left\Vert T_{a_{j}}\right\Vert _{\mathcal{M}_{k}}\leq\beta_{1}\left\Vert
T\right\Vert _{\mathcal{M}_{k+1}}\left\Vert a_{j}\right\Vert .
\]
(CH 2) If $P\in\mathcal{U}_{k+1}\left(  ^{k+1}E;F\right)  ,$ $a\in E$, then
$P_{a}$ belongs to $\mathcal{U}_{k}\left(  ^{k}E;F\right)  $ and
\[
\left\Vert P_{a}\right\Vert _{\mathcal{U}_{k}}\leq\beta_{2}\left\Vert
\overset{\vee}{P}\right\Vert _{\mathcal{M}_{k+1}}\left\Vert a\right\Vert .
\]
(CH 3) If $T\in\mathcal{M}_{k}\left(  E_{1},...,E_{k};F\right)  ,$ $\gamma\in
E_{k+1}^{\ast}$, then
\[
\gamma T\in\mathcal{M}_{k+1}\left(  E_{1},...,E_{k+1};F\right)  \text{ and
}\left\Vert \gamma T\right\Vert _{\mathcal{M}_{k+1}}\leq\beta_{3}\left\Vert
\gamma\right\Vert \left\Vert T\right\Vert _{\mathcal{M}_{k}}.
\]
(CH 4)If $P\in\mathcal{U}_{k}\left(  ^{k}E;F\right)  ,$ $\gamma\in E^{\ast}$,
then
\[
\gamma P\in\mathcal{U}_{k+1}\left(  ^{k+1}E;F\right)  \text{ and }\left\Vert
\gamma P\right\Vert _{\mathcal{U}_{k+1}}\leq\beta_{4}\left\Vert \gamma
\right\Vert \left\Vert P\right\Vert _{\mathcal{U}_{k}}.
\]
(CH 5) For all $k=1,...,N,$ $P$ belongs to $\mathcal{U}_{k}\left(
^{k}E;F\right)  $ if, and only if, $\overset{\vee}{P}$ belongs to
$\mathcal{M}_{k}\left(  ^{k}E;F\right)  $.
\end{definition}

\begin{remark}
\label{yv}Note that Definition \ref{d1} is quite different from the concept of
compatibility from Carando\textit{ et al}. For example, our approach asks for
universal constants $\alpha_{1},\alpha_{2},\alpha_{3},\alpha_{4}$ (that not
depend on $n$). It is also worth mentioning that a coherent sequence $\left(
\mathcal{U}_{k},\mathcal{M}_{k}\right)  _{k=1}^{N}$ is not necessarily
compatible with $\mathcal{U}_{1}$. If $\beta_{1}=\beta_{2}=\beta_{3}=\beta
_{4}=1$ then the coherence of a sequence $\left(  \mathcal{U}_{k}%
,\mathcal{M}_{k}\right)  _{k=1}^{N}$ easily implies in the compatibility with
$\mathcal{U}_{1}$. In the Example \ref{ffiiu} we also show that a sequence
$\left(  \mathcal{U}_{k},\mathcal{M}_{k}\right)  _{k=1}^{N}$ which is
compatible with $\mathcal{U}_{1}$ may fail to be coherent.

A weaker concept of coherence of pairs, where the constants $\alpha_{1}%
,\alpha_{2},\alpha_{3},\alpha_{4}$ may depend on $k$ may be also interesting.
\end{remark}

As we have mentioned before, for the real case the canonical sequence
$(\mathcal{P}_{k})_{k=1}^{\infty}$ is not coherent according to the original
definition of Carando, Dimant and Muro. The following straightforward
proposition shows that the situation is different according to the new approach:

\begin{proposition}
The pair $(\mathcal{P}_{k},\mathcal{L}_{k})_{k=1}^{\infty}$ (composed by the
ideals of continuous $n$-homogeneous polynomials and continuous $n$-linear
operators, with the $\sup$ norm) is coherent and compatible with the ideal of
continuous linear operators.
\end{proposition}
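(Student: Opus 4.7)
The proof plan is entirely computational: each of the ten conditions (CP 1)--(CP 5) and (CH 1)--(CH 5) follows immediately from the definition of the sup norm together with the classical correspondence between symmetric $k$-linear maps and $k$-homogeneous polynomials. The key observation is that every constant can be chosen equal to $1$, so the universality requirement demanded by Definitions \ref{d1} and \ref{d2} is automatic.

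First I would treat the coherence axioms. For (CH 1), if $T\in\mathcal{L}_{k+1}(E_{1},\ldots,E_{k+1};F)$ and $a_{j}\in E_{j}$, then directly from the definition of the sup norm of a multilinear operator one obtains $\|T_{a_{j}}\|\leq\|T\|\,\|a_{j}\|$, so $\beta_{1}=1$. For (CH 2), writing $P_{a}(x)=\overset{\vee}{P}(a,x,\ldots,x)$ one reads off $\|P_{a}\|\leq\|\overset{\vee}{P}\|\,\|a\|$, giving $\beta_{2}=1$. For (CH 3) and (CH 4) the triangle and submultiplicative inequalities for the sup norm directly yield $\|\gamma T\|\leq\|\gamma\|\,\|T\|$ and $\|\gamma P\|\leq\|\gamma\|\,\|P\|$, so $\beta_{3}=\beta_{4}=1$. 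Finally, (CH 5) is exactly the well-known equivalence that a $k$-homogeneous polynomial is continuous if and only if its associated symmetric $k$-linear mapping is continuous; this is a standard fact (appearing, e.g., in Mujica or Dineen), and I would simply cite it rather than reprove it.

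Next I would dispatch the compatibility axioms in the same spirit. Condition (CP 1) is just the iteration of (CH 1): fixing all variables except the $k$-th yields a linear map whose norm is bounded by $\|T\|$ times the product of the norms of the fixed vectors, so $\alpha_{1}=1$. Condition (CP 2) is the analogue of (CH 2) with $n-1$ variables frozen at $a$: one gets $\|P_{a^{n-1}}(x)\|\leq\|\overset{\vee}{P}\|\,\|a\|^{n-1}\,\|x\|$, hence $\alpha_{2}=1$. Conditions (CP 3) and (CP 4) follow from the submultiplicativity of the sup norm of a product of linear functionals with an operator, giving $\alpha_{3}=\alpha_{4}=1$. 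Condition (CP 5) coincides with (CH 5) and is again the standard polarisation equivalence.

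There is essentially no obstacle here: the entire content of the proposition is that the canonical pair $(\mathcal{P}_{k},\mathcal{L}_{k})_{k=1}^{\infty}$ satisfies all ten axioms \emph{with uniform constants equal to $1$}. The only non-trivial ingredient used is the classical correspondence between continuous symmetric multilinear maps and continuous homogeneous polynomials, which is invoked for (CH 5) and (CP 5). What is conceptually interesting, and worth emphasising in the proof, is that the failure of the pure polynomial sequence $(\mathcal{P}_{k})_{k=1}^{\infty}$ to be coherent for real scalars (as recalled in the paragraph preceding the proposition) arises precisely because one is forced to relate $\|P_{a}\|$ to $\|P\|$ rather than to $\|\overset{\vee}{P}\|$; replacing the polynomial norm by the multilinear norm on the right-hand side of (CH 2) and (CP 2), which is exactly what the new approach does, removes the polarisation constant and restores coherence with $\beta_{2}=1$.
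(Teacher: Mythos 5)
Your verification is correct and is exactly the ``straightforward'' computation the paper has in mind (the paper in fact omits the proof entirely): all ten conditions hold with constants equal to $1$ for the sup norms, and the only nontrivial input is the polarisation equivalence for (CH 5)/(CP 5). Your closing observation --- that coherence is rescued precisely because (CH 2) and (CP 2) bound $\left\Vert P_{a}\right\Vert$ by $\left\Vert \overset{\vee}{P}\right\Vert$ rather than by $\left\Vert P\right\Vert$, thereby avoiding the polarisation constant that breaks the Carando--Dimant--Muro definition over the reals --- is exactly the point of the proposition.
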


\section{The Factorisation Method\label{jt0}}

The factorisation method is an important abstract way of extending an operator
ideal to polynomials and multilinear mappings. In this section we show that
the sequence of pairs obtained by this method is coherent and compatible with
the original ideal.

For an operator ideal $\mathcal{I}$, an $n$-linear mapping $A\in
\mathcal{L}\left(  E_{1},\ldots,E_{n};F\right)  $ is of type $\mathcal{L}%
\left(  ^{n}\mathcal{I}\right)  $ if there are Banach spaces $G_{1}%
,\ldots,G_{n}$, linear operators $u_{j}\in\mathcal{I}\left(  E_{j}%
,G_{j}\right)  $, $j=1,\ldots,n$ and $B\in\mathcal{L}\left(  G_{1}%
,\ldots,G_{n};F\right)  $ such that
\begin{equation}
A=B\circ\left(  u_{1},\ldots,u_{n}\right)  . \label{cc}%
\end{equation}
In this case we write
\[
A\in\mathcal{L}\left(  ^{n}\mathcal{I}\right)  \left(  E_{1},\ldots
,E_{n};F\right)  ,
\]
and define
\[
\left\Vert A\right\Vert _{\mathcal{L}\left(  ^{n}\mathcal{I}\right)  }%
=\inf\left\Vert B\right\Vert \left\Vert u_{1}\right\Vert _{\mathcal{I}}%
,\ldots,\left\Vert u_{n}\right\Vert _{\mathcal{I}},
\]
where the infimum is taken over all possible factorisations (\ref{cc}).

The following definition will be useful:

\begin{definition}
\label{or}If $\mathcal{M}=\left(  \mathcal{M}_{n}\right)  _{n=1}^{\infty}$ is
a (quasi-) normed multi-ideal, $(\mathcal{P}_{\mathcal{M}}^{n})_{n=1}^{\infty
}$ is a sequence such that $P\in\mathcal{P}_{\mathcal{M}}^{n}(^{n}E;F)$ if and
only if $\overset{\vee}{P}\in\mathcal{M}_{n}(^{n}E;F).$ Moreover%
\[
\left\Vert P\right\Vert _{\mathcal{P}_{\mathcal{M}}^{n}}:=\left\Vert
\overset{\vee}{P}\right\Vert _{\mathcal{M}_{n}}.
\]

\end{definition}

\begin{proposition}
(see \cite[page 46]{BBJP}) If $\mathcal{M}=\left(  \mathcal{M}_{n}\right)
_{n=1}^{\infty}$ is a (quasi-) Banach multi-ideal then $\mathcal{P}%
_{\mathcal{M}}:=(\mathcal{P}_{\mathcal{M}}^{k})_{k=1}^{\infty}$ is a (quasi-)
Banach polynomial ideal.
\end{proposition}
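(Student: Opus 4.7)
The plan is to transport each axiom of a (quasi-) Banach polynomial ideal across the linear bijection $P \longleftrightarrow \overset{\vee}{P}$ between $\mathcal{P}_n(^nE;F)$ and the space of symmetric $n$-linear mappings, using the corresponding axiom of the multi-ideal $\mathcal{M}$. So I would verify (Pa), (Pb), (P1), (P2), (P3), and completeness in sequence.

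For (Pa), linearity of $P\mapsto\overset{\vee}{P}$ makes $\mathcal{P}_{\mathcal{M}}^n(^nE;F)$ a linear subspace of $\mathcal{P}_n(^nE;F)$, and a finite-type polynomial $P(x)=\sum_{i=1}^m\varphi_i(x)^n y_i$ satisfies
\[
\overset{\vee}{P}(x_1,\ldots,x_n)=\sum_{i=1}^m \varphi_i(x_1)\cdots\varphi_i(x_n)\,y_i,
\]
which is a finite-type $n$-linear mapping and hence belongs to $\mathcal{M}_n$ by (Ma). For (P1), $\|\cdot\|_{\mathcal{P}_{\mathcal{M}}^n}$ inherits the (quasi-) norm structure from $\|\cdot\|_{\mathcal{M}_n}$ through the same linear bijection. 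For (P2), the symmetric multilinear form associated with $P_n(\lambda)=\lambda^n$ is exactly $T_n(\lambda_1,\ldots,\lambda_n)=\lambda_1\cdots\lambda_n$, so (M2) gives $\|P_n\|_{\mathcal{P}_{\mathcal{M}}^n}=\|T_n\|_{\mathcal{M}_n}=1$.

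For (Pb) and (P3), the key identity is
\[
\overset{\vee}{w\circ P\circ u} \;=\; w\circ\overset{\vee}{P}\circ(u,\ldots,u),
\]
which holds because the right-hand side is symmetric and agrees with the left-hand side on the diagonal; then uniqueness of the symmetric form forces equality. Applying (Mb) we get $w\circ P\circ u\in\mathcal{P}_{\mathcal{M}}^n(^nG;H)$, and (M3) gives
\[
\|w\circ P\circ u\|_{\mathcal{P}_{\mathcal{M}}^n}=\|w\circ\overset{\vee}{P}\circ(u,\ldots,u)\|_{\mathcal{M}_n}\le \|w\|\,\|P\|_{\mathcal{P}_{\mathcal{M}}^n}\,\|u\|^n.
\]

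The main obstacle is completeness. Given a Cauchy sequence $(P_k)$ in $\mathcal{P}_{\mathcal{M}}^n(^nE;F)$, by definition $(\overset{\vee}{P_k})$ is Cauchy in $\mathcal{M}_n(E,\ldots,E;F)$, hence converges in $\|\cdot\|_{\mathcal{M}_n}$ to some $T\in\mathcal{M}_n$. The delicate point is to show $T$ is symmetric, so that it arises from some polynomial $P$. For this one uses the standard fact that, on every component, the (quasi-) multi-ideal norm dominates the usual sup-norm: given $S\in\mathcal{M}_n$, unit vectors $x_j\in E_j$ and $y^{*}\in F^{*}$ of norm one, composing $S$ on the left with $y^{*}$ and on the right with the scalar embeddings $\lambda\mapsto\lambda x_j$ and applying (M2) and (M3) yields $\|S\|_{\mathcal{L}_n}\le c\|S\|_{\mathcal{M}_n}$ for a constant $c$ depending only on the quasi-norm constant. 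Therefore $\overset{\vee}{P_k}\to T$ pointwise, and symmetry passes to the limit. Setting $P(x):=T(x,\ldots,x)$ produces $P\in\mathcal{P}_{\mathcal{M}}^n(^nE;F)$ with $\overset{\vee}{P}=T$ and $\|P_k-P\|_{\mathcal{P}_{\mathcal{M}}^n}=\|\overset{\vee}{P_k}-T\|_{\mathcal{M}_n}\to 0$, which finishes the proof.
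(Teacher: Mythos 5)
Your proof is correct and complete: the transport of (Pa), (Pb), (P1)--(P3) across the linear bijection $P\leftrightarrow\overset{\vee}{P}$ is exactly the standard argument, and you correctly isolate the one nontrivial point in the completeness step, namely that (M2) and (M3) force $\|S\|\leq\|S\|_{\mathcal{M}_{n}}$ on every component, so that the $\mathcal{M}_{n}$-limit of the symmetric mappings $\overset{\vee}{P_{k}}$ is itself symmetric and hence of the form $\overset{\vee}{P}$. The paper does not reproduce a proof (it only cites \cite[page 46]{BBJP}), and your argument is the one that reference uses, so there is nothing further to compare.
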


For all $n$, $\mathcal{L}\left(  ^{n}\mathcal{I}\right)  $ is a complete
$\left(  1/n\right)  $-normed ideal of $n$-linear mappings; hence $\left(
\mathcal{L}\left(  ^{n}\mathcal{I}\right)  \right)  _{n=1}^{\infty}$ is a
quasi-Banach multi-ideal and $\left(  \mathcal{P}_{\mathcal{L}\left(
^{n}\mathcal{I}\right)  }^{n}\right)  _{n=1}^{\infty},$ constructed as in
Definition \ref{or}, is a quasi-Banach polynomial ideal.

The proof that the sequence $\left(  \left(  \mathcal{P}_{\mathcal{L}\left(
^{n}\mathcal{I}\right)  }^{n},\left\Vert .\right\Vert _{\mathcal{P}%
_{\mathcal{L}\left(  ^{n}\mathcal{I}\right)  }^{n}}\right)  ,\left(
\mathcal{L}\left(  ^{n}\mathcal{I}\right)  ,\left\Vert .\right\Vert
_{\mathcal{L}\left(  ^{n}\mathcal{I}\right)  }\right)  \right)  _{n=1}%
^{\infty}$ is coherent and compatible with the ideal $\mathcal{I}$\ will be an
immediate consequence of the next results.

\begin{proposition}
\label{FatoracaoCoerente1} If $P\in\mathcal{P}_{\mathcal{L}\left(
^{n+1}\mathcal{I}\right)  }^{n+1}\left(  ^{n+1}E;F\right)  $ and $a\in E$,
then $P_{a}$ belongs to $\mathcal{P}_{\mathcal{L}\left(  ^{n}\mathcal{I}%
\right)  }^{n}\left(  ^{n}E;F\right)  $ and
\[
\left\Vert P_{a}\right\Vert _{\mathcal{P}_{\mathcal{L}\left(  ^{n}%
\mathcal{I}\right)  }^{n}}\leq\left\Vert \overset{\vee}{P}\right\Vert
_{\mathcal{L}\left(  ^{n+1}\mathcal{I}\right)  }\left\Vert a\right\Vert .
\]

\end{proposition}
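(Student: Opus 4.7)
The plan is to unpack the definitions and push a factorisation of $\overset{\vee}{P}$ through the partial evaluation at $a$. First I would identify the symmetric $n$-linear mapping associated to $P_a$: since $\overset{\vee}{P}$ is symmetric in all $n+1$ variables, the map
\[
S(x_1,\ldots,x_n):=\overset{\vee}{P}(a,x_1,\ldots,x_n)
\]
is $n$-linear and symmetric in $(x_1,\ldots,x_n)$, and it satisfies $S(x,\ldots,x)=P_a(x)$; by uniqueness, $\overset{\vee}{P_a}=S$. So the goal becomes showing $S\in\mathcal{L}(^n\mathcal{I})(E,\ldots,E;F)$ with the stated norm estimate.

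Next I would take an arbitrary factorisation witnessing membership of $\overset{\vee}{P}$ in $\mathcal{L}(^{n+1}\mathcal{I})$, namely
\[
\overset{\vee}{P}=B\circ(u_1,\ldots,u_{n+1})
\]
with $u_j\in\mathcal{I}(E;G_j)$ and $B\in\mathcal{L}(G_1,\ldots,G_{n+1};F)$. Freezing the first slot and defining
\[
\tilde B(y_1,\ldots,y_n):=B\bigl(u_1(a),y_1,\ldots,y_n\bigr),
\]
I obtain $\tilde B\in\mathcal{L}(G_2,\ldots,G_{n+1};F)$ with $\|\tilde B\|\leq \|B\|\,\|u_1\|\,\|a\|$, and the identity $S=\tilde B\circ(u_2,\ldots,u_{n+1})$ exhibits $S$ as a member of $\mathcal{L}(^n\mathcal{I})$.

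The only subtle point, and the place I would be most careful, is the interplay between the operator norm $\|u_1\|$ and the ideal norm $\|u_1\|_{\mathcal{I}}$: using the standard fact $\|u_1\|\leq\|u_1\|_{\mathcal{I}}$ (immediate from (O2) and (O3) applied to $\lambda\mapsto\lambda$), I can bound
\[
\|S\|_{\mathcal{L}(^n\mathcal{I})}\leq \|\tilde B\|\,\|u_2\|_{\mathcal{I}}\cdots\|u_{n+1}\|_{\mathcal{I}}\leq \|B\|\,\|u_1\|_{\mathcal{I}}\cdots\|u_{n+1}\|_{\mathcal{I}}\,\|a\|.
\]
Taking the infimum over all factorisations of $\overset{\vee}{P}$ yields $\|\overset{\vee}{P_a}\|_{\mathcal{L}(^n\mathcal{I})}\leq \|\overset{\vee}{P}\|_{\mathcal{L}(^{n+1}\mathcal{I})}\,\|a\|$, which by Definition \ref{or} is exactly the desired inequality $\|P_a\|_{\mathcal{P}_{\mathcal{L}(^n\mathcal{I})}^n}\leq\|\overset{\vee}{P}\|_{\mathcal{L}(^{n+1}\mathcal{I})}\|a\|$. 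No real obstacle arises; the proof is essentially a bookkeeping exercise once one notices that the factorisation data for $\overset{\vee}{P}$ restricts cleanly to factorisation data for $\overset{\vee}{P_a}$ by absorbing $u_1(a)$ into the multilinear part $B$.
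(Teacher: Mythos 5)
Your proof is correct and follows essentially the same route as the paper: factor $\overset{\vee}{P}=B\circ(u_1,\ldots,u_{n+1})$, absorb the evaluated vector $u_j(a)$ into the multilinear part, bound the operator norm of that $u_j$ by its ideal norm, and take the infimum over factorisations. The only cosmetic difference is that you freeze the first slot while the paper freezes the last, which is immaterial by the symmetry of $\overset{\vee}{P}$.
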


\begin{proof}
There exist Banach spaces $G_{1},...,G_{n+1}$, linear operators $u_{j}%
\in\mathcal{I}_{j}\left(  E;G_{j}\right)  $, $j=1,\ldots,n+1$, and a
multilinear mapping $B\in\mathcal{L}\left(  G_{1},...,G_{n+1};F\right)  $ such
that
\begin{equation}
\overset{\vee}{P}\left(  x_{1},\ldots,x_{n+1}\right)  =B\left(  u_{1}\left(
x_{1}\right)  ,\ldots,u_{n+1}\left(  x_{n+1}\right)  \right)  . \label{pqwe}%
\end{equation}
So
\begin{align*}
\left(  P_{a}\right)  ^{\vee}\left(  x_{1},...,x_{n}\right)   &
=\overset{\vee}{P}\left(  x_{1},\ldots,x_{n},a\right)  =B\left(  u_{1}\left(
x_{1}\right)  ,\ldots,u_{n}(x_{n}),u_{n+1}\left(  a\right)  \right) \\
&  =B_{u_{n+1}\left(  a\right)  }\left(  u_{1}\left(  x_{1}\right)
,\ldots,u_{n}\left(  x_{n}\right)  \right)  .
\end{align*}
Hence, since $\left\Vert u_{n+1}\right\Vert \leq\left\Vert u_{n+1}\right\Vert
_{\mathcal{I}},$ we have
\[
\left\Vert P_{a}\right\Vert _{\mathcal{P}_{\mathcal{L}\left(  ^{n}%
\mathcal{I}\right)  }^{n}}=\left\Vert \left(  P_{a}\right)  ^{\vee}\right\Vert
_{\mathcal{L}\left(  ^{n}\mathcal{I}\right)  }\leq\left\Vert B_{u_{n+1}\left(
a\right)  }\right\Vert
%TCIMACRO{\tprod \limits_{j=1}^{n}}%
%BeginExpansion
{\textstyle\prod\limits_{j=1}^{n}}
%EndExpansion
\left\Vert u_{j}\right\Vert _{\mathcal{I}}\leq\left\Vert a\right\Vert
\left\Vert B\right\Vert
%TCIMACRO{\tprod \limits_{j=1}^{n+1}}%
%BeginExpansion
{\textstyle\prod\limits_{j=1}^{n+1}}
%EndExpansion
\left\Vert u_{j}\right\Vert _{\mathcal{I}}.
\]
for all representation (\ref{pqwe}) and the proof is completed when
considering the infimum over all such representations.
\end{proof}

The next result is inspired in \cite[Proposition 3.1]{CDM09}:

\begin{proposition}
\label{FatoracaoCoerente2} If $P\in\mathcal{P}_{\mathcal{L}\left(
^{n}\mathcal{I}\right)  }^{n}\left(  ^{n}E;F\right)  $ and $\gamma\in E^{\ast
}$, then $\gamma P\in$ $\mathcal{P}_{\mathcal{L}\left(  ^{n+1}\mathcal{I}%
\right)  }\left(  ^{n+1}E;F\right)  $ and
\[
\left\Vert \gamma P\right\Vert _{\mathcal{P}_{\mathcal{L}\left(
^{n+1}\mathcal{I}\right)  }^{n+1}}\leq\left\Vert \gamma\right\Vert \left\Vert
P\right\Vert _{\mathcal{L}\left(  ^{n}\mathcal{I}\right)  }.
\]

\end{proposition}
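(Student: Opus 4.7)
The strategy is to mimic the proof of Proposition~\ref{FatoracaoCoerente1}: take a factorization of $\overset{\vee}{P}$ given by the definition of $\mathcal{L}(^n\mathcal{I})$, and adjoin $\gamma$ as an extra linear slot to upgrade it to an $(n+1)$-linear factorization whose associated polynomial is $\gamma P$. The key auxiliary fact is that any $\gamma\in E^{\ast}$ is a rank-one operator and hence belongs to $\mathcal{I}(E;\mathbb{K})$ with $\|\gamma\|_{\mathcal{I}}=\|\gamma\|$, a routine consequence of axioms (O2) and (O3).

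Concretely, given $\varepsilon>0$, I would pick $u_j\in\mathcal{I}(E;G_j)$ and $B\in\mathcal{L}(G_1,\ldots,G_n;F)$ with
\[
\overset{\vee}{P}(y_1,\ldots,y_n)=B(u_1(y_1),\ldots,u_n(y_n))
\]
and $\|B\|\prod_{j=1}^n\|u_j\|_{\mathcal{I}}\leq(1+\varepsilon)\|\overset{\vee}{P}\|_{\mathcal{L}(^n\mathcal{I})}$. Defining $\widetilde{B}\colon G_1\times\cdots\times G_n\times\mathbb{K}\to F$ by $\widetilde{B}(z_1,\ldots,z_n,\lambda):=\lambda B(z_1,\ldots,z_n)$, which has $\|\widetilde{B}\|=\|B\|$, I set
\[
A(x_1,\ldots,x_{n+1}):=\widetilde{B}(u_1(x_1),\ldots,u_n(x_n),\gamma(x_{n+1})).
\]
A direct computation gives $A(x,\ldots,x)=\gamma(x)P(x)=(\gamma P)(x)$, so $A$ is an $(n+1)$-linear mapping whose restriction to the diagonal equals $\gamma P$, and the displayed factorization certifies $A\in\mathcal{L}(^{n+1}\mathcal{I})$ with
\[
\|A\|_{\mathcal{L}(^{n+1}\mathcal{I})}\leq\|B\|\|\gamma\|\prod_{j=1}^n\|u_j\|_{\mathcal{I}}\leq(1+\varepsilon)\|\gamma\|\|\overset{\vee}{P}\|_{\mathcal{L}(^n\mathcal{I})}.
\]

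The remaining step is to pass from the asymmetric $A$ to the symmetric form $(\gamma P)^{\vee}$, since by Definition~\ref{or} the polynomial norm of $\gamma P$ is computed via $(\gamma P)^{\vee}$. Uniqueness of polarization identifies $(\gamma P)^{\vee}$ with the arithmetic mean of the permuted copies $A\circ\sigma$, and each such copy belongs to $\mathcal{L}(^{n+1}\mathcal{I})$ with the same norm as $A$ by axiom (Mb) applied with a reordering of identity operators. The expected obstacle is precisely this passage: since $\mathcal{L}(^{n+1}\mathcal{I})$ is only $\left(1/(n+1)\right)$-normed, a naive quasi-triangle inequality would introduce an $n$-dependent factor. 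To recover the constant-free bound of the statement I would exploit the symmetry of $\overset{\vee}{P}$ so that all permuted copies $A\circ\sigma$ admit factorizations built from exactly the same $\gamma$, $u_1,\ldots,u_n$ and $B$, and assemble them into a single $(n+1)$-linear factorization of $(\gamma P)^{\vee}$ whose norm is bounded by $\|\gamma\|\|\overset{\vee}{P}\|_{\mathcal{L}(^n\mathcal{I})}$; letting $\varepsilon\to0$ then yields the desired inequality.
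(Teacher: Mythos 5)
Your construction of the $(n+1)$-linear map $A=\gamma\overset{\vee}{P}$ --- adjoining $\gamma\in\mathcal{I}(E;\mathbb{K})$ with $\|\gamma\|_{\mathcal{I}}=\|\gamma\|$ as an extra factor and replacing $B$ by $\widetilde{B}(z_{1},\ldots,z_{n},\lambda)=\lambda B(z_{1},\ldots,z_{n})$ --- is exactly the paper's argument, including the observation $\|\widetilde{B}\|=\|B\|$ and the resulting bound $\|\gamma\overset{\vee}{P}\|_{\mathcal{L}\left(^{n+1}\mathcal{I}\right)}\leq\|\gamma\|\,\|B\|\prod_{j=1}^{n}\|u_{j}\|_{\mathcal{I}}$. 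Up to that point your proposal is complete and correct.

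The genuine gap is in the final step, which you yourself flag: passing from $\gamma\overset{\vee}{P}$ to the symmetric map $(\gamma P)^{\vee}=\frac{1}{n+1}\sum_{i=1}^{n+1}(\gamma\overset{\vee}{P})\circ\sigma_{i}$, whose $\mathcal{L}\left(^{n+1}\mathcal{I}\right)$-quasi-norm is what Definition \ref{or} requires you to estimate. You propose to ``assemble'' the $n+1$ permuted factorizations into a single factorization of $(\gamma P)^{\vee}$, but you never exhibit it, and the obvious candidate --- routing every variable through $u_{1}\oplus\cdots\oplus u_{n}\oplus\gamma$ and letting the outer multilinear map select components --- inflates the $\mathcal{I}$-norm of each of the $n+1$ legs additively, hence the product by a factor of order $(n+1)^{n+1}$, which is precisely the $n$-dependence you are trying to avoid. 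The paper does not attempt such an assembly: it writes down the polarization identity for $(\gamma P)^{\vee}$ and estimates $\|(\gamma P)^{\vee}\|_{\mathcal{L}\left(^{n+1}\mathcal{I}\right)}\leq\frac{1}{n+1}\bigl((n+1)\|\gamma\overset{\vee}{P}\|_{\mathcal{L}\left(^{n+1}\mathcal{I}\right)}\bigr)$, i.e.\ it averages the $n+1$ permuted copies (each with the same quasi-norm as $\gamma\overset{\vee}{P}$) as if the triangle inequality held with constant one in the $(1/(n+1))$-normed space. So your concern about the quasi-triangle inequality is well placed --- that one displayed inequality is where the paper absorbs it --- but as written your argument does not close this step, and the stated constant-free bound $\|\gamma P\|_{\mathcal{P}_{\mathcal{L}\left(^{n+1}\mathcal{I}\right)}^{n+1}}\leq\|\gamma\|\,\|P\|_{\mathcal{L}\left(^{n}\mathcal{I}\right)}$ is not yet established by it.
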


\begin{proof}
We can suppose $\left\Vert \gamma\right\Vert =1$. There exist Banach spaces
$G_{1},\ldots,G_{n}$, a multilinear mapping $B\in\mathcal{L}\left(
G_{1},\ldots,G_{n};F\right)  $, and linear operators $u_{j}\in$ $\mathcal{I}%
\left(  E;G_{j}\right)  $, $j=1,\ldots,n$\ such that
\[
\overset{\vee}{P}=B\circ\left(  u_{1},\ldots,u_{n}\right)  .
\]
Now, consider the mapping $\tilde{B}\in\mathcal{L}\left(  G_{1},\ldots
,G_{n},\mathbb{K};F\right)  $ defined by
\[
\tilde{B}\left(  y_{1},\ldots,y_{n},\gamma\left(  x\right)  \right)
=\gamma\left(  x\right)  B\left(  y_{1},\ldots,y_{n}\right)  .
\]
Observe that $\tilde{B}$ is well-defined, and that
\begin{align*}
\tilde{B}\left(  u_{1}\left(  x_{1}\right)  ,\ldots,u_{n}\left(  x_{n}\right)
,\gamma\left(  x_{n+1}\right)  \right)   &  =\gamma\left(  x_{n+1}\right)
B\left(  u_{1}\left(  x_{1}\right)  ,\ldots,u\left(  x_{n}\right)  \right) \\
&  =\gamma\left(  x_{n+1}\right)  \overset{\vee}{P}\left(  x_{1},\ldots
,x_{n}\right)  .
\end{align*}
Therefore,
\[
\gamma\overset{\vee}{P}\in\mathcal{L}\left(  ^{n+1}\mathcal{I}\right)  \left(
^{n+1}E;F\right)  .
\]
Since $\left\Vert \tilde{B}\right\Vert =\left\Vert B\right\Vert $ and%
\[
\left(  \gamma P\right)  ^{\vee}(x_{1},...,x_{n+1})=\frac{\gamma
(x_{1})\overset{\vee}{P}(x_{2},...,x_{n+1})+...+\gamma(x_{n+1})\overset{\vee
}{P}(x_{1},...,x_{n})}{n+1},
\]
we have
\[
\left\Vert \gamma P\right\Vert _{\mathcal{P}_{\mathcal{L}\left(
^{n+1}\mathcal{I}\right)  }^{n+1}}=\left\Vert \left(  \gamma P\right)  ^{\vee
}\right\Vert _{\mathcal{L}\left(  ^{n+1}\mathcal{I}\right)  }\leq\frac{1}%
{n+1}\left(  \left(  n+1\right)  \left\Vert \gamma\overset{\vee}{P}\right\Vert
_{\mathcal{L}\left(  ^{n+1}\mathcal{I}\right)  }\right)  \leq\left\Vert
B\right\Vert
%TCIMACRO{\tprod \limits_{j=1}^{n}}%
%BeginExpansion
{\textstyle\prod\limits_{j=1}^{n}}
%EndExpansion
\left\Vert u_{j}\right\Vert _{\mathcal{I}}%
\]
and the proof is completed$.$
\end{proof}

\begin{proposition}
\label{FatoracaoCoerente3} Let $k\in\left\{  1,\ldots,n+1\right\}  $. If
$T\in\mathcal{L}\left(  ^{n+1}\mathcal{I}\right)  \left(  E_{1},...,E_{n+1}%
;F\right)  $ and $a_{k}\in E_{k}$, then
\[
T_{a_{k}}\in\mathcal{L}\left(  ^{n}\mathcal{I}\right)  \left(  E_{1}%
,\ldots,E_{k-1},E_{k+1},\ldots,E_{n+1};F\right)
\]
and
\[
\left\Vert T_{a_{k}}\right\Vert _{\mathcal{L}\left(  ^{n}\mathcal{I}\right)
}\leq\left\Vert T\right\Vert _{\mathcal{L}\left(  ^{n+1}\mathcal{I}\right)
}\left\Vert a_{k}\right\Vert .
\]

\end{proposition}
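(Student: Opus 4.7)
The plan is to mimic the proof of Proposition~\ref{FatoracaoCoerente1} and exploit the factorisation definition of $\mathcal{L}(^{n+1}\mathcal{I})$ directly. Given $T\in\mathcal{L}(^{n+1}\mathcal{I})(E_{1},\ldots,E_{n+1};F)$, I would start from an arbitrary factorisation
\[
T(x_{1},\ldots,x_{n+1}) = B\bigl(u_{1}(x_{1}),\ldots,u_{n+1}(x_{n+1})\bigr),
\]
with $u_{j}\in\mathcal{I}(E_{j};G_{j})$ and $B\in\mathcal{L}(G_{1},\ldots,G_{n+1};F)$. Fixing the $k$-th slot at $a_{k}$, substitute $u_{k}(a_{k})$ into $B$ to obtain the $n$-linear mapping $B_{u_{k}(a_{k})}\in\mathcal{L}(G_{1},\ldots,G_{k-1},G_{k+1},\ldots,G_{n+1};F)$.

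The key identity is then
\[
T_{a_{k}}(x_{1},\ldots,x_{k-1},x_{k+1},\ldots,x_{n+1}) = B_{u_{k}(a_{k})}\bigl(u_{1}(x_{1}),\ldots,u_{k-1}(x_{k-1}),u_{k+1}(x_{k+1}),\ldots,u_{n+1}(x_{n+1})\bigr),
\]
which exhibits $T_{a_{k}}$ as an explicit factorisation through $n$ operators from $\mathcal{I}$, hence places it in $\mathcal{L}(^{n}\mathcal{I})(E_{1},\ldots,E_{k-1},E_{k+1},\ldots,E_{n+1};F)$.

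For the norm bound I would use $\|B_{u_{k}(a_{k})}\|\leq\|B\|\,\|u_{k}(a_{k})\|\leq\|B\|\,\|u_{k}\|\,\|a_{k}\|$ together with $\|u_{k}\|\leq\|u_{k}\|_{\mathcal{I}}$ (the defining inequality of a normed operator ideal), giving
\[
\|T_{a_{k}}\|_{\mathcal{L}(^{n}\mathcal{I})} \leq \|B_{u_{k}(a_{k})}\|\prod_{j\neq k}\|u_{j}\|_{\mathcal{I}} \leq \|a_{k}\|\,\|B\|\prod_{j=1}^{n+1}\|u_{j}\|_{\mathcal{I}}.
\]
Taking the infimum over all factorisations of $T$ yields the desired estimate $\|T_{a_{k}}\|_{\mathcal{L}(^{n}\mathcal{I})}\leq\|T\|_{\mathcal{L}(^{n+1}\mathcal{I})}\|a_{k}\|$.

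There is no real obstacle here; the statement is symmetric in $k$ (whereas Proposition~\ref{FatoracaoCoerente1} uses the specific polynomial structure), and the argument is essentially just a careful bookkeeping of the factorisation after freezing one slot. The only minor point worth noting is to be explicit about the step $\|u_{k}\|\leq\|u_{k}\|_{\mathcal{I}}$, so that the extra factor $\|u_{k}\|_{\mathcal{I}}$ can be absorbed into the product and the infimum can be computed without a spurious constant.
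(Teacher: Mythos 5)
Your argument is correct and is exactly the paper's proof: the authors write down the same identity $T_{a_{k}}=B_{u_{k}(a_{k})}\circ(u_{1},\ldots,u_{k-1},u_{k+1},\ldots,u_{n+1})$ and then refer to the norm estimate carried out in the earlier propositions, which is precisely the bound $\left\Vert B_{u_{k}(a_{k})}\right\Vert\leq\left\Vert B\right\Vert\left\Vert u_{k}\right\Vert_{\mathcal{I}}\left\Vert a_{k}\right\Vert$ followed by the infimum over factorisations that you spell out. No issues.
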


\begin{proof}
There are Banach spaces $G_{1},\ldots,G_{n+1}$, linear operators $u_{j}%
\in\mathcal{I}\left(  E_{j};G_{j}\right)  $, $j=1,\ldots,n+1$, and
$B\in\mathcal{L}\left(  G_{1},\ldots,G_{n+1};F\right)  $ such that
\[
T\left(  x_{1},\ldots,x_{n+1}\right)  =B\left(  u_{1}\left(  x_{1}\right)
,\ldots,u_{n+1}\left(  x_{n+1}\right)  \right)  .
\]
Hence,
\begin{align*}
&  T_{a_{k}}\left(  x_{1},\ldots,x_{k-1},x_{k+1},\ldots x_{n+1}\right) \\
&  =B_{u_{k}\left(  a_{k}\right)  }\left(  u_{1}\left(  x_{1}\right)
,\ldots,u_{k-1}\left(  x_{k-1}\right)  ,u_{k+1}\left(  x_{k+1}\right)
,\ldots,u_{n+1}\left(  x_{n+1}\right)  \right)
\end{align*}
and the proof follows the lines of the proof of Proposition
\ref{FatoracaoCoerente2}.
\end{proof}

The proof of the next proposition is essentially the same of Proposition
\ref{FatoracaoCoerente2}, and we omit:

\begin{proposition}
\label{FatoracaoCoerente4} If $T\in\mathcal{L}\left(  ^{n}\mathcal{I}\right)
\left(  E_{1},\ldots,E_{n};F\right)  $ and $\gamma\in E_{n+1}^{\ast}$, then
\[
\gamma T\in\mathcal{L}\left(  ^{n+1}\mathcal{I}\right)  \left(  E_{1}%
,...,E_{n+1};F\right)  \text{ and }\left\Vert \gamma T\right\Vert
_{\mathcal{L}\left(  ^{n+1}\mathcal{I}\right)  }\leq\left\Vert \gamma
\right\Vert \left\Vert T\right\Vert _{\mathcal{L}\left(  ^{n}\mathcal{I}%
\right)  }.
\]

\end{proposition}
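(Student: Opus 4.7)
The plan is to mirror the proof of Proposition~\ref{FatoracaoCoerente2}, using $\gamma$ itself as the extra ``factor'' that promotes an $n$-linear factorisation of $T$ into an $(n+1)$-linear one. Starting from a factorisation $T = B \circ (u_1, \ldots, u_n)$ with $u_j \in \mathcal{I}(E_j; G_j)$ and $B \in \mathcal{L}(G_1, \ldots, G_n; F)$, I would introduce $\tilde{B} \in \mathcal{L}(G_1, \ldots, G_n, \mathbb{K}; F)$ defined by $\tilde{B}(y_1, \ldots, y_n, \lambda) = \lambda B(y_1, \ldots, y_n)$; one checks immediately that $\|\tilde{B}\| = \|B\|$ and that
$$\gamma T = \tilde{B} \circ (u_1, \ldots, u_n, \gamma),$$
since $(\gamma T)(x_1, \ldots, x_{n+1}) = \gamma(x_{n+1}) B(u_1(x_1), \ldots, u_n(x_n))$.

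For this to genuinely display $\gamma T$ as an element of $\mathcal{L}(^{n+1}\mathcal{I})$, I need $\gamma \in \mathcal{I}(E_{n+1}; \mathbb{K})$ together with the estimate $\|\gamma\|_{\mathcal{I}} \leq \|\gamma\|$. Membership follows from (Oa), since $\gamma$ is a rank-one (hence finite-rank) operator. The norm bound is the standard fact that the ideal norm of a continuous linear functional coincides with its usual norm; I would justify it by writing $\gamma = P_1 \circ \gamma$ with $P_1 \colon \mathbb{K} \to \mathbb{K}$ the identity and invoking (O2)--(O3), or equivalently by factoring $\gamma$ through the one-dimensional quotient $E_{n+1}/\ker\gamma \cong \mathbb{K}$.

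Putting the pieces together, the definition of the factorisation norm yields
$$\|\gamma T\|_{\mathcal{L}(^{n+1}\mathcal{I})} \leq \|\tilde{B}\| \, \|u_1\|_{\mathcal{I}} \cdots \|u_n\|_{\mathcal{I}} \, \|\gamma\|_{\mathcal{I}} \leq \|\gamma\| \, \|B\| \prod_{j=1}^{n} \|u_j\|_{\mathcal{I}},$$
and taking the infimum over all factorisations of $T$ produces the desired inequality $\|\gamma T\|_{\mathcal{L}(^{n+1}\mathcal{I})} \leq \|\gamma\| \, \|T\|_{\mathcal{L}(^{n}\mathcal{I})}$. The only subtle point is the auxiliary estimate $\|\gamma\|_{\mathcal{I}} \leq \|\gamma\|$; the rest is a transparent adaptation of Proposition~\ref{FatoracaoCoerente2}, made slightly cleaner by the absence of the symmetrisation factor $1/(n+1)$ (here we work directly with multilinear maps rather than with symmetric multilinear maps associated to polynomials).
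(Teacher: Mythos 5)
Your proof is correct and is essentially the argument the paper has in mind: the paper omits this proof, noting it is "essentially the same" as that of Proposition~\ref{FatoracaoCoerente2}, which uses exactly your construction of $\tilde{B}(y_1,\ldots,y_n,\lambda)=\lambda B(y_1,\ldots,y_n)$ with $\gamma$ serving as the $(n+1)$-st factoring operator. Your explicit justification that $\gamma\in\mathcal{I}(E_{n+1};\mathbb{K})$ with $\|\gamma\|_{\mathcal{I}}\leq\|\gamma\|$ via (Oa), (O2) and (O3) is the one implicit detail the paper glosses over, and it is handled correctly.
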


From the previous results and Remark \ref{yv} we have:

\begin{theorem}
The sequence $\left(  \left(  \mathcal{P}_{\mathcal{L}\left(  ^{n}%
\mathcal{I}\right)  }^{n},\left\Vert .\right\Vert _{\mathcal{P}_{\mathcal{L}%
\left(  ^{n}\mathcal{I}\right)  }^{n}}\right)  ,\left(  \mathcal{L}\left(
^{n}\mathcal{I}\right)  ,\left\Vert .\right\Vert _{\mathcal{L}\left(
^{n}\mathcal{I}\right)  }\right)  \right)  _{n=1}^{\infty}$ is coherent and
compatible with the ideal $\mathcal{I}$.
\end{theorem}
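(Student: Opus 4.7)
The plan is to verify the five conditions (CH 1)--(CH 5) of Definition \ref{d2} with constants $\beta_1=\beta_2=\beta_3=\beta_4=1$, and then invoke Remark \ref{yv} to upgrade coherence to compatibility with $\mathcal{I}=\mathcal{U}_1$ at no additional cost. So the theorem is essentially a packaging of the four propositions that precede it, plus the definition of $\mathcal{P}_{\mathcal{M}}$.

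First I would note that (CH 5) holds trivially by the very construction of $\mathcal{P}^n_{\mathcal{L}(^n\mathcal{I})}$ in Definition \ref{or}: by definition, $P\in\mathcal{P}^n_{\mathcal{L}(^n\mathcal{I})}(^nE;F)$ if and only if $\overset{\vee}{P}\in\mathcal{L}(^n\mathcal{I})(^nE;F)$, and the two norms coincide on the nose. The remaining coherence conditions (CH 1), (CH 2), (CH 3), (CH 4) are \emph{exactly} the statements of Propositions \ref{FatoracaoCoerente3}, \ref{FatoracaoCoerente1}, \ref{FatoracaoCoerente4}, and \ref{FatoracaoCoerente2} respectively, each with right-hand constant equal to $1$. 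Coherence is therefore immediate.

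To pass to compatibility I would follow the route indicated in Remark \ref{yv}, iterating each coherence inequality $n-1$ times. For (CP 1), repeated application of (CH 1) with the $n-1$ fixed slots peels off the entries $a_j$ one at a time and lands $T_{a_1,\ldots,a_{k-1},a_{k+1},\ldots,a_n}$ in $\mathcal{L}(^1\mathcal{I})=\mathcal{I}$ with constant $\beta_1^{n-1}=1$. For (CP 2), since $(P_{a^{n-1}})^{\vee}(x)=\overset{\vee}{P}(a,\ldots,a,x)$ is linear, (CH 5) reduces the question to the multilinear estimate just obtained, and $\alpha_2=1$ follows. Conditions (CP 3) and (CP 4) are obtained in the same way by iterating (CH 3) and (CH 4) (these are manifestly additive in the number of scalar factors), yielding $\alpha_3=\alpha_4=1$, while (CP 5) is again built into Definition \ref{or}.

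There is no genuine obstacle: the delicate point, namely that the averaging factor $\tfrac{1}{n+1}$ appearing in the polarisation identity used in Proposition \ref{FatoracaoCoerente2} precisely absorbs the $n+1$ terms of $(\gamma P)^{\vee}$ and keeps the multiplicative constant equal to $1$, has already been handled. The only thing to watch is the bookkeeping in the iteration for (CP 2), where one must move between the polynomial norm on $\mathcal{U}_k$ and the multilinear norm on $\mathcal{M}_k$ through (CH 5); but since the norms are identified by Definition \ref{or}, this translation is free.
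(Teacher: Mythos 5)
Your proposal is correct and is essentially the paper's own argument: the authors likewise deduce the theorem directly from Propositions \ref{FatoracaoCoerente1}--\ref{FatoracaoCoerente4} (which give (CH\,1)--(CH\,4) with constants $1$), the construction in Definition \ref{or} (which gives (CH\,5)/(CP\,5) and the identification of norms), and Remark \ref{yv} to pass from coherence with unit constants to compatibility. Your extra bookkeeping on the iteration for (CP\,1)--(CP\,4) just makes explicit what the paper leaves to Remark \ref{yv}.
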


\section{Nonlinear variants of absolutely summing operators: a brief summary}

The class of absolutely $p$-summing linear operators is one of the most
successful examples of operator ideals, having its special space in several
textbooks related to Banach Space Theory (we refer the reader to \cite{AK,
Diestel, Fabian, LT, Ryan, Woy}). Its roots go back to the 1950s, when
Grothendieck still worked in Functional Analysis (see the famous
R\'{e}sum\'{e} \cite{Gro1953} and also \cite{grote, grothe3, grothe2, Gro1955}
for other papers of Grothendieck in Functional Analysis). The classical papers
of Lindenstrauss--Pe\l czy\'{n}ski \cite{LP} and Pietsch \cite{stu} were also
fundamental for the development of the theory.

As we have mentioned before, there are various possible multilinear and
polynomial approaches to the notion of absolutely summing operators; for this
reason we think that this is a nice context to test the notions of coherence
and compatibility of pairs previously defined.

For $1\leq p<\infty$, we denote by $\ell_{p}^{w}(E)$ the space composed by the
sequences $\left(  x_{j}\right)  _{j=1}^{\infty}$ in $E$ so that $\left(
\varphi\left(  x_{j}\right)  \right)  _{j=1}^{\infty}\in\ell_{p}$ for all
continuous linear functionals $\varphi:E\rightarrow\mathbb{K}$. The space
$\ell_{p}^{w}(E)$ is a Banach space when endowed with the norm $\Vert
\cdot\Vert_{w,p}$ given by
\[
\left\Vert \left(  x_{j}\right)  _{j=1}^{\infty}\right\Vert _{w,p}%
:=\sup_{\varphi\in B_{E^{\ast}}}\left\Vert \left(  \varphi\left(
x_{j}\right)  \right)  _{j=1}^{\infty}\right\Vert _{p}.
\]
The subspace of $\ell_{p}^{w}(E)$ of all sequences $\left(  x_{j}\right)
_{j=1}^{\infty}\in\ell_{p}^{w}\left(  E\right)  $ such that $\lim
_{m\rightarrow\infty}\left\Vert \left(  x_{j}\right)  _{j=m}^{\infty
}\right\Vert _{w,p}=0$ is denoted by $\ell_{p}^{u}\left(  E\right)  $. If
$1\leq q\leq p<\infty$ a continuous linear operator $u:E\rightarrow F$ is
absolutely $(p,q)$-summing if there is a constant $C\geq0$ such that%
\[
\left(
%TCIMACRO{\dsum \limits_{j=1}^{\infty}}%
%BeginExpansion
{\displaystyle\sum\limits_{j=1}^{\infty}}
%EndExpansion
\left\Vert u(x_{j})\right\Vert ^{p}\right)  ^{1/p}\leq C\left\Vert
(x_{j})_{j=1}^{\infty}\right\Vert _{w,q}%
\]
for all $(x_{j})_{j=1}^{\infty}\in\ell_{q}^{u}(E)$. We denote the set of all
absolutely $(p,q)$-summing operators by $\Pi_{p,q}$ ($\Pi_{p}$ if $p=q$) and
the space of all absolutely $(p,q)$-summing operators from $E$ to $F$ by
$\Pi_{p,q}\left(  E,F\right)  .$ The infimum of all $C$ that satisfy the
inequality above defines a norm on $\Pi_{p,q}\left(  E,F\right)  $, denoted by
$\left\Vert .\right\Vert _{as(p,q)}$ (or $\left\Vert .\right\Vert _{as,p}$ if
$p=q$) and $\left(  \Pi_{p,q},\left\Vert .\right\Vert _{as(p,q)}\right)  $ is
a Banach operator ideal. The notion of absolutely summing operators is due to
Pietsch \cite{stu}. For a complete panorama on the theory of absolutely
summing operators we refer the reader to the classical monograph
\cite{Diestel} and classical papers \cite{ben2, Dies, DPR,LP, stu}; for recent
developments we refer the reader to \cite{PellZ, kit, ku, ku2, pu} and
references therein.

The adequate extension of the linear theory of absolutely summing operators to
the multilinear setting is a complicated matter; there are different
approaches and different lines of investigation.

Historically, in some sense, the multilinear theory of absolutely summing
mappings seems to have its starting point in 1930, with Littlewood's $4/3$
Theorem \cite{LLL} and, one year later, with the Bohnenblust--Hille Theorem
\cite{BH}. The Bohnenblust--Hille Theorem was overlooked for a long time and
only in the the 80's the interest in the multilinear theory related to
absolutely summing operators was recovered, motivated by A. Pietsch's work
\cite{PPPP}. In the present paper we deal with some of the most usual
polynomial and multilinear extensions of $\Pi_{p}$ (strongly summing
multilinear operators, multiple summing multilinear operators, absolutely
summing multilinear operators, dominated multilinear operators, everywhere
absolutely summing multilinear operators and their polynomial versions). For
more details concerning the nonlinear theory of absolutely summing operators
and recent developments and applications we refer the reader to \cite{BBJP,
BPR1, cg, Def2, dimant, Matos-N, Studia, phi} and references therein.

A polynomial $P\in\mathcal{P}_{n}(^{n}E;F)$ is $(p;q)$-summing at $a\in E$ if
$\left(  P(a+x_{j})-P(a)\right)  _{j=1}^{\infty}\in\ell_{p}(F)$ for all
$(x_{j})_{j=1}^{\infty}\in\ell_{q}^{u}(E)$. If $1\leq q\leq p<\infty,$ the
space composed by the $n$-homogeneous polynomials from $E$ to $F$ that are
$(p;q)$-summing at every point is denoted by $\mathcal{P}_{as(p;q)}%
^{n,ev}(^{n}E;F).$ The polynomials in $\mathcal{P}_{as(p;q)}^{n,ev}$ are
called everywhere absolutely summing.

M.C. Matos \cite{Matos-N} defined a norm on the space $\mathcal{P}%
_{as(p;q)}^{n,ev}(^{n}E;F)$ of everywhere $(p;q)$-summing polynomials by
considering the polynomial $\Psi_{p;q}(P)\colon\ell_{q}^{u}(E)\longrightarrow
\ell_{p}(F)$ given by
\[
(x_{j})_{j=1}^{\infty}\longmapsto(P(x_{1}),(P(x_{1}+x_{j})-P(x_{1}%
))_{j=2}^{\infty})
\]
and showing that the correspondence $P\longrightarrow\Vert\Psi_{p;q}(P)\Vert$
defines a norm on $\mathcal{P}_{as(p;q)}^{n,ev}(^{n}E;F)$. From now on this
norm is denoted by $\left\Vert \cdot\right\Vert _{ev^{(1)}(p;q)}$. Matos also
proved that this norm is complete and that $(\mathcal{P}_{as(p;q)}%
^{n,ev},\left\Vert \cdot\right\Vert _{ev^{(1)}(p;q)})$ is a global holomorphy
type. From \cite{Scand} it is known that
\[
\lim_{n\rightarrow\infty}\Vert P_{n}:\mathbb{K}\rightarrow\mathbb{K};\text{
}P_{n}\left(  \lambda\right)  =\lambda^{n}\Vert_{ev^{(1)}(p;q)}=\infty
\]
and this estimate will allow us to conclude that $(\mathcal{P}_{as(p;q)}%
^{n,ev},\left\Vert \cdot\right\Vert _{ev^{(1)}(p;q)})_{n=1}^{\infty}$ is
\textquotedblleft compatible, in the sense of Carando \textit{et
al.}\textquotedblright\ with no operator ideal; here we have used the term
\textquotedblleft compatible , in the sense of Carando \textit{et
al.}\textquotedblright\ in a more general form, since the sequence
$(\mathcal{P}_{as(p;q)}^{n,ev},\left\Vert \cdot\right\Vert _{ev^{(1)}%
(p;q)})_{n=1}^{\infty}$ is not exactly a normed polynomial ideal (since it
fails (P2)), but just a global holomorphy type.

\begin{proposition}
If $p\geq q>1$ and $n\geq2$, then $(\mathcal{P}_{as(p;q)}^{n,ev},\left\Vert
\cdot\right\Vert _{ev^{(1)}(p;q)})$ is \textquotedblleft
compatible\textquotedblright\ with no operator ideal.
\end{proposition}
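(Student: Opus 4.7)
The plan is to argue by contradiction, reducing the whole matter to the scalar case. I would first suppose that there is an operator ideal $\mathcal{U}$ with which $(\mathcal{P}_{as(p;q)}^{n,ev},\|\cdot\|_{ev^{(1)}(p;q)})$ is ``compatible'' in the generalized sense discussed just before the proposition, that is, conditions formally analogous to (i)--(ii) of Definition~\ref{IdeaisCompativeis} hold with constants $\alpha_{1},\alpha_{2}>0$ that do not depend on $n$. The independence of $n$ is essential here: it is what rescues the notion once (P2) has to be dropped, and it expresses the fact that the whole family is being tested simultaneously against $\mathcal{U}$.

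Next I would specialize condition (ii) to the one-dimensional test case $E=F=\mathbb{K}$, $u=\operatorname{id}_{\mathbb{K}}$ and $\gamma=\operatorname{id}_{\mathbb{K}}\in\mathbb{K}^{\ast}$. Axiom (O2) of an operator ideal gives $\|u\|_{\mathcal{U}}=1$, while plainly $\|\gamma\|=1$, and a direct inspection shows $\gamma^{n-1}u=P_{n}$, where $P_{n}(\lambda)=\lambda^{n}$ is the canonical polynomial recalled just before the statement. The (generalized) compatibility inequality then collapses to the one-line bound
\[
\|P_{n}\|_{ev^{(1)}(p;q)}\leq\alpha_{2}\|\gamma\|^{n-1}\|u\|_{\mathcal{U}}=\alpha_{2},
\]
which must hold with the \emph{same} $\alpha_{2}$ for every $n\geq 2$.

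This forces a uniform upper bound on $\|P_{n}\|_{ev^{(1)}(p;q)}$, directly contradicting the estimate $\lim_{n\to\infty}\|P_{n}\|_{ev^{(1)}(p;q)}=\infty$ from \cite{Scand} that is recalled immediately before the proposition. Hence no such $\mathcal{U}$ can exist. The main obstacle is, as flagged above, not technical but interpretational: one must be explicit that in the generalized notion of compatibility (with (P2) suppressed) the constants continue to be required to be independent of $n$, since otherwise one could trivially absorb $\|P_{n}\|_{ev^{(1)}(p;q)}$ into an $n$-dependent $\alpha_{2}(n)$ and the statement would be vacuous. Granting the correct reading, the proof reduces to (O2), the identification $\gamma^{n-1}u=P_{n}$, and the cited divergence, so there are no real computations to perform.
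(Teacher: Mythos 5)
Your argument is essentially the paper's own proof: the authors likewise take $u=\gamma=P_{1}$ (the identity on $\mathbb{K}$), identify $\gamma^{n-1}u$ with the monomial $P_{n}(\lambda)=\lambda^{n}$, and play the divergence $\lim_{n\rightarrow\infty}\Vert P_{n}\Vert_{ev^{(1)}(p;q)}=\infty$ from \cite[Proposition 4.4]{Scand} against the boundedness of $\Vert\gamma\Vert^{n-1}\Vert u\Vert_{\mathcal{I}}$. Your explicit insistence that the compatibility constant $\alpha_{2}$ be uniform in $n$ (so that a single finite value of $\Vert P_{n}\Vert_{ev^{(1)}(p;q)}$ for fixed $n$ cannot be absorbed into an $n$-dependent constant) makes precise a point the paper leaves implicit, but the substance of the two arguments is the same.
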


\begin{proof}
Given $n\in\mathbb{N}$, let $P_{n}\colon\mathbb{K}\longrightarrow\mathbb{K}$
denote the trivial $n$-homogeneous polynomial given by $P_{n}(\lambda
)=\lambda^{n}$. From \cite[Proposition 4.4]{Scand} it is also known that
\[
\lim_{n\rightarrow\infty}\Vert P_{n}\Vert_{ev^{(1)}(p;q)}=\infty
\]
for all $p\geq q>1$. So, by considering $u=\gamma=P_{1},$ we conclude that
$\gamma^{n-1}u$ belongs to $\mathcal{P}_{as(p;q)}^{n,ev}(^{n}E;\mathbb{K})$
and
\[
\lim_{n\rightarrow\infty}\left\Vert \gamma^{n-1}u\right\Vert _{ev^{(1)}%
(p;q)}=\lim_{n\rightarrow\infty}\Vert P_{n}\Vert_{ev^{(1)}(p;q)}=\infty
\]
and, on the other hand, for every operator ideal $\mathcal{I},$ we have%
\[
\left\Vert \gamma\right\Vert ^{n-1}\left\Vert u\right\Vert _{\mathcal{I}%
}<\infty.
\]
We thus conclude that $(\mathcal{P}_{as(p;q)}^{n,ev},\left\Vert \cdot
\right\Vert _{ev^{(1)}(p;q)})$ is compatible with no operator ideal.
\end{proof}

In the result above the non-compatibility is a fault of the norm and by
considering the similar concept for multilinear mappings, the respective pair
of ideals will also fail to be compatible with respect to our new approach.
The situation will be different when considering a new norm in the next section.

From now on we will adopt the classical notation for the spaces of continuous
$n$-homogeneous polynomials and continuous $n$-linear mappings. More
precisely, we will write $\mathcal{P}\left(  ^{n}E;F\right)  $ and
$\mathcal{L}\left(  E_{1},..,E_{n};F\right)  $ instead of $\mathcal{P}%
_{n}\left(  ^{n}E;F\right)  $ and $\mathcal{L}_{n}\left(  E_{1},..,E_{n}%
;F\right)  ,$ respectively. When $E_{1}=\cdots=E_{n}$ we will write
$\mathcal{L}\left(  ^{n}E;F\right)  .$

\section{Everywhere absolutely summing multilinear operators and polynomials}

One of the first multilinear generalisations of the ideal of absolutely
summing operators (see \cite{AM}) is the following: If $0<p,q<\infty$ and
$p\geq nq$ an $n$-homogeneous polynomial $P\in\mathcal{P}(^{n}E;F)$ is
absolutely $(p;q)$-summing if there exists $C$ such that%
\begin{equation}
\left(  \sum_{j=1}^{\infty}\left\Vert P(x_{j})\right\Vert ^{p}\right)
^{1/p}\leq C\left\Vert (x_{j})_{j=1}^{\infty})\right\Vert _{w,q}^{n}
\label{pmp}%
\end{equation}
for all $(x_{j})_{j=1}^{\infty}\in\ell_{q}^{u}(E)$. The infimum of all $C$ for
which (\ref{pmp}) holds defines a complete norm (if $p\geq1$), defined by
$\left\Vert .\right\Vert _{as(p;q)},$ on $\mathcal{P}_{as(p;q)}^{n}(^{n}E;F)$.
If $p=q$ we write $\mathcal{P}_{as,p}^{n}$ instead of $\mathcal{P}%
_{as(p;q)}^{n}.$

It is not difficult to show that the definition above is equivalent to saying
that $\left(  P(x_{j})\right)  _{j=1}^{\infty}\in\ell_{p}(F)$ for all
$(x_{j})_{j=1}^{\infty}\in\ell_{q}^{u}(E)$. This ideal, however, is not closed
under differentiation and is not a (global) holomorphy type. Besides, in this
case the spirit of the linear ideal is also destroyed by several coincidence
theorems, which have no relation with the linear case. For example, using that
$\ell_{p}$ (for $1\leq p\leq2$) has cotype $2$ it is easy to show that
\begin{equation}
\mathcal{P}_{as,1}^{n}(^{n}\ell_{p};F)=\mathcal{P}(^{n}\ell_{p};F) \label{kio}%
\end{equation}
for all $p\in\lbrack1,2],$ $n\geq2$ and all $F$; these results are far from
being true for $n=1$. So it should be expected that $\left(  \mathcal{P}%
_{as,1}^{n},\left\Vert .\right\Vert _{as,1}\right)  $ is not classified as
compatible with the ideal $\Pi_{1}.$ Similar defects can be found in this
ideal for the general case of $\mathcal{P}_{as(p,q)}^{n}.$ A similar concept
of absolutely summability exists for $n$-linear operators:

An $n$-linear operator $T\in\mathcal{L}(E_{1},\ldots,E_{n};F)$ is absolutely
$(p;q)$-summing (with $p\geq nq$) if there exists $C\geq0$ such that
\begin{equation}
\left(  \sum_{j=1}^{\infty}\left\Vert T(x_{j}^{(1)},\ldots,x_{j}%
^{(n)})\right\Vert ^{p}\right)  ^{1/p}\leq C%
%TCIMACRO{\dprod \limits_{k=1}^{n}}%
%BeginExpansion
{\displaystyle\prod\limits_{k=1}^{n}}
%EndExpansion
\left\Vert (x_{j}^{(k)})_{j=1}^{\infty}\right\Vert _{w,q}\nonumber
\end{equation}
for all $(x_{j}^{(k)})_{j=1}^{\infty}\in\ell_{q}^{u}(E_{k}),\,k=1,\ldots,n$.
Moreover, the infimum of all $C$ for which the inequality holds defines a
complete norm (if $p\geq1$)$,$ denoted by $\left\Vert .\right\Vert
_{as(p;q)},$ for this class. This definition is equivalent to saying that
$\left(  T(x_{j}^{(1)},\ldots,x_{j}^{(n)})\right)  _{j=1}^{\infty}$ belongs to
$\ell_{p}(F)$ for all $(x_{j}^{(k)})_{j=1}^{\infty}\in\ell_{q}^{u}(E_{k}).$

This class, denoted by $\mathcal{L}_{as(p,q)}^{n},$ forms a Banach multi-ideal
but defects similar to those of $\mathcal{P}_{as(p,q)}^{n}$ can be easily
found. So we easily have:

\begin{example}
The sequence $\left(  \left(  \mathcal{P}_{as,1}^{n},\left\Vert .\right\Vert
_{as,1}\right)  ,\left(  \mathcal{L}_{as,1}^{n},\left\Vert .\right\Vert
_{as,1}\right)  \right)  _{n=1}^{\infty}$ is not coherent and not compatible
with $\Pi_{1}$.
\end{example}

The main problems of the classes above disappear when we slightly modify their
definitions, as we see in the next definition:

\begin{definition}
Let $1\leq q\leq p<\infty.$ An $n$-linear operator $T\in\mathcal{L}%
(E_{1},\ldots,E_{n};F)$ is everywhere absolutely $(p;q)$-summing (notation
$\mathcal{L}_{as(p;q)}^{n,ev}(E_{1},\ldots,E_{n};F)$) if there exists $C\geq0$
such that
\begin{equation}
\left(  \sum_{j=1}^{\infty}\left\Vert T(a_{1}+x_{j}^{(1)},\ldots,a_{n}%
+x_{j}^{(n)})-T(a_{1},\ldots,a_{n})\right\Vert ^{p}\right)  ^{1/p}\leq C%
%TCIMACRO{\dprod \limits_{k=1}^{n}}%
%BeginExpansion
{\displaystyle\prod\limits_{k=1}^{n}}
%EndExpansion
\left(  \left\Vert a_{k}\right\Vert +\left\Vert (x_{j}^{(k)})_{j=1}^{\infty
})\right\Vert _{w,q}\right) \nonumber
\end{equation}
for all $(a_{1},\ldots,a_{n})\in E_{1}\times\cdots\times E_{n}$ and
$(x_{j}^{(k)})_{j=1}^{\infty}\in\ell_{q}^{u}(E_{k}),\,k=1,\ldots,n$.
\ Moreover, the infimum of all $C$ for which the inequality holds defines a
complete norm on $\mathcal{L}_{as(p;q)}^{n,ev}$ denoted by $\left\Vert
\cdot\right\Vert _{ev^{(2)}(p;q)}$.
\end{definition}

The definition above is justified by the following result \cite[Theorem
4.1]{Scand}:

\begin{theorem}
The following assertions are equivalent for $T\in\mathcal{L}(E_{1}%
,\ldots,E_{n};F)$:

\textrm{(a)} $T\in\mathcal{L}_{as(p;q)}^{n,ev}(E_{1},\ldots,E_{n};F).$

\textrm{(b)} The sequence $\left(  T(a_{1}+x_{j}^{(1)},...,a_{n}+x_{j}%
^{(n)})-T(a_{1},...,a_{n})\right)  _{j=1}^{\infty}\in$ $\ell_{p}(F)$ for all
$(x_{j}^{(k)})_{j=1}^{\infty}\in\ell_{p}^{u}(E_{k})$ and all $(a_{1}%
,...,a_{n})\in E_{1}\times\cdots\times E_{n}$.
\end{theorem}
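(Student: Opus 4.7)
The direction (a) $\Rightarrow$ (b) is immediate from the defining inequality of $\mathcal{L}_{as(p;q)}^{n,ev}$: with $C = \|T\|_{ev^{(2)}(p;q)}$, the sequence in (b) already has $\ell_p$-norm bounded by $C \prod_k (\|a_k\| + \|(x_j^{(k)})_j\|_{w,q})$, so it certainly lies in $\ell_p(F)$.

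For the substantive direction (b) $\Rightarrow$ (a), the plan is to package the hypothesis as the well-definedness of a single multilinear map into $\ell_p(F)$ and then deduce the required norm inequality from the closed graph theorem together with the classical fact that a separately continuous multilinear map between Banach spaces is jointly continuous. Concretely, I equip each $E_k \oplus \ell_q^u(E_k)$ with the norm $\|(a,x)\| := \|a\| + \|(x_j)_j\|_{w,q}$ and introduce
\[
\Phi\bigl((a_1, x^{(1)}), \ldots, (a_n, x^{(n)})\bigr)_j := T(a_1 + x_j^{(1)}, \ldots, a_n + x_j^{(n)}) - T(a_1, \ldots, a_n),
\]
which by hypothesis (b) belongs to $\ell_p(F)$. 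A short expansion using multilinearity of $T$ and the cancellation of the constant term $T(a_1,\ldots,a_n)$ shows that $\Phi$ is linear in each variable $(a_k, x^{(k)})$; this algebraic multilinearity is the key observation that makes the whole reduction work.

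It then remains to establish separate continuity. Fixing all but the $k$-th slot gives a linear map $\Phi_k \colon E_k \oplus \ell_q^u(E_k) \to \ell_p(F)$, and I will verify that its graph is closed. Suppose $(a_k^m, x^{(k),m}) \to (a_k, x^{(k)})$ in norm and $\Phi_k(a_k^m, x^{(k),m}) \to y$ in $\ell_p(F)$. Convergence in $\ell_q^u(E_k)$ forces $x_j^{(k),m} \to x_j^{(k)}$ weakly in $E_k$ for each fixed $j$, via the estimate $|\varphi(x_j^{(k),m} - x_j^{(k)})| \leq \|(x_j^{(k),m} - x_j^{(k)})_j\|_{w,q}$ valid for every $\varphi \in E_k^{\ast}$. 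Since $T$ is weak-to-weak continuous in its $k$-th slot with the others fixed, each coordinate of $\Phi_k(a_k^m, x^{(k),m})$ converges weakly in $F$ to the corresponding coordinate of $\Phi_k(a_k, x^{(k)})$; combined with the $\ell_p(F)$-convergence to $y$ (which yields coordinatewise norm convergence) and the uniqueness of weak limits, this identifies $y$ with $\Phi_k(a_k, x^{(k)})$. Separate continuity of $\Phi$ then upgrades to joint continuity by the usual Banach--Steinhaus argument, producing a constant $C$ with $\|\Phi((a_1,x^{(1)}),\ldots,(a_n,x^{(n)}))\|_{\ell_p(F)} \leq C \prod_k (\|a_k\| + \|(x_j^{(k)})_j\|_{w,q})$, which is exactly the inequality defining $\mathcal{L}_{as(p;q)}^{n,ev}$. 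The main obstacle is precisely this closed graph step: convergence in the $\|\cdot\|_{w,q}$-norm gives only weak convergence of individual coordinates, so one has to lean on the weak-to-weak continuity of $T$ slot by slot; once that is handled, the rest is a mechanical consequence of the multilinear structure built into $\Phi$.
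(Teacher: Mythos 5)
Your proof is correct. The paper does not actually prove this theorem --- it is imported verbatim from \cite{Scand} --- so there is no in-paper argument to compare against line by line, but your linearization strategy is precisely the standard device of that literature (compare the operator $\Psi_{p;q}$ that Matos uses to norm the everywhere summing polynomials). The only point that genuinely needs verification is your claim that $\Phi$ is $n$-linear in the pair variables, and it checks out: expanding by multilinearity, $T(a_1+x_j^{(1)},\ldots,a_n+x_j^{(n)})-T(a_1,\ldots,a_n)=\sum_{\emptyset\neq S\subseteq\{1,\ldots,n\}}T(z_1^S,\ldots,z_n^S)$, where $z_k^S=x_j^{(k)}$ if $k\in S$ and $z_k^S=a_k$ otherwise, and every summand is linear in each pair $(a_k,x^{(k)})$ since in each slot it is $T$ precomposed with one of the two coordinate projections of $E_k\oplus\ell_q^u(E_k)$. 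Your closed-graph step is also fine, and in fact simpler than you make it: $\Vert x_{j_0}^{(k),m}-x_{j_0}^{(k)}\Vert\le\Vert(x_j^{(k),m}-x_j^{(k)})_j\Vert_{w,q}$, so convergence in $\ell_q^u(E_k)$ already forces norm convergence of each coordinate, and ordinary (norm) continuity of $T$ in each slot identifies the limit; the detour through weak-to-weak continuity is unnecessary. Two minor remarks: the $\ell_p^u(E_k)$ in item (b) should be read as $\ell_q^u(E_k)$, consistently with the definition of the class (you implicitly make this correction), and the separate-to-joint continuity upgrade for multilinear maps between Banach spaces is the classical Baire/uniform-boundedness argument, which delivers exactly the constant $C$ in the defining inequality of $\mathcal{L}_{as(p;q)}^{n,ev}$.
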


For polynomials the definition and characterisation are similar (and
equivalent, modulo norms), to the definition presented in Section 3:

\begin{definition}
Let $1\leq q\leq p<\infty.$ A polynomial $P\in\mathcal{P}(^{n}E;F)$ is
everywhere absolutely $(p;q)$-summing (notation $\mathcal{P}_{as(p;q)}%
^{n,ev}(^{n}E;F)$) if there exists $C\geq0$ such that%
\[
\left(  \sum_{j=1}^{\infty}\left\Vert P(a+x_{j})-P(a)\right\Vert ^{p}\right)
^{1/p}\leq C\left(  \left\Vert a\right\Vert +\left\Vert (x_{j})_{j=1}^{\infty
})\right\Vert _{w,q}\right)  ^{n}%
\]
for all $a\in E$ and $(x_{j})_{j=1}^{\infty}\in\ell_{q}^{u}(E)$. Moreover, the
infimum of all $C$ for which the inequality holds defines a complete norm on
$\mathcal{P}_{as(p;q)}^{n,ev}(^{n}E;F)$ denoted by $\left\Vert .\right\Vert
_{ev^{(2)}(p;q)}$.
\end{definition}

As in the case of multilinear operators, the following characterisation holds
\cite[Theorem 4.2]{Scand}:

\begin{theorem}
The following assertions are equivalent for $P\in\mathcal{P}(^{n}E;F)$:

\textrm{(a)} $P\in\mathcal{P}_{as(p;q)}^{n,ev}(^{n}E;F).$

\textrm{(b)} The sequence $\left(  P(a+x_{j})-P(a)\right)  _{j=1}^{\infty}%
\in\ell_{p}(F)$ for all $(x_{j})_{j=1}^{\infty}\in\ell_{p}^{u}(E)$ and all
$a\in E$.
\end{theorem}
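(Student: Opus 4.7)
The equivalence has one trivial direction and one substantive one. For (a) $\Rightarrow$ (b) the defining inequality of $\mathcal{P}_{as(p;q)}^{n,ev}$ directly bounds the $\ell_{p}(F)$-norm of $(P(a+x_{j})-P(a))_{j}$ by $C(\|a\|+\|(x_{j})\|_{w,q})^{n}$, which is finite for every $a\in E$ and every $(x_{j})\in\ell_{q}^{u}(E)$; here I read the $\ell_{p}^{u}(E)$ appearing in (b) as $\ell_{q}^{u}(E)$, in line with the definition and with the parallel multilinear statement. The content lies in (b) $\Rightarrow$ (a), which I would obtain from a closed-graph argument applied to an auxiliary polynomial.

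Equip the Banach space $X:=E\oplus_{\infty}\ell_{q}^{u}(E)$ with the maximum norm and define
\[
\Phi\colon X\longrightarrow\ell_{p}(F),\qquad \Phi(a,(x_{j})):=\bigl(P(a+x_{j})-P(a)\bigr)_{j=1}^{\infty}.
\]
Hypothesis (b) is exactly the assertion that $\Phi$ is everywhere defined. The multinomial identity
\[
P(a+x)-P(a)=\sum_{k=1}^{n}\binom{n}{k}\overset{\vee}{P}(a^{n-k},x^{k})
\]
writes $\Phi=\sum_{k=1}^{n}\binom{n}{k}\Phi_{k}$, where $\Phi_{k}(a,(x_{j}))=\bigl(\overset{\vee}{P}(a^{n-k},x_{j}^{k})\bigr)_{j}$ and each summand satisfies $\Phi_{k}(\lambda a,\lambda(x_{j}))=\lambda^{n}\Phi_{k}(a,(x_{j}))$; hence $\Phi$ is an $n$-homogeneous polynomial from $X$ to $\ell_{p}(F)$.

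Next I would verify that $\Phi$ has closed graph. If $(a_{m},\mathbf{x}^{(m)})\to(a,\mathbf{x})$ in $X$ and $\Phi(a_{m},\mathbf{x}^{(m)})\to y$ in $\ell_{p}(F)$, then for every fixed $j$ one has $\|x_{j}^{(m)}-x_{j}\|\leq\|\mathbf{x}^{(m)}-\mathbf{x}\|_{w,q}\to 0$, so continuity of $P$ gives $P(a_{m}+x_{j}^{(m)})-P(a_{m})\to P(a+x_{j})-P(a)$ in $F$; on the other hand the $j$-th coordinate of $\Phi(a_{m},\mathbf{x}^{(m)})$ converges to $y_{j}$. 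This forces $y=\Phi(a,\mathbf{x})$.

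The proof is then completed by invoking the closed-graph theorem for $n$-homogeneous polynomials between Banach spaces: $\Phi$ is continuous, so there exists $C>0$ with
\[
\|\Phi(a,(x_{j}))\|_{p}\leq C\max\bigl(\|a\|,\|(x_{j})\|_{w,q}\bigr)^{n}\leq C\bigl(\|a\|+\|(x_{j})\|_{w,q}\bigr)^{n},
\]
which is precisely the defining inequality of $\mathcal{P}_{as(p;q)}^{n,ev}$. The main obstacle I anticipate is justifying this closed-graph statement in the polynomial (nonlinear) setting; the standard route is polarization, which transfers the problem to the symmetric $n$-linear form $\overset{\vee}{\Phi}$ on $X^{n}$, whose graph is closed because the polarization formula expresses $\overset{\vee}{\Phi}$ as a fixed finite linear combination of values of $\Phi$, after which the usual closed-graph theorem for multilinear maps between Banach spaces (obtained by iterating the linear version in each variable) yields continuity of $\overset{\vee}{\Phi}$, and hence of $\Phi$.
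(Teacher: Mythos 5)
First, a point of reference: the paper does not prove this theorem at all --- it is quoted from \cite[Theorem 4.2]{Scand} --- so there is no in-paper argument to compare against. Your reading of the $\ell_{p}^{u}(E)$ in (b) as $\ell_{q}^{u}(E)$ is the right one (otherwise (a)$\Rightarrow$(b) would not even follow from the definition), and your overall strategy --- encode (b) as the statement that an auxiliary degree-$n$ homogeneous map $\Phi$ on $E\oplus_{\infty}\ell_{q}^{u}(E)$ takes values in $\ell_{p}(F)$, then upgrade this qualitative statement to the quantitative inequality of (a) by a closed-graph/uniform-boundedness argument --- is sound and in the spirit of how such characterisations are proved in the literature.

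There is, however, one step whose justification as written does not work: you assert that the graph of $\overset{\vee}{\Phi}$ is closed \emph{because} polarization writes $\overset{\vee}{\Phi}$ as a fixed finite linear combination of values of $\Phi$ and $\Phi$ has closed graph. Closedness of a graph does not pass through finite linear combinations: if $(v_{1}^{(m)},\dots,v_{n}^{(m)})\to(v_{1},\dots,v_{n})$ and the sum $\sum_{\varepsilon}\varepsilon_{1}\cdots\varepsilon_{n}\Phi\bigl(\sum_{i}\varepsilon_{i}v_{i}^{(m)}\bigr)$ converges, nothing forces the individual terms to converge, so the closed-graph hypothesis on $\Phi$ cannot be invoked for them. (Relatedly, degree-$n$ homogeneity of $\Phi$ alone does not make it a polynomial; you must exhibit an associated $n$-linear map with values in $\ell_{p}(F)$, and it is precisely hypothesis (b), applied to the points $\sum_{i}\varepsilon_{i}v_{i}$, that guarantees the polarization combination lands in $\ell_{p}(F)$.) Both issues are repaired by the observation you already used for $\Phi$ itself: since $\|x_{j}\|\leq\|(x_{i})_{i}\|_{w,q}$ for each fixed $j$ and $P$, $\overset{\vee}{P}$ are continuous, every coordinate of $\overset{\vee}{\Phi}(v_{1},\dots,v_{n})$ depends norm-continuously on $(v_{1},\dots,v_{n})$, so $\overset{\vee}{\Phi}$ is continuous into $F^{\mathbb{N}}$ with the product topology; as $\ell_{p}(F)$-convergence implies coordinatewise convergence, the graph of $\overset{\vee}{\Phi}$ is closed by the same two-line argument you gave for $\Phi$. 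From there your chain (each partial linear map inherits a closed graph, hence is continuous; separately continuous multilinear maps between Banach spaces are jointly continuous; restrict to the diagonal) is correct and yields $\|\Phi(a,(x_{j}))\|_{p}\leq C\max(\|a\|,\|(x_{j})\|_{w,q})^{n}$, which is (a). So the proof is salvageable, but the closed-graph transfer to $\overset{\vee}{\Phi}$, as you justified it, is a genuine gap that must be filled by the direct coordinatewise argument rather than by polarization.
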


It was proved in \cite[Proposition 4.3]{Scand} that
\[
\Vert A_{n}\colon\mathbb{K}^{n}\longrightarrow\mathbb{K}:A_{n}(\lambda
_{1},\ldots,\lambda_{n})=\lambda_{1}\cdots\lambda_{n}\Vert_{ev^{(2)}(p;q)}=1
\]
for all $p\geq q\geq1.$ So, it is not difficult to show that $(\mathcal{L}%
_{as(p;q)}^{n,ev},\Vert.\Vert_{ev^{(2)}(p;q)})$ is a Banach multi-ideal:

\begin{proposition}
$(\mathcal{L}_{as(p;q)}^{n,ev},\Vert.\Vert_{ev^{(2)}(p;q)})_{n=1}^{\infty}$ is
a Banach multi-ideal.
\end{proposition}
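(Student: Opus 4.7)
The plan is to verify in turn the axioms (Ma), (Mb), (M1), (M2), (M3) and the completeness of each component $\mathcal{L}_{as(p;q)}^{n,ev}(E_1,\ldots,E_n;F)$. The normalisation (M2) is already supplied by the cited \cite[Proposition 4.3]{Scand}, so only the remaining items require work. Linearity and the fact that $\Vert\cdot\Vert_{ev^{(2)}(p;q)}$ is a norm on each component follow routinely from the triangle inequality in $\ell_p(F)$ and from the observation that the single-term choice $x_1^{(k)}=z_k$, $x_j^{(k)}=0$ for $j\geq 2$, together with $a_1=\cdots=a_n=0$, dominates $\Vert T(z_1,\ldots,z_n)\Vert$ by $\Vert T\Vert_{ev^{(2)}(p;q)}\prod_k\Vert z_k\Vert$; this shows both that $\Vert T\Vert_{ev^{(2)}(p;q)}=0$ forces $T\equiv 0$ and that $\Vert T\Vert\leq \Vert T\Vert_{ev^{(2)}(p;q)}$.

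To check that finite-type $n$-linear mappings belong to the class, it suffices by linearity to treat $T=\varphi_1\otimes\cdots\otimes\varphi_n\otimes y$. The difference $T(a_1+x_j^{(1)},\ldots,a_n+x_j^{(n)})-T(a_1,\ldots,a_n)$ expands telescopically into a finite sum of monomials; each summand contains at least one factor $\varphi_k(x_j^{(k)})$, a sequence in $\ell_q\subseteq\ell_p$, while every other factor is either constant or a bounded scalar sequence, so each monomial, viewed as a sequence in $j$, lies in $\ell_p$ and so does the full difference. Conditions (Mb) and (M3) are then obtained by direct substitution: given $u_j\in\mathcal{L}_1(G_j;E_j)$ and $v\in\mathcal{L}_1(F;H)$, replace $a_k$ by $u_k(a_k)$ and $x_j^{(k)}$ by $u_k(x_j^{(k)})$ in the defining inequality, and use $\Vert(u_k(x_j^{(k)}))_j\Vert_{w,q}\leq\Vert u_k\Vert\,\Vert(x_j^{(k)})_j\Vert_{w,q}$, $\Vert u_k(a_k)\Vert\leq\Vert u_k\Vert\,\Vert a_k\Vert$, together with pulling $\Vert v\Vert$ out on the left, to read off both the membership of $v\circ T\circ(u_1,\ldots,u_n)$ and the estimate with constant $\Vert v\Vert\,\Vert T\Vert_{ev^{(2)}(p;q)}\prod_k\Vert u_k\Vert$.

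The main obstacle is completeness. Let $(T^{(m)})_m$ be Cauchy in $(\mathcal{L}_{as(p;q)}^{n,ev},\Vert\cdot\Vert_{ev^{(2)}(p;q)})$. The dominance $\Vert\cdot\Vert\leq\Vert\cdot\Vert_{ev^{(2)}(p;q)}$ noted above makes it Cauchy in the uniform norm, so $T^{(m)}\to T$ in $\mathcal{L}(E_1,\ldots,E_n;F)$, and in particular pointwise. Fix $(a_1,\ldots,a_n)$ and $(x_j^{(k)})\in\ell_q^u(E_k)$: for every finite $N$ the partial sum $\sum_{j=1}^N\Vert T^{(m)}(a_1+x_j^{(1)},\ldots,a_n+x_j^{(n)})-T^{(m)}(a_1,\ldots,a_n)\Vert^p$ depends continuously on $T^{(m)}$, so passing to the limit in $m$ yields the same partial sum for $T$, bounded by $(\liminf_m\Vert T^{(m)}\Vert_{ev^{(2)}(p;q)})^p\prod_k(\Vert a_k\Vert+\Vert(x_j^{(k)})_j\Vert_{w,q})^p$; letting $N\to\infty$ gives $T\in\mathcal{L}_{as(p;q)}^{n,ev}$, since Cauchy sequences are bounded. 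Running the same Fatou-type argument on $T^{(\ell)}-T^{(m)}$, first letting $m\to\infty$ and then $N\to\infty$, produces $\Vert T^{(\ell)}-T\Vert_{ev^{(2)}(p;q)}\to 0$, which completes the proof. The only delicate step is this final interchange of limits, but it is entirely standard once the pointwise convergence of $T^{(m)}$ to $T$ is in hand.
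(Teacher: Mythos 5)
Your proof is correct and, on the one axiom the paper actually writes out --- the ideal inequality (Mb)/(M3), obtained by substituting $u_k(a_k)$ and $u_k(x_j^{(k)})$ into the defining estimate and pulling out $\Vert w\Vert$ and $\prod_k\Vert u_k\Vert$ --- it coincides with the paper's argument. The remaining items (norm axioms via the single-term test sequence, finite-type membership, completeness via the standard Fatou-type double limit) are precisely those the paper dismisses as ``easily verified'', and your verifications of them are sound.
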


\begin{proof}
Let $u_{j}\in\mathcal{L}\left(  G_{j},E_{j}\right)  $, $j=1,\ldots,n$,
$T\in\mathcal{L}_{as(p;q)}^{n,ev}\left(  E_{1},\ldots,E_{n};F\right)  $ and
$w\in\mathcal{L}(F;G)$. Note that%
\begin{align*}
&  \left(  {\sum\limits_{j=1}^{\infty}}\left\Vert w\circ T\circ\left(
u_{1},\ldots,u_{n}\right)  \left(  a_{1}+x_{j}^{(1)},\ldots,a_{n}+x_{j}%
^{(n)}\right)  -w\circ T\circ\left(  u_{1},\ldots,u_{n}\right)  \left(
a_{1},\ldots,a_{n}\right)  \right\Vert ^{p}\right)  ^{1/p}\\
&  \leq\left\Vert w\right\Vert \left(  {\sum\limits_{j=1}^{\infty}}\left\Vert
T\left(  u_{1}(a_{1})+u_{1}(x_{j}^{(1)}),\ldots,u_{n}(a_{n})+u_{n}(x_{j}%
^{(n)})\right)  -T\left(  u_{1}(a_{1}),\ldots,u_{n}(a_{n})\right)  \right\Vert
^{p}\right)  ^{1/p}\\
&  \leq\left\Vert w\right\Vert \left\Vert T\right\Vert _{ev^{(2)}(p;q)}%
%TCIMACRO{\tprod \limits_{k=1}^{n}}%
%BeginExpansion
{\textstyle\prod\limits_{k=1}^{n}}
%EndExpansion
\left(  \left\Vert u_{k}(a_{k})\right\Vert +\left\Vert \left(  u_{k}\left(
x_{j}^{(k)}\right)  \right)  _{j=1}^{\infty}\right\Vert _{w,q}\right) \\
&  \leq\left\Vert w\right\Vert \left\Vert T\right\Vert _{ev^{(2)}%
(p;q)}\left\Vert u_{1}\right\Vert \cdots\left\Vert u_{n}\right\Vert
%TCIMACRO{\tprod \limits_{k=1}^{n}}%
%BeginExpansion
{\textstyle\prod\limits_{k=1}^{n}}
%EndExpansion
\left(  \left\Vert a_{k}\right\Vert +\left\Vert \left(  x_{j}^{(k)}\right)
_{j=1}^{\infty}\right\Vert _{w,q}\right)
\end{align*}
and it follows that
\[
\left\Vert w\circ T\circ\left(  u_{1},\ldots,u_{n}\right)  \right\Vert
_{ev^{(2)}(p;q)}\leq\left\Vert w\right\Vert \left\Vert T\right\Vert
_{ev^{(2)}(p;q)}\left\Vert u_{1}\right\Vert \cdots\left\Vert u_{n}\right\Vert
.
\]
The other properties of multi-ideals are easily verified.
\end{proof}

In general, the ideal $(\mathcal{P}_{as(p;q)}^{n,ev},\Vert.\Vert
_{ev^{(2)}(p;q)})_{n=1}^{\infty}$ has indeed good properties (see \cite{QM}
for details). It was also proved in \cite[Proposition 4.4]{Scand} that one can
also show that
\[
\Vert P_{n}:\mathbb{K}\rightarrow\mathbb{K};P_{n}\left(  \lambda\right)
=\lambda^{n}\Vert_{ev^{(2)}(p;q)}=1
\]
for all $p\geq q\geq1$ and it is also not difficult to show that
$(\mathcal{P}_{as(p;q)}^{n,ev},\Vert.\Vert_{ev^{(2)}(p;q)})_{n=1}^{\infty}$ is
a Banach polynomial ideal. Besides, in \cite[Proposition 4.9]{Scand} it is
also proved that $(\mathcal{P}_{as(p;q)}^{n,ev},\Vert.\Vert_{ev^{(2)}%
(p;q)})_{n=1}^{\infty}$ is a global holomorphy type. The main result of this
section, Theorem \ref{plm}, shows that, contrary to what happens to the
sequence $\left(  \left(  \mathcal{P}_{as(p,q)}^{n},\left\Vert .\right\Vert
_{as(p;q)}\right)  ,\left(  \mathcal{L}_{as(p,q)}^{n},\left\Vert .\right\Vert
_{as(p;q)}\right)  \right)  _{n=1}^{\infty}$, the sequence
\[
\left(  (\mathcal{P}_{as(p;q)}^{n,ev},\Vert.\Vert_{ev^{(2)}(p;q)}%
),(\mathcal{L}_{as(p;q)}^{n,ev},\Vert.\Vert_{ev^{(2)}(p;q)})\right)
_{n=1}^{\infty}%
\]
is coherent and compatible with $\Pi_{p,q}$.

The following result is important for our purposes (note that this is a
variation of \cite[Proposition 3.5]{Scand}):

\begin{proposition}
\label{bhy}A polynomial $P$ belongs to $\mathcal{P}_{as(p;q)}^{n,ev}(^{n}E;F)$
if and only if $\overset{\vee}{P}$ belongs to $\mathcal{L}_{as(p;q)}%
^{n,ev}(^{n}E;F).$
\end{proposition}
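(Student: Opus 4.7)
\medskip

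The plan is to prove the two implications separately, with the forward direction (multilinear $\Rightarrow$ polynomial) being trivial and the converse relying on the polarisation formula plus a homogeneity-scaling trick.

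\textbf{Easy direction.} Suppose $\overset{\vee}{P}\in\mathcal{L}_{as(p;q)}^{n,ev}(^nE;F)$. In the multilinear defining inequality I would simply set $E_1=\cdots=E_n=E$, choose all $a_k$ equal to a single $a\in E$, and choose all $(x_j^{(k)})_j$ equal to a single $(x_j)_j\in\ell_q^u(E)$. Since $\overset{\vee}{P}(a+x_j,\ldots,a+x_j)-\overset{\vee}{P}(a,\ldots,a)=P(a+x_j)-P(a)$, the product on the right collapses to $(\|a\|+\|(x_j)\|_{w,q})^n$ and one reads off the polynomial estimate with $\|P\|_{ev^{(2)}(p;q)}\le \|\overset{\vee}{P}\|_{ev^{(2)}(p;q)}$.

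\textbf{Hard direction.} Assume $P\in\mathcal{P}_{as(p;q)}^{n,ev}(^nE;F)$. The plan is to invoke the polarisation formula
\[
\overset{\vee}{P}(z_1,\ldots,z_n)=\frac{1}{n!\,2^n}\sum_{\varepsilon\in\{-1,1\}^n}\varepsilon_1\cdots\varepsilon_n\,P\!\left(\varepsilon_1 z_1+\cdots+\varepsilon_n z_n\right),
\]
applied first to $z_k=a_k+x_j^{(k)}$ and then to $z_k=a_k$, and subtract. Setting $b_\varepsilon:=\sum_k\varepsilon_k a_k$ and $y_j^\varepsilon:=\sum_k \varepsilon_k x_j^{(k)}$, the difference $\overset{\vee}{P}(a_1+x_j^{(1)},\ldots,a_n+x_j^{(n)})-\overset{\vee}{P}(a_1,\ldots,a_n)$ becomes $\frac{1}{n!\,2^n}\sum_\varepsilon \varepsilon_1\cdots\varepsilon_n[P(b_\varepsilon+y_j^\varepsilon)-P(b_\varepsilon)]$. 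Taking the $\ell_p$-norm in $j$, the triangle inequality together with the polynomial bound (noting that $(y_j^\varepsilon)_j\in\ell_q^u(E)$ with $\|(y_j^\varepsilon)\|_{w,q}\le\sum_k\|(x_j^{(k)})\|_{w,q}$ and $\|b_\varepsilon\|\le\sum_k\|a_k\|$) yields
\[
\Bigl(\sum_j\|\overset{\vee}{P}(a_1+x_j^{(1)},\ldots,a_n+x_j^{(n)})-\overset{\vee}{P}(a_1,\ldots,a_n)\|^p\Bigr)^{1/p}\le \frac{\|P\|_{ev^{(2)}(p;q)}}{n!}\Bigl(\sum_{k=1}^n(\|a_k\|+\|(x_j^{(k)})\|_{w,q})\Bigr)^n.
\]

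\textbf{Main obstacle: sum vs.\ product.} The right-hand side above is a sum to the $n$-th power, but the definition of $\mathcal{L}_{as(p;q)}^{n,ev}$ demands a \emph{product} $\prod_k(\|a_k\|+\|(x_j^{(k)})\|_{w,q})$. To overcome this I would exploit the $n$-linearity of $\overset{\vee}{P}$ via the standard scaling trick: replacing each $a_k,x_j^{(k)}$ by $t_k a_k, t_k x_j^{(k)}$ for $t_k>0$ scales the left-hand side by $t_1\cdots t_n$ (hence dividing by that factor restores it), while the right-hand side becomes $\frac{\|P\|_{ev^{(2)}(p;q)}}{n!\,t_1\cdots t_n}\bigl(\sum_k t_k(\|a_k\|+\|(x_j^{(k)})\|_{w,q})\bigr)^n$. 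Choosing $t_k=(\|a_k\|+\|(x_j^{(k)})\|_{w,q})^{-1}$ (and handling the harmless degenerate case where some factor vanishes, in which case the left-hand side is zero by multilinearity) collapses the sum to $n$ and turns $1/(t_1\cdots t_n)$ into the desired product, giving
\[
\|\overset{\vee}{P}\|_{ev^{(2)}(p;q)}\le\frac{n^n}{n!}\,\|P\|_{ev^{(2)}(p;q)}.
\]
This establishes both the class membership and a quantitative comparison of norms.
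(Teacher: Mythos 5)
Your proof is correct and follows the same route as the paper: the trivial direction by restriction to the diagonal, and the nontrivial direction via the polarization formula applied to the increments $\overset{\vee}{P}(b_1+x_j^{(1)},\ldots,b_n+x_j^{(n)})-\overset{\vee}{P}(b_1,\ldots,b_n)$. The paper stops at the polarization identity and simply writes ``the result follows''---it can afford to, because the characterization theorems quoted just before the proposition reduce membership in either class to mere $\ell_p$-membership of the increment sequences, so no constant-chasing is required; your sum-to-product scaling argument is a correct way to go further and extract the explicit bound $\|\overset{\vee}{P}\|_{ev^{(2)}(p;q)}\le (n^n/n!)\,\|P\|_{ev^{(2)}(p;q)}$, which the paper does not record.
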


\begin{proof}
Let us prove the nontrivial implication. Let $b_{k}\in E$ with $k=1,...,n$.
Using the Polarization Formula (see \cite{din, Mu}) we have%
\begin{align*}
&  n!\,2^{n}\left[  \overset{\vee}{P}\left(  b_{1}+x_{j}^{(1)},\ldots
,b_{n}+x_{j}^{(n)}\right)  -\overset{\vee}{P}(b_{1},\ldots,b_{n})\right] \\
&  =%
%TCIMACRO{\dsum \limits_{\varepsilon_{i}=\pm1}}%
%BeginExpansion
{\displaystyle\sum\limits_{\varepsilon_{i}=\pm1}}
%EndExpansion
\varepsilon_{1}\cdots\varepsilon_{n}P\left(  \varepsilon_{1}(b_{1}+x_{j}%
^{(1)})+\cdots+\varepsilon_{n}(b_{n}+x_{j}^{(n)})\right)  -%
%TCIMACRO{\dsum \limits_{\varepsilon_{i}=\pm1}}%
%BeginExpansion
{\displaystyle\sum\limits_{\varepsilon_{i}=\pm1}}
%EndExpansion
\varepsilon_{1}\cdots\varepsilon_{n}P\left(  \varepsilon_{1}b_{1}%
+\cdots+\varepsilon_{n}b_{n}\right) \\
&  =%
%TCIMACRO{\dsum \limits_{\varepsilon_{i}=\pm1}}%
%BeginExpansion
{\displaystyle\sum\limits_{\varepsilon_{i}=\pm1}}
%EndExpansion
\varepsilon_{1}\cdots\varepsilon_{n}\left[  P\left(  \left(  \varepsilon
_{1}b_{1}+\cdots+\varepsilon_{n}b_{n}\right)  +(\varepsilon_{1}x_{j}%
^{(1)}+\cdots+\varepsilon_{n}x_{j}^{(n)})\right)  -P\left(  \varepsilon
_{1}b_{1}+\cdots+\varepsilon_{n}b_{n}\right)  \right]
\end{align*}
and the result follows.
\end{proof}

\begin{proposition}
\label{peww}If $P\in\mathcal{P}_{as(p;q)}^{n,ev}\left(  ^{n}E;F\right)  $ and
$\gamma\in E^{\ast}$, then $\gamma P\in$ $\mathcal{P}_{as(p;q)}^{n+1,ev}%
\left(  ^{n+1}E;F\right)  $ and
\begin{equation}
\left\Vert \gamma P\right\Vert _{ev^{(2)}(p;q)}\leq\left\Vert \gamma
\right\Vert \left\Vert P\right\Vert _{ev^{(2)}(p;q)}. \label{fij}%
\end{equation}

\end{proposition}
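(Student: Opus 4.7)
The plan is to verify the definition of everywhere $(p;q)$-summability for $\gamma P$ directly, using the algebraic identity
\[
(\gamma P)(a + x_j) - (\gamma P)(a) = \gamma(a)\bigl[P(a+x_j) - P(a)\bigr] + \gamma(x_j)\,P(a+x_j),
\]
which isolates a ``difference term'' already controlled by the hypothesis on $P$ and a ``scalar term'' controlled by $\gamma$.

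First I would fix $a \in E$ and $(x_j)_{j=1}^\infty \in \ell_q^u(E)$ and apply the $\ell_p$-triangle inequality to the identity above to obtain
\[
\left(\sum_{j=1}^\infty \|(\gamma P)(a+x_j) - (\gamma P)(a)\|^p\right)^{1/p} \le |\gamma(a)|\,S_1 + S_2,
\]
where $S_1 := \left(\sum_j \|P(a+x_j) - P(a)\|^p\right)^{1/p}$ and $S_2 := \left(\sum_j |\gamma(x_j)|^p \|P(a+x_j)\|^p\right)^{1/p}$.

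The bound on $S_1$ is immediate from $P \in \mathcal{P}_{as(p;q)}^{n,ev}$:
\[
S_1 \le \|P\|_{ev^{(2)}(p;q)} \bigl(\|a\| + \|(x_j)\|_{w,q}\bigr)^n,
\]
and then $|\gamma(a)| \le \|\gamma\|\|a\|$. For $S_2$, I would uniformly bound $\|P(a+x_j)\| \le \|P\|\,(\|a\|+\|x_j\|)^n \le \|P\|_{ev^{(2)}(p;q)} (\|a\| + \|(x_j)\|_{w,q})^n$, using both the elementary inequality $\|x_j\| \le \|(x_i)\|_{w,q}$ and the fact that $\|P\| \le \|P\|_{ev^{(2)}(p;q)}$ (obtained by setting $a=0$ and taking $(x_j) = (x, 0, 0, \dots)$ in the defining inequality of $\|\cdot\|_{ev^{(2)}(p;q)}$). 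Pulling this uniform bound out of $S_2$ leaves $\left(\sum_j |\gamma(x_j)|^p\right)^{1/p}$, which I would dominate by $\|\gamma\|\,\|(x_j)\|_{w,q}$, using that $(\gamma(x_j))_j \in \ell_q$ with $\ell_q$-norm $\le \|\gamma\|\|(x_j)\|_{w,q}$ and the inclusion $\ell_q \hookrightarrow \ell_p$ (of norm one) since $q \le p$.

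Assembling the two estimates,
\[
|\gamma(a)|\,S_1 + S_2 \le \|\gamma\|\|P\|_{ev^{(2)}(p;q)}\bigl(\|a\| + \|(x_j)\|_{w,q}\bigr)^n \bigl(\|a\| + \|(x_j)\|_{w,q}\bigr),
\]
which is exactly $\|\gamma\|\|P\|_{ev^{(2)}(p;q)} (\|a\| + \|(x_j)\|_{w,q})^{n+1}$. Taking the infimum over admissible constants yields $\gamma P \in \mathcal{P}_{as(p;q)}^{n+1,ev}$ together with the desired estimate (\ref{fij}). The only real obstacle is picking the ``right'' decomposition of $(\gamma P)(a+x_j) - (\gamma P)(a)$: the alternative splitting $\gamma(a+x_j)[P(a+x_j)-P(a)] + \gamma(x_j)P(a)$ produces a spurious factor of $2$, whereas the decomposition chosen above makes the two pieces fit cleanly into a single factor $(\|a\| + \|(x_j)\|_{w,q})^{n+1}$ with constant $1$, which is what the coherence requirement $\beta_4 = 1$ demands.
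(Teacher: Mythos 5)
Your proposal is correct and follows essentially the same route as the paper: the identical decomposition $\gamma(a)\bigl[P(a+x_j)-P(a)\bigr]+\gamma(x_j)P(a+x_j)$, the same bound on the first term via the $ev^{(2)}(p;q)$-norm of $P$, and the same treatment of the second term using $\|P\|\le\|P\|_{ev^{(2)}(p;q)}$, $\sup_j\|a+x_j\|\le\|a\|+\|(x_j)\|_{w,q}$, and the $\ell_q\hookrightarrow\ell_p$ inclusion to get $\bigl(\sum_j|\gamma(x_j)|^p\bigr)^{1/p}\le\|\gamma\|\,\|(x_j)\|_{w,q}$. No gaps; the assembly into $(\|a\|+\|(x_j)\|_{w,q})^{n+1}$ with constant $1$ matches the paper exactly.
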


\begin{proof}
Let $(x_{j})_{j=1}^{\infty}\in\ell_{q}^{u}(E)$. Note that%
\begin{align*}
&  \left(
%TCIMACRO{\dsum _{j=1}^{\infty}}%
%BeginExpansion
{\displaystyle\sum_{j=1}^{\infty}}
%EndExpansion
\left\Vert \gamma(a+x_{j})P(a+x_{j})-\gamma(a)P(a)\right\Vert ^{p}\right)
^{1/p}\\
&  \leq\left\vert \gamma(a)\right\vert \left(
%TCIMACRO{\dsum _{j=1}^{\infty}}%
%BeginExpansion
{\displaystyle\sum_{j=1}^{\infty}}
%EndExpansion
\left\Vert P(a+x_{j})-P(a)\right\Vert ^{p}\right)  ^{1/p}+\left(
%TCIMACRO{\dsum _{j=1}^{\infty}}%
%BeginExpansion
{\displaystyle\sum_{j=1}^{\infty}}
%EndExpansion
\left\Vert \gamma(x_{j})P(a+x_{j})\right\Vert ^{p}\right)  ^{1/p}\\
&  \leq\left\Vert \gamma\right\Vert \left\Vert a\right\Vert \left\Vert
P\right\Vert _{ev^{(2)}(p;q)}\left(  \left\Vert a\right\Vert +\left\Vert
(x_{j})_{j=1}^{\infty}\right\Vert _{w,q}\right)  ^{n}+\left\Vert P\right\Vert
\left(  \sup_{j}\left\Vert a+x_{j}\right\Vert ^{n}\right)  \left(
%TCIMACRO{\dsum _{j=1}^{\infty}}%
%BeginExpansion
{\displaystyle\sum_{j=1}^{\infty}}
%EndExpansion
\left\vert \gamma(x_{j})\right\vert ^{p}\right)  ^{1/p}.
\end{align*}
Since $q\leq p$,
\[
\left\Vert P\right\Vert _{ev^{(2)}(p;q)}\geq\left\Vert P\right\Vert \text{ and
}\sup_{j}\left\Vert a+x_{j}\right\Vert \leq\left(  \left\Vert a\right\Vert
+\left\Vert (x_{j})_{j=1}^{\infty}\right\Vert _{w,q}\right)  ,
\]
we have%
\begin{align*}
&  \left(
%TCIMACRO{\dsum _{j=1}^{\infty}}%
%BeginExpansion
{\displaystyle\sum_{j=1}^{\infty}}
%EndExpansion
\left\Vert \gamma(a+x_{j})P(a+x_{j})-\gamma(a)P(a)\right\Vert ^{p}\right)
^{1/p}\\
&  \leq\left\Vert \gamma\right\Vert \left\Vert a\right\Vert \left\Vert
P\right\Vert _{ev^{(2)}(p;q)}\left(  \left\Vert a\right\Vert +\left\Vert
(x_{j})_{j=1}^{\infty}\right\Vert _{w,q}\right)  ^{n}\\
&  +\left\Vert P\right\Vert _{ev^{(2)}(p;q)}\left(  \left\Vert a\right\Vert
+\left\Vert (x_{j})_{j=1}^{\infty}\right\Vert _{w,q}\right)  ^{n}\left\Vert
\gamma\right\Vert \left\Vert (x_{j})_{j=1}^{\infty}\right\Vert _{w,q}\\
&  =\left\Vert \gamma\right\Vert \left\Vert P\right\Vert _{ev^{(2)}%
(p;q)}\left(  \left\Vert a\right\Vert +\left\Vert (x_{j})_{j=1}^{\infty
}\right\Vert _{w,q}\right)  ^{n+1}%
\end{align*}
and we get (\ref{fij}).
\end{proof}

\begin{proposition}
\label{pew}If $P\in\mathcal{P}_{as(p;q)}^{n+1,ev}\left(  ^{n+1}E;F\right)  $
and $a\in E$, then $P_{a}\in$ $\mathcal{P}_{as(p;q)}^{n,ev}\left(
^{n}E;F\right)  $ and
\begin{equation}
\left\Vert P_{a}\right\Vert _{ev^{(2)}(p;q)}\leq\left\Vert \overset{\vee}%
{P}\right\Vert _{ev^{(2)}(p;q)}\left\Vert a\right\Vert . \label{nhg}%
\end{equation}

\end{proposition}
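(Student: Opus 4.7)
The plan is to reduce the problem to the multilinear version for the symmetric $(n+1)$-linear map $\overset{\vee}{P}$. By Proposition \ref{bhy}, $\overset{\vee}{P}$ belongs to $\mathcal{L}_{as(p;q)}^{n+1,ev}(^{n+1}E;F)$, and the multilinear norm of $\overset{\vee}{P}$ is precisely the quantity $\|\overset{\vee}{P}\|_{ev^{(2)}(p;q)}$ appearing on the right-hand side of \eqref{nhg}. The key observation is that the definition of $P_a$ via the symmetric multilinear mapping, namely $P_a(y)=\overset{\vee}{P}(a,y,\ldots,y)$, allows the expression $P_a(b+x_j)-P_a(b)$ to be realised as a difference of two evaluations of $\overset{\vee}{P}$ obtained by freezing its first entry at $a$.

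More precisely, I would fix $b\in E$ and $(x_j)_{j=1}^{\infty}\in\ell_q^u(E)$ and write
\[
P_a(b+x_j)-P_a(b)=\overset{\vee}{P}(a,b+x_j,\ldots,b+x_j)-\overset{\vee}{P}(a,b,\ldots,b).
\]
Then I would apply the defining inequality of $\mathcal{L}_{as(p;q)}^{n+1,ev}$ to $\overset{\vee}{P}$ with $a_1=a$, $x_j^{(1)}=0$ for every $j$ (so that the sequence $(x_j^{(1)})_{j=1}^{\infty}$ lies trivially in $\ell_q^u(E)$ with weak norm zero), and $a_k=b$, $x_j^{(k)}=x_j$ for $k=2,\ldots,n+1$. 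This directly yields
\[
\left(\sum_{j=1}^{\infty}\|P_a(b+x_j)-P_a(b)\|^p\right)^{1/p}\leq \|\overset{\vee}{P}\|_{ev^{(2)}(p;q)}\,\|a\|\,(\|b\|+\|(x_j)_{j=1}^{\infty}\|_{w,q})^n,
\]
which, being valid for arbitrary $b$ and $(x_j)_{j=1}^{\infty}$, simultaneously shows that $P_a\in\mathcal{P}_{as(p;q)}^{n,ev}(^nE;F)$ and establishes the norm inequality \eqref{nhg}.

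There is no real obstacle; the argument is essentially mechanical once Proposition \ref{bhy} is invoked. The only points that require mild attention are the symmetry of $\overset{\vee}{P}$ (so that placing $a$ in the first slot is legitimate and matches the definition of $P_a$), and the observation that the zero sequence belongs to $\ell_q^u(E)$ with weak $\ell_q$-norm equal to zero, which is what collapses the factor $\|a\|+\|(x_j^{(1)})\|_{w,q}$ to the desired $\|a\|$ on the right-hand side.
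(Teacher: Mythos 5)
Your argument is correct and is essentially the paper's own proof: both reduce to the multilinear everywhere-summing inequality for $\overset{\vee}{P}$ (via Proposition \ref{bhy}) with the first slot frozen at $a$ and the zero sequence as the corresponding perturbation, so that the factor $\left\Vert a\right\Vert +\left\Vert (0)_{j=1}^{\infty}\right\Vert _{w,q}$ collapses to $\left\Vert a\right\Vert$. Nothing is missing.
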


\begin{proof}
Let $(x_{j})_{j=1}^{\infty}\in\ell_{q}^{u}(E)$ and $b\in E$. We just need to
note that%
\begin{align*}
\left(
%TCIMACRO{\dsum _{j=1}^{\infty}}%
%BeginExpansion
{\displaystyle\sum_{j=1}^{\infty}}
%EndExpansion
\left\Vert P_{a}(b+x_{j})-P_{a}(b)\right\Vert ^{p}\right)  ^{1/p}  &  =\left(
%
%TCIMACRO{\dsum _{j=1}^{\infty}}%
%BeginExpansion
{\displaystyle\sum_{j=1}^{\infty}}
%EndExpansion
\left\Vert \overset{\vee}{P}(a,\left(  b+x_{j}\right)  ^{n})-\overset{\vee}%
{P}(a,b^{n})\right\Vert ^{p}\right)  ^{1/p}\\
&  =\left(
%TCIMACRO{\dsum _{j=1}^{\infty}}%
%BeginExpansion
{\displaystyle\sum_{j=1}^{\infty}}
%EndExpansion
\left\Vert \overset{\vee}{P}(a+0,\left(  b+x_{j}\right)  ^{n})-\overset{\vee
}{P}(a,b^{n})\right\Vert ^{p}\right)  ^{1/p}\\
&  \leq\left\Vert \overset{\vee}{P}\right\Vert _{ev^{(2)}(p;q)}\left(
\left\Vert a\right\Vert +\left\Vert (0)_{j=1}^{\infty}\right\Vert
_{w,q}\right)  \left(  \left\Vert b\right\Vert +\left\Vert (x_{j}%
)_{j=1}^{\infty}\right\Vert _{w,q}\right)  ^{n}.
\end{align*}

\end{proof}

\begin{proposition}
Let $i\in\left\{  1,\ldots,n+1\right\}  $. If $T\in\mathcal{L}_{as(p;q)}%
^{n+1,ev}\left(  E_{1},\ldots,E_{n+1};F\right)  $ and $a_{i}\in E_{i}$, then
$T_{a_{i}}\in\mathcal{L}_{as(p;q)}^{ev}\left(  E_{1},\ldots,E_{i-1}%
,E_{i+1},\ldots,E_{n+1};F\right)  $ and
\begin{equation}
\left\Vert T_{a_{i}}\right\Vert _{ev^{\left(  2\right)  }\left(  p;q\right)
}\leq\left\Vert T\right\Vert _{ev^{\left(  2\right)  }\left(  p;q\right)
}\left\Vert a_{i}\right\Vert . \label{mul}%
\end{equation}

\end{proposition}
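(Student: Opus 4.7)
The plan is to mimic the polynomial version (Proposition \ref{pew}) in the multilinear setting, using the ``insert a zero sequence'' trick in the $i$-th slot.

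First, I would fix arbitrary elements $b_k\in E_k$ and sequences $(x_j^{(k)})_{j=1}^{\infty}\in\ell_q^u(E_k)$ for all $k\in\{1,\ldots,n+1\}\setminus\{i\}$, and then consider the quantity
\[
\left(\sum_{j=1}^{\infty}\bigl\|T_{a_i}(b_1+x_j^{(1)},\ldots,b_{n+1}+x_j^{(n+1)})-T_{a_i}(b_1,\ldots,b_{n+1})\bigr\|^p\right)^{1/p},
\]
where in both terms the $i$-th slot is omitted. Unfolding the definition of $T_{a_i}$, this is exactly
\[
\left(\sum_{j=1}^{\infty}\bigl\|T(b_1+x_j^{(1)},\ldots,a_i,\ldots,b_{n+1}+x_j^{(n+1)})-T(b_1,\ldots,a_i,\ldots,b_{n+1})\bigr\|^p\right)^{1/p},
\]
with $a_i$ sitting in position $i$ in both arguments.

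The key observation is that one may formally rewrite $a_i$ as $a_i+0$ and view the constant zero sequence $(0)_{j=1}^{\infty}$ as an element of $\ell_q^u(E_i)$ with $\|(0)_{j=1}^{\infty}\|_{w,q}=0$. Then the above expression fits precisely the defining inequality of $\mathcal{L}_{as(p;q)}^{n+1,ev}$ applied to $T$, with the tuple of base points $(b_1,\ldots,b_{i-1},a_i,b_{i+1},\ldots,b_{n+1})$ and the tuple of weakly $q$-summable sequences obtained by inserting $(0)_{j=1}^{\infty}$ in the $i$-th coordinate.

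Applying the hypothesis $T\in\mathcal{L}_{as(p;q)}^{n+1,ev}$ therefore yields the upper bound
\[
\|T\|_{ev^{(2)}(p;q)}\bigl(\|a_i\|+\|(0)_{j=1}^{\infty}\|_{w,q}\bigr)\prod_{k\neq i}\bigl(\|b_k\|+\|(x_j^{(k)})_{j=1}^{\infty}\|_{w,q}\bigr)=\|T\|_{ev^{(2)}(p;q)}\|a_i\|\prod_{k\neq i}\bigl(\|b_k\|+\|(x_j^{(k)})_{j=1}^{\infty}\|_{w,q}\bigr),
\]
which is exactly inequality (\ref{mul}). Taking the infimum (or rather, reading off the norm defined as the smallest constant) gives the conclusion. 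There is no real obstacle: the only thing worth checking is that $(0)_{j=1}^{\infty}\in\ell_q^u(E_i)$ with vanishing weak $q$-norm, which is immediate, and that the definition of $\|T\|_{ev^{(2)}(p;q)}$ is robust enough to allow any base point in the $i$-th slot, which is built into the definition as given.
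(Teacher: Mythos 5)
Your proof is correct and is essentially identical to the paper's: both arguments fix base points and weakly $q$-summable sequences in every slot except the $i$-th, rewrite $a_i$ as $a_i+0$ so that the constant zero sequence (with vanishing $\Vert\cdot\Vert_{w,q}$-norm) occupies the $i$-th coordinate, and then apply the defining inequality of $\mathcal{L}_{as(p;q)}^{n+1,ev}$ so that the $i$-th factor collapses to $\Vert a_i\Vert$. No gaps.
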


\begin{proof}
Let $\left(  x_{j}^{\left(  k\right)  }\right)  _{j=1}^{\infty}\in l_{q}%
^{u}\left(  E_{k}\right)  ~$and $a_{j}\in E_{j}$ for all $j\neq i$. The proof
follows from the inequalities
\begin{align*}
&  \left(  \sum_{j=1}^{\infty}\left\Vert
\begin{array}
[c]{c}%
T_{a_{i}}\left(  a_{1}+x_{j}^{\left(  1\right)  },\ldots,a_{i-1}%
+x_{j}^{\left(  i-1\right)  },a_{i+1}+x_{j}^{\left(  i+1\right)  }%
,\ldots,a_{n+1}+x_{j}^{\left(  n+1\right)  }\right) \\
-T_{a_{i}}\left(  a_{1},\ldots,a_{i-1},a_{i+1},\ldots,a_{n+1}\right)
\end{array}
\right\Vert ^{p}\right)  ^{1/p}\\
&  =\left(  \sum_{j=1}^{\infty}\left\Vert
\begin{array}
[c]{c}%
T\left(  a_{1}+x_{j}^{\left(  1\right)  },\ldots,a_{i-1}+x_{j}^{\left(
i-1\right)  },a_{i}+0,a_{i+1}+x_{j}^{\left(  i+1\right)  },\ldots
,a_{n+1}+x_{j}^{\left(  n+1\right)  }\right) \\
-T\left(  a_{1},\ldots,a_{n+1}\right)
\end{array}
\right\Vert ^{p}\right)  ^{1/p}\\
&  \leq\left\Vert T\right\Vert _{ev^{\left(  2\right)  }\left(  p;q\right)
}\left\Vert a_{i}\right\Vert
%TCIMACRO{\dprod \limits_{\substack{k=1\\k\neq i}}^{n+1}}%
%BeginExpansion
{\displaystyle\prod\limits_{\substack{k=1\\k\neq i}}^{n+1}}
%EndExpansion
\left(  \left\Vert a_{k}\right\Vert +\left\Vert \left(  x_{j}^{\left(
k\right)  }\right)  _{j=1}^{\infty}\right\Vert _{w,q}\right)  .
\end{align*}

\end{proof}

\begin{proposition}
\label{mult1}If $T\in\mathcal{L}_{as(p;q)}^{n,ev}\left(  E_{1},\ldots
,E_{n};F\right)  $ and $\gamma\in E_{k+1}^{\ast}$, then
\begin{equation}
\gamma T\in\mathcal{L}_{as(p;q)}^{n+1,ev}\left(  E_{1},\ldots,E_{n+1}%
;F\right)  \text{ and }\left\Vert \gamma T\right\Vert _{ev^{\left(  2\right)
}\left(  p;q\right)  }\leq\left\Vert \gamma\right\Vert \left\Vert T\right\Vert
_{ev^{\left(  2\right)  }\left(  p;q\right)  }. \label{mta}%
\end{equation}

\end{proposition}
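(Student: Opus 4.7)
My plan is to adapt the argument from Proposition \ref{peww} (the polynomial analogue) almost verbatim to the multilinear setting. The key idea is a single telescoping decomposition that separates the action of $\gamma$ on $a_{n+1}$ from its action on the perturbation $x_j^{(n+1)}$.

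Concretely, by inserting and subtracting the term $\gamma(a_{n+1})\,T(a_1+x_j^{(1)},\ldots,a_n+x_j^{(n)})$, I would rewrite the difference $(\gamma T)(a_1+x_j^{(1)},\ldots,a_{n+1}+x_j^{(n+1)}) - (\gamma T)(a_1,\ldots,a_{n+1})$ as the sum of $\gamma(a_{n+1})\bigl[T(a_1+x_j^{(1)},\ldots,a_n+x_j^{(n)}) - T(a_1,\ldots,a_n)\bigr]$ and $\gamma(x_j^{(n+1)})\,T(a_1+x_j^{(1)},\ldots,a_n+x_j^{(n)})$. Taking $\ell_p$-norms in $j$ and applying the triangle inequality, the first piece is controlled immediately by the hypothesis $T\in\mathcal{L}_{as(p;q)}^{n,ev}$ and is bounded by $\|\gamma\|\,\|a_{n+1}\|\,\|T\|_{ev^{(2)}(p;q)}\prod_{k=1}^{n}\bigl(\|a_k\|+\|(x_j^{(k)})_{j=1}^{\infty}\|_{w,q}\bigr)$.

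For the second piece the defining inequality for $T$ does not apply directly; instead I would factor out $\sup_j\|T(a_1+x_j^{(1)},\ldots,a_n+x_j^{(n)})\|$ and estimate it by multilinearity of $T$ together with $\sup_j\|a_k+x_j^{(k)}\| \leq \|a_k\|+\|(x_j^{(k)})_{j=1}^{\infty}\|_{w,q}$ (valid because $q\geq 1$) and $\|T\|\leq\|T\|_{ev^{(2)}(p;q)}$ (obtained by testing the defining inequality on a one-term sequence with all $a_k=0$). The remaining scalar sum is handled by $\bigl(\sum_j|\gamma(x_j^{(n+1)})|^p\bigr)^{1/p} \leq \|\gamma\|\,\|(x_j^{(n+1)})_{j=1}^{\infty}\|_{w,p} \leq \|\gamma\|\,\|(x_j^{(n+1)})_{j=1}^{\infty}\|_{w,q}$, using $p\geq q$. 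Adding the two pieces and factoring out the common product produces the bound $\|\gamma\|\,\|T\|_{ev^{(2)}(p;q)}\prod_{k=1}^{n+1}\bigl(\|a_k\|+\|(x_j^{(k)})_{j=1}^{\infty}\|_{w,q}\bigr)$, which yields (\ref{mta}).

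The only slightly delicate step, which I regard as the main (mild) obstacle, is precisely the handling of the cross term involving $\gamma(x_j^{(n+1)})$: the everywhere-summing inequality for $T$ cannot be invoked on it, so one must settle for a crude supremum bound on the $T$-values, plus the comparison $\|\cdot\|_{w,p}\leq\|\cdot\|_{w,q}$. Membership $\gamma T\in\mathcal{L}_{as(p;q)}^{n+1,ev}$ is then an immediate consequence of the finiteness of the estimate. This argument parallels the proof of Proposition \ref{peww} precisely; the only structural difference is that the polynomial factor $(\|a\|+\|(x_j)\|_{w,q})^n$ is replaced by the product $\prod_{k=1}^n(\|a_k\|+\|(x_j^{(k)})\|_{w,q})$ adapted to the multilinear setting.
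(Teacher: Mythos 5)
Your proposal is correct and matches the paper's own proof essentially step for step: the same insertion of $\gamma(a_{n+1})\,T(a_1+x_j^{(1)},\ldots,a_n+x_j^{(n)})$, the same triangle-inequality split, and the same treatment of the cross term via $\sup_j\prod_k\|a_k+x_j^{(k)}\|$, $\|T\|\leq\|T\|_{ev^{(2)}(p;q)}$, and $\|\cdot\|_{w,p}\leq\|\cdot\|_{w,q}$. The paper itself simply invokes ``the same arguments of Proposition~\ref{peww}'' for the final estimates that you spell out explicitly.
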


\begin{proof}
Let $\left(  x_{j}^{\left(  k\right)  }\right)  _{j=1}^{\infty}\in l_{q}%
^{u}\left(  E_{k}\right)  ~$and $\left(  a_{1},\ldots,a_{n+1}\right)  \in
E_{1}\times\cdots\times E_{n+1}$. Then,
\begin{align*}
&  \left(  \sum_{j=1}^{\infty}\left\Vert \gamma\left(  a_{n+1}+x_{j}^{\left(
n+1\right)  }\right)  T\left(  a_{1}+x_{j}^{\left(  1\right)  },\ldots
,a_{n}+x_{j}^{\left(  n\right)  }\right)  -\gamma\left(  a_{n+1}\right)
T\left(  a_{1},\ldots,a_{n}\right)  \right\Vert ^{p}\right)  ^{1/p}\\
&  \leq\left\vert \gamma\left(  a_{n+1}\right)  \right\vert \left(  \sum
_{j=1}^{\infty}\left\Vert T\left(  a_{1}+x_{j}^{\left(  1\right)  }%
,\ldots,a_{n}+x_{j}^{\left(  n\right)  }\right)  -T\left(  a_{1},\ldots
,a_{n}\right)  \right\Vert ^{p}\right)  ^{1/p}\\
&  +\left(  \sum_{j=1}^{\infty}\left\Vert \gamma\left(  x_{j}^{\left(
n+1\right)  }\right)  T\left(  a_{1}+x_{j}^{\left(  1\right)  },\ldots
,a_{n}+x_{j}^{\left(  n\right)  }\right)  \right\Vert ^{p}\right)  ^{1/p}\\
&  \leq\left\vert \gamma\left(  a_{n+1}\right)  \right\vert \left(  \sum
_{j=1}^{\infty}\left\Vert T\left(  a_{1}+x_{j}^{\left(  1\right)  }%
,\ldots,a_{n}+x_{j}^{\left(  n\right)  }\right)  -T\left(  a_{1},\ldots
,a_{n}\right)  \right\Vert ^{p}\right)  ^{1/p}\\
&  +\left\Vert T\right\Vert \sup_{j}%
%TCIMACRO{\dprod \limits_{k=1}^{n}}%
%BeginExpansion
{\displaystyle\prod\limits_{k=1}^{n}}
%EndExpansion
\left(  \left\Vert a_{k}+x_{j}^{\left(  k\right)  }\right\Vert \right)
\left(  \sum_{j=1}^{\infty}\left\vert \gamma\left(  x_{j}^{\left(  n+1\right)
}\right)  \right\vert ^{p}\right)  ^{1/p}.
\end{align*}
Using the same arguments of Proposition \ref{peww} we have
\begin{align*}
&  \left(  \sum_{j=1}^{\infty}\left\Vert \gamma\left(  a_{n+1}+x_{j}^{\left(
n+1\right)  }\right)  T\left(  a_{1}+x_{j}^{\left(  1\right)  },\ldots
,a_{n}+x_{j}^{\left(  n\right)  }\right)  -\gamma\left(  a_{n+1}\right)
T\left(  a_{1},\ldots,a_{n}\right)  \right\Vert ^{p}\right)  ^{1/p}\\
&  \leq\left\Vert \gamma\right\Vert \left\Vert a_{n+1}\right\Vert \left\Vert
T\right\Vert _{ev^{\left(  2\right)  }\left(  p;q\right)  }%
%TCIMACRO{\dprod \limits_{k=1}^{n}}%
%BeginExpansion
{\displaystyle\prod\limits_{k=1}^{n}}
%EndExpansion
\left(  \left\Vert a_{k}\right\Vert +\left\Vert \left(  x_{j}^{\left(
k\right)  }\right)  _{j=1}^{\infty}\right\Vert _{w,q}\right) \\
&  +\left\Vert T\right\Vert _{ev^{\left(  2\right)  }\left(  p;q\right)  }%
%TCIMACRO{\dprod \limits_{k=1}^{n}}%
%BeginExpansion
{\displaystyle\prod\limits_{k=1}^{n}}
%EndExpansion
\left(  \left\Vert a_{k}\right\Vert +\left\Vert \left(  x_{j}^{\left(
k\right)  }\right)  _{j=1}^{\infty}\right\Vert _{w,q}\right)  \left\Vert
\gamma\right\Vert \left\Vert \left(  x_{j}^{\left(  n+1\right)  }\right)
_{j=1}^{\infty}\right\Vert _{w,q}\\
&  =\left\Vert \gamma\right\Vert \left\Vert T\right\Vert _{ev^{\left(
2\right)  }\left(  p;q\right)  }%
%TCIMACRO{\dprod \limits_{k=1}^{n+1}}%
%BeginExpansion
{\displaystyle\prod\limits_{k=1}^{n+1}}
%EndExpansion
\left(  \left\Vert a_{k}\right\Vert +\left\Vert \left(  x_{j}^{\left(
k\right)  }\right)  _{j=1}^{\infty}\right\Vert _{w,q}\right)
\end{align*}
and the proof is done.
\end{proof}

The coherence and compatibility of $\left(  (\mathcal{P}_{as(p;q)}%
^{n,ev},\Vert.\Vert_{ev^{(2)}(p;q)}),(\mathcal{L}_{as(p;q)}^{n,ev},\Vert
.\Vert_{ev^{(2)}(p;q)})\right)  _{n=1}^{\infty}$ is a consequence of the
previous results and Remark \ref{yv}:

\begin{theorem}
\label{plm}The sequence $\left(  (\mathcal{P}_{as(p;q)}^{n,ev},\Vert
.\Vert_{ev^{(2)}(p;q)}),(\mathcal{L}_{as(p;q)}^{n,ev},\Vert.\Vert
_{ev^{(2)}(p;q)})\right)  _{n=1}^{\infty}$ is coherent and compatible with
$\Pi_{p,q}$.
\end{theorem}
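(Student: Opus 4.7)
The plan is to verify the five coherence conditions (CH 1)--(CH 5) of Definition \ref{d2} directly from the propositions established earlier in this section, and then invoke Remark \ref{yv} to pass from coherence to compatibility with $\Pi_{p,q}$. Since at $n=1$ the everywhere $(p;q)$-summing condition collapses to the classical absolutely $(p;q)$-summing condition (there is a single nonzero term in the defining sum and the product reduces to $\|a\|+\|(x_j)\|_{w,q}$), we have $\mathcal{U}_1=\mathcal{M}_1=\Pi_{p,q}$, so the indexing convention required by Definitions \ref{d1} and \ref{d2} is met.

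Next I would identify each coherence axiom with a proposition already proved: (CH 1) is the proposition bounding $\|T_{a_i}\|_{ev^{(2)}(p;q)}\leq\|T\|_{ev^{(2)}(p;q)}\|a_i\|$ for $T\in\mathcal{L}_{as(p;q)}^{n+1,ev}$; (CH 2) is Proposition \ref{pew}; (CH 3) is Proposition \ref{mult1}; (CH 4) is Proposition \ref{peww}; and (CH 5) is Proposition \ref{bhy}. The decisive point is that in each of these estimates the constant is exactly $1$, so one obtains coherence with $\beta_1=\beta_2=\beta_3=\beta_4=1$.

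To conclude compatibility I would invoke the second half of Remark \ref{yv}: when all coherence constants equal $1$, iterating (CH 1)--(CH 4) $n-1$ times produces (CP 1)--(CP 4) with universal constants $\alpha_i=1$ independent of $n$, and (CP 5) is just Proposition \ref{bhy} again. I expect no genuine obstacle here; the only point requiring a moment of care is checking that iteration of (CH 1) in different coordinates really produces (CP 1) for an arbitrary choice of the distinguished variable $k$, but this follows from applying (CH 1) successively to the $n-1$ indices $j\neq k$, the product of unit constants remaining $1$. The substantive mathematical content was absorbed entirely by the preparatory propositions; the present statement only consolidates them into the language of Definitions \ref{d1} and \ref{d2}, and in doing so highlights exactly the distinction with the family $(\mathcal{P}_{as(p,q)}^n,\|\cdot\|_{as(p;q)})$, which fails coherence because its norm does not control the operator-ideal norm at the $n=1$ level uniformly in $n$.
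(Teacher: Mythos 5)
Your proposal is correct and follows essentially the same route as the paper: the paper likewise derives Theorem~\ref{plm} directly from the preceding propositions (which give (CH~1)--(CH~5) with all constants equal to $1$) together with Remark~\ref{yv} to pass from coherence to compatibility with $\Pi_{p,q}$. Your identification of each axiom with its proposition, and the observation that $\mathcal{U}_{1}=\mathcal{M}_{1}=\Pi_{p,q}$ because linearity makes the everywhere-summing condition collapse to the classical one, match the intended argument.
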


\section{Strongly summing multilinear operators and polynomials\label{jt}}

The multi-ideal of strongly $p$-summing multilinear operators is one of the
classes that best inherits the spirit of the ideal of absolutely $p$-summing
linear operators (for papers comparing the different nonlinear extensions of
absolutely summing operators we refer to \cite{rm, QM, davidarchiv}).

If $p\geq1,$ $T\in\mathcal{L}(E_{1},...,E_{n};F)$ is strongly $p$-summing
($T\in\mathcal{L}\Pi_{p}^{n,\text{str}}(E_{1},...,E_{n};F)$) if there exists a
constant $C\geq0$ such that
\begin{equation}
\left(  \sum\limits_{j=1}^{m}\parallel T(x_{j}^{(1)},...,x_{j}^{(n)}%
)\parallel^{p}\right)  ^{1/p}\leq C\left(  \underset{\phi\in B_{\mathcal{L}%
(E_{1},...,E_{n};\mathbb{K})}}{\sup}\sum\limits_{j=1}^{m}\mid\phi(x_{j}%
^{(1)},...,x_{j}^{(n)})\mid^{p}\right)  ^{1/p}. \label{rp}%
\end{equation}
for all $m\in\mathbb{N}$, $x_{j}^{(l)}\in E_{l}$ with $l=1,...,n$ and
$j=1,...,m.$ The infimum of all $C\geq0$ satisfying (\ref{rp}) defines a
complete norm, denoted by $\left\Vert .\right\Vert _{\mathcal{L}\Pi
_{p,q}^{n,\text{str}}},$ on the space $\mathcal{L}\Pi_{p,q}^{n,\text{str}%
}(E_{1},...,E_{n};F)$.

This concept is due to V. Dimant \cite{dimant}. In the same paper the author
proposes a definition for the polynomial case, but as mentioned in
\cite{CDM09} this concept does not generate a polynomial ideal compatible with
$\Pi_{p}$.

It is easy to prove that the ideal of strongly $p$-summing multilinear
operators is closed under differentiation, closed for scalar multiplication.
Besides, for this class we have a Grothendieck-type theorem and a
Pietsch-Domination type theorem:

\begin{theorem}
(\cite{dimant}) If $n\geq2$, then
\[
\mathcal{L}(^{n}\ell_{1};\ell_{2})=\mathcal{L}\Pi_{1,1}^{n,\text{str}}%
(^{n}\ell_{1};\ell_{2}).
\]

\end{theorem}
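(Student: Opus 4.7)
The plan is to reduce the statement to the classical Grothendieck theorem via linearization. Since $\mathcal{L}\Pi_{1,1}^{n,\text{str}}(^{n}\ell_{1};\ell_{2})\subseteq\mathcal{L}(^{n}\ell_{1};\ell_{2})$ holds by definition, only the reverse inclusion requires proof.

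Given $T\in\mathcal{L}(^{n}\ell_{1};\ell_{2})$, the universal property of the projective tensor product produces a unique continuous linear operator $\widetilde{T}\colon \ell_{1}\widehat{\otimes}_{\pi}\cdots\widehat{\otimes}_{\pi}\ell_{1}\to\ell_{2}$ (with $n$ factors) satisfying $\widetilde{T}(x^{(1)}\otimes\cdots\otimes x^{(n)})=T(x^{(1)},\ldots,x^{(n)})$, and with $\|\widetilde{T}\|=\|T\|$. A standard computation shows that the $n$-fold iterated projective tensor product of copies of $\ell_{1}$ is isometrically isomorphic to $\ell_{1}(\mathbb{N}^{n})$, hence isomorphic to $\ell_{1}$.

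Since $\widetilde{T}$ can thus be regarded as a continuous linear operator from $\ell_{1}$ into the Hilbert space $\ell_{2}$, Grothendieck's Theorem ensures that $\widetilde{T}$ is absolutely $1$-summing. Recalling the canonical isometric identification
$$\bigl(\ell_{1}\widehat{\otimes}_{\pi}\cdots\widehat{\otimes}_{\pi}\ell_{1}\bigr)^{\ast}\;\cong\;\mathcal{L}(\ell_{1},\ldots,\ell_{1};\mathbb{K}),$$
and applying the absolutely $1$-summing inequality for $\widetilde{T}$ to the elementary tensors $z_{j}=x_{j}^{(1)}\otimes\cdots\otimes x_{j}^{(n)}$, we obtain
$$\sum_{j=1}^{m}\bigl\|T(x_{j}^{(1)},\ldots,x_{j}^{(n)})\bigr\|\;\leq\; C\sup_{\phi\in B_{\mathcal{L}(\ell_{1},\ldots,\ell_{1};\mathbb{K})}}\sum_{j=1}^{m}\bigl|\phi(x_{j}^{(1)},\ldots,x_{j}^{(n)})\bigr|,$$
which is precisely condition \eqref{rp} for $p=1$. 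Hence $T\in\mathcal{L}\Pi_{1,1}^{n,\text{str}}(^{n}\ell_{1};\ell_{2})$.

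The only genuinely deep ingredient is Grothendieck's inequality itself; the remaining steps are formal manipulations involving the projective tensor product (universal property, isometric identification of its dual with the space of continuous multilinear forms, and the isomorphism $\ell_{1}\widehat{\otimes}_{\pi}\ell_{1}\cong\ell_{1}$ iterated $n-1$ times). The main point requiring care is the last translation: one must verify that testing $\widetilde{T}$ on elementary tensors already produces the strongly summing inequality, which follows because the supremum of $\sum_{j}|\psi(z_{j})|$ over $\psi$ in the dual unit ball agrees, by the dual identification, with the supremum over $\phi\in B_{\mathcal{L}(\ell_{1},\ldots,\ell_{1};\mathbb{K})}$ appearing in \eqref{rp}.
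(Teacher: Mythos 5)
Your proof is correct and follows essentially the same route as the cited source \cite{dimant} (the paper itself only quotes the result without proof): linearize $T$ on the projective tensor product, use the isometry $\ell_{1}\widehat{\otimes}_{\pi}\cdots\widehat{\otimes}_{\pi}\ell_{1}\cong\ell_{1}(\mathbb{N}^{n})\cong\ell_{1}$, apply Grothendieck's theorem to $\widetilde{T}\colon\ell_{1}\to\ell_{2}$, and translate the $1$-summing inequality back through the identification of $(\ell_{1}\widehat{\otimes}_{\pi}\cdots\widehat{\otimes}_{\pi}\ell_{1})^{\ast}$ with $\mathcal{L}(\ell_{1},\ldots,\ell_{1};\mathbb{K})$. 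All steps, including the final translation on elementary tensors, are sound.
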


\begin{theorem}
(\cite{dimant}) $T\in\mathcal{L}\left(  E_{1},...,E_{n};F\right)  $ is
strongly $p$-summing if, and only if, there are a probability measure $\mu$ on
$B_{(E_{1}\otimes_{\pi}\cdots\otimes_{\pi}E_{n})^{\ast}}$, with the weak-star
topology, and a constant $C\geq0$ so that
\begin{equation}
\left\Vert T\left(  x_{1},...,x_{n}\right)  \right\Vert \leq C\left(
\int_{B_{(E_{1}\otimes_{\pi}\cdots\otimes_{\pi}E_{n})^{\ast}}}\left\vert
\varphi\left(  x_{1}\otimes\cdots\otimes x_{n}\right)  \right\vert ^{p}%
d\mu\left(  \varphi\right)  \right)  ^{1/p} \label{7out08c}%
\end{equation}
for all $(x_{1},...,x_{n})\in E_{1}\times\cdots\times E_{n}.$
\end{theorem}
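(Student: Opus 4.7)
My plan is to handle the two implications separately, reducing the problem via the universal property of the projective tensor product to a classical Pietsch-type domination argument on a weak-star compact space.

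For the sufficient direction, assume the integral domination (\ref{7out08c}) holds. Given finite sequences $(x_{j}^{(l)})_{j=1}^{m}\subset E_{l}$, I would raise (\ref{7out08c}) to the $p$-th power at $(x_{j}^{(1)},\ldots,x_{j}^{(n)})$, sum over $j$, exchange the sum with the integral, and bound the integral by the supremum over $K:=B_{(E_{1}\otimes_{\pi}\cdots\otimes_{\pi}E_{n})^{\ast}}$ equipped with the weak-star topology. The canonical isometric isomorphism $\mathcal{L}(E_{1},\ldots,E_{n};\mathbb{K})\cong (E_{1}\otimes_{\pi}\cdots\otimes_{\pi}E_{n})^{\ast}$ then identifies the resulting right-hand side with
\begin{equation*}
C\bigl(\sup_{\phi\in B_{\mathcal{L}(E_{1},\ldots,E_{n};\mathbb{K})}}\sum_{j=1}^{m}|\phi(x_{j}^{(1)},\ldots,x_{j}^{(n)})|^{p}\bigr)^{1/p},
\end{equation*}
which is precisely the strongly $p$-summing inequality (\ref{rp}).

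For the necessary direction, the same isometry lets me rewrite (\ref{rp}) as
\begin{equation*}
\sum_{j=1}^{m}\|T(x_{j}^{(1)},\ldots,x_{j}^{(n)})\|^{p}\leq C^{p}\sup_{\varphi\in K}\sum_{j=1}^{m}|\varphi(x_{j}^{(1)}\otimes\cdots\otimes x_{j}^{(n)})|^{p}.
\end{equation*}
I would then set up the standard Pietsch--Hahn--Banach machinery inside $C(K)$: noting that $K$ is weak-star compact by Banach--Alaoglu and that each elementary tensor induces a weak-star continuous evaluation on $K$, introduce the set $\mathcal{Q}\subset C(K)$ of functions
\begin{equation*}
g(\varphi)=\sum_{j=1}^{m}\Bigl(\|T(x_{j}^{(1)},\ldots,x_{j}^{(n)})\|^{p}-C^{p}|\varphi(x_{j}^{(1)}\otimes\cdots\otimes x_{j}^{(n)})|^{p}\Bigr)
\end{equation*}
as $m$ and $(x_{j}^{(l)})$ vary freely. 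The displayed inequality forces $\min_{\varphi\in K}g(\varphi)\leq 0$ for every $g\in\mathcal{Q}$ (the minimum is attained by compactness of $K$), while concatenation of finite sequences plus rescaling (exploiting $n$-linearity of $T$) show that $\mathcal{Q}$ is a convex cone.

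The main obstacle is the Hahn--Banach separation step. Since every $g\in\mathcal{Q}$ attains a non-positive value on $K$, the convex cone $\mathcal{Q}$ is disjoint from the open convex cone $\{f\in C(K):f>0\}$; separating them produces a nonzero positive continuous linear functional on $C(K)$, which by the Riesz representation theorem is integration against a positive finite Borel measure, and after normalisation becomes a probability $\mu$ on $K$ with $\int_{K}g\,d\mu\leq 0$ for every $g\in\mathcal{Q}$. Specialising to $m=1$ and arbitrary $(x_{1},\ldots,x_{n})\in E_{1}\times\cdots\times E_{n}$ yields precisely the domination (\ref{7out08c}). A slicker route is to invoke an abstract Pietsch-type theorem (e.g.\ in the Pellegrino--Santos framework), whose generic hypotheses reduce to exactly the convex-cone and sup-of-evaluations structure verified above.
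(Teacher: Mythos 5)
The paper offers no proof of this theorem --- it is quoted verbatim from Dimant's paper \cite{dimant} --- so there is no internal argument to compare against; I can only assess your proof on its own terms, and it is correct. It is the classical Pietsch domination argument transported to the projective tensor product: the isometry $\mathcal{L}(E_{1},\ldots,E_{n};\mathbb{K})\cong(E_{1}\otimes_{\pi}\cdots\otimes_{\pi}E_{n})^{\ast}$ converts the supremum in (\ref{rp}) into a supremum of weak-star continuous functions on the weak-star compact ball $K$; the sufficiency direction is the routine ``raise to the $p$-th power, sum, and majorise the integral by the supremum'' computation; and the necessity direction is the Hahn--Banach/Riesz separation of the convex cone $\mathcal{Q}$ from the open cone of strictly positive functions in $C(K)$. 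Two small points are worth making explicit, and both check out: (i) the supremum defining (\ref{rp}) is attained on $K$ (so that $\min_{K}g\leq0$, not merely $\inf_{K}g\leq0$), because $\varphi\mapsto\sum_{j}|\varphi(x_{j}^{(1)}\otimes\cdots\otimes x_{j}^{(n)})|^{p}$ is weak-star continuous on the compact $K$; and (ii) the cone property of $\mathcal{Q}$ under positive scalars comes from replacing $x_{j}^{(1)}$ by $\lambda^{1/p}x_{j}^{(1)}$, which scales every summand of $g$ by $\lambda$ --- this is the ``rescaling exploiting $n$-linearity'' you allude to, and together with concatenation it makes $\mathcal{Q}$ a convex cone. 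Your closing remark is also accurate: the statement is an instance of the abstract domination theorems of \cite{jmaa1, ps2}, and in substance your argument coincides with the one in the cited source.
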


As a consequence, there is an Inclusion Theorem (if $p\leq q$ then
$\mathcal{L}\Pi_{p}^{n,\text{str}}\subset\mathcal{L}\Pi_{q}^{n,\text{str}}$).
It is worth mentioning that even the fashionable multi-ideal of multiple
summing multilinear operators (see \cite{Collect}) does not have all these
properties (for example, P\'{e}rez-Garc\'{\i}a \cite{d4} proved that the
inclusion theorem is not valid, in general). In the recent paper \cite{QM} a
notion of \textquotedblleft desired generalisation of absolutely summing
operators\textquotedblright\ to the multilinear setting was discussed and the
class of strongly $p$-summing multilinear operators seemed to be one of the
\textquotedblleft closest to perfection\textquotedblright.

We will choose a concept of strongly $p$-summing polynomials different from
the one from \cite{CDM09, dimant}; we will consider that $P\in\mathcal{P}%
(^{n}E;F)$ is strongly $p$-summing (notation $\mathcal{P}\Pi_{p}%
^{n,\text{str}}$) if $\overset{\vee}{P}$ belongs to $\mathcal{L}\Pi
_{p}^{n,\text{str}}(^{n}E;F)$ and
\[
\left\Vert P\right\Vert _{\mathcal{P}\Pi_{p}^{n,\text{str}}}:=\left\Vert
\overset{\vee}{P}\right\Vert _{\mathcal{L}\Pi_{p}^{n,\text{str}}}.
\]
So, $\left(  \mathcal{P}\Pi_{p}^{n,\text{str}},\left\Vert .\right\Vert
_{\mathcal{P}\Pi_{p}^{n,\text{str}}}\right)  _{n=1}^{\infty}$ is a Banach
polynomial ideal and it is easy to prove that the sequence $\left(
\mathcal{P}\Pi_{p}^{n,\text{str}},\left\Vert .\right\Vert _{\mathcal{P}\Pi
_{p}^{n,\text{str}}}\right)  _{n=1}^{\infty}$ is a global holomorphy type. As
we will see in the forthcoming Theorem \ref{joi}, the sequence $\left(
\left(  \mathcal{P}\Pi_{p}^{n,\text{str}},\left\Vert .\right\Vert
_{\mathcal{P}\Pi_{p}^{n,\text{str}}}\right)  ,\left(  \mathcal{L}\Pi
_{p}^{n,\text{str}},\left\Vert .\right\Vert _{\mathcal{L}\Pi_{p}%
^{n,\text{str}}}\right)  \right)  _{n=1}^{\infty}$ is coherent and compatible
with $\Pi_{p}$, and this result is quite adequate in view of the very good
properties of the ideal of strongly summing multilinear mappings. For its
proof we need to prove four simple propositions:

\begin{proposition}
\label{coerente2} Let $k\in\left\{  1,\ldots,n+1\right\}  $. If $T\in
\mathcal{L}\Pi_{p}^{n+1,\text{str}}\left(  E_{1},\ldots,E_{n+1};F\right)  $
and $a_{k}\in E_{k}$, then
\[
T_{a_{k}}\in\mathcal{L}\Pi_{p}^{n,\text{str}}\left(  E_{1},\ldots
,E_{n};F\right)  \text{ and }\left\Vert T_{a_{k}}\right\Vert _{\mathcal{L}%
\Pi_{p}^{n,\text{str}}}\leq\left\Vert a_{k}\right\Vert \left\Vert T\right\Vert
_{\mathcal{L}\Pi_{p}^{n+1,\text{str}}}.
\]

\end{proposition}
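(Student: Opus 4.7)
The strategy is a direct unfolding of the definition: feed the constant sequence $(a_{k})_{j=1}^{m}$ in slot $k$ and sequences $(x_{j}^{(l)})_{j=1}^{m}$ in the other slots into the strongly $p$-summing inequality for $T$, and then match the resulting supremum (over $(n+1)$-linear forms) with the supremum (over $n$-linear forms) defining the strongly $p$-summing norm of $T_{a_{k}}$.

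\medskip

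First, I would fix $m\in\mathbb{N}$ and sequences $\bigl(x_{j}^{(l)}\bigr)_{j=1}^{m}\subset E_{l}$ for $l\in\{1,\dots,n+1\}\setminus\{k\}$, and apply the definition of strongly $p$-summing to $T$ (using the constant sequence $x_{j}^{(k)}=a_{k}$). This yields
\[
\left(\sum_{j=1}^{m}\bigl\|T_{a_{k}}\bigl(x_{j}^{(1)},\dots,x_{j}^{(k-1)},x_{j}^{(k+1)},\dots,x_{j}^{(n+1)}\bigr)\bigr\|^{p}\right)^{1/p}
\leq \|T\|_{\mathcal{L}\Pi_{p}^{n+1,\text{str}}}\,S,
\]
where
\[
S=\sup_{\phi\in B_{\mathcal{L}(E_{1},\dots,E_{n+1};\mathbb{K})}}\left(\sum_{j=1}^{m}\bigl|\phi\bigl(x_{j}^{(1)},\dots,x_{j}^{(k-1)},a_{k},x_{j}^{(k+1)},\dots,x_{j}^{(n+1)}\bigr)\bigr|^{p}\right)^{1/p}.
\]

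\medskip

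The key observation (which is the only substantive step) is that, for every $\phi\in\mathcal{L}(E_{1},\dots,E_{n+1};\mathbb{K})$, the slicing
\[
\psi_{\phi,a_{k}}(y_{1},\dots,y_{k-1},y_{k+1},\dots,y_{n+1}):=\phi(y_{1},\dots,y_{k-1},a_{k},y_{k+1},\dots,y_{n+1})
\]
defines an $n$-linear form on $E_{1}\times\cdots\times E_{k-1}\times E_{k+1}\times\cdots\times E_{n+1}$ with $\|\psi_{\phi,a_{k}}\|\leq\|a_{k}\|\,\|\phi\|$. Assuming $a_{k}\neq 0$ (the case $a_{k}=0$ being trivial since $T_{a_{k}}\equiv 0$), the map $\phi\mapsto\psi_{\phi,a_{k}}/\|a_{k}\|$ sends $B_{\mathcal{L}(E_{1},\dots,E_{n+1};\mathbb{K})}$ into $B_{\mathcal{L}(E_{1},\dots,E_{k-1},E_{k+1},\dots,E_{n+1};\mathbb{K})}$. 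Consequently,
\[
S\leq \|a_{k}\|\sup_{\psi\in B_{\mathcal{L}(E_{1},\dots,E_{k-1},E_{k+1},\dots,E_{n+1};\mathbb{K})}}\left(\sum_{j=1}^{m}\bigl|\psi\bigl(x_{j}^{(1)},\dots,x_{j}^{(k-1)},x_{j}^{(k+1)},\dots,x_{j}^{(n+1)}\bigr)\bigr|^{p}\right)^{1/p}.
\]

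\medskip

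Combining the two displayed inequalities shows that $T_{a_{k}}$ satisfies the defining inequality of strongly $p$-summing multilinear operators with constant $\|a_{k}\|\,\|T\|_{\mathcal{L}\Pi_{p}^{n+1,\text{str}}}$, which proves both membership and the announced norm estimate. There is no real obstacle here: the only thing that needs care is the bookkeeping of indices around the omitted slot $k$, and the elementary norm bound $\|\psi_{\phi,a_{k}}\|\leq\|a_{k}\|\,\|\phi\|$; everything else is a direct application of the definitions.
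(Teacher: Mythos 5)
Your proof is correct and follows essentially the same route as the paper's: plug the constant sequence $a_{k}$ into slot $k$ of the defining inequality for $T$, then dominate the supremum over $(n+1)$-linear forms by $\|a_{k}\|$ times the supremum over $n$-linear forms via the slicing $\phi\mapsto\phi(\cdot,\dots,a_{k},\dots,\cdot)$. The paper normalizes $a_{k}$ inside the supremum rather than rescaling the sliced form afterwards, but this is only a cosmetic difference.
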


\begin{proof}
The case $a_{k}=0$ is immediate. Let us suppose $a_{k}\neq0.$ We just have to
note that%
\begin{align*}
&  \left(  \sum_{j=1}^{m}\left\Vert T_{a_{k}}\left(  x_{j}^{\left(  1\right)
},\ldots,x_{j}^{\left(  k-1\right)  },x_{j}^{\left(  k+1\right)  }%
,\ldots,x_{j}^{\left(  n\right)  }\right)  \right\Vert ^{p}\right)  ^{1/p}\\
&  =\left(  \sum_{j=1}^{m}\left\Vert T\left(  x_{j}^{\left(  1\right)
},\ldots,x_{j}^{\left(  k-1\right)  },a_{k},x_{j}^{\left(  k+1\right)
},\ldots,x_{j}^{\left(  n\right)  }\right)  \right\Vert ^{p}\right)  ^{1/p}\\
&  \leq\left\Vert T\right\Vert _{\mathcal{L}\Pi_{p}^{n+1,\text{str}}%
}\left\Vert a_{k}\right\Vert \sup_{\varphi\in B_{\mathcal{L}(E_{1}%
,\ldots,E_{n};\mathbb{K})}}\left(  \sum_{j=1}^{m}\left\vert \varphi\left(
x_{j}^{\left(  1\right)  },\ldots,x_{j}^{\left(  k-1\right)  },\frac{a_{k}%
}{\left\Vert a_{k}\right\Vert },x_{j}^{\left(  k+1\right)  },\ldots
,x_{j}^{\left(  n\right)  }\right)  \right\vert ^{p}\right)  ^{1/p}\\
&  \leq\left\Vert T\right\Vert _{\mathcal{L}\Pi_{p}^{n+1,\text{str}}%
}\left\Vert a_{k}\right\Vert \sup_{\varphi\in B_{\mathcal{L}(E_{1}%
,\ldots,E_{k-1},E_{k+1},\ldots,E_{n+1};\mathbb{K})}}\left(  \sum_{j=1}%
^{m}\left\vert \varphi\left(  x_{j}^{\left(  1\right)  },\ldots,x_{j}^{\left(
k-1\right)  },x_{j}^{\left(  k+1\right)  },\ldots,x_{j}^{\left(  n\right)
}\right)  \right\vert ^{p}\right)  ^{1/p}.
\end{align*}

\end{proof}

\begin{proposition}
\label{coerente4} If $T\in\mathcal{L}\Pi_{p}^{n,\text{str}}\left(
E_{1},\ldots,E_{n};F\right)  $ and $\gamma\in E_{n+1}^{\ast}$, then
\[
\gamma T\in\mathcal{L}\Pi_{p}^{n+1,\text{str}}\left(  E_{1},\ldots
,E_{n+1};F\right)  \text{ and }\left\Vert \gamma T\right\Vert _{\mathcal{L}%
\Pi_{p}^{n+1,\text{str}}}\leq\left\Vert \gamma\right\Vert \left\Vert
T\right\Vert _{\mathcal{L}\Pi_{p}^{n,\text{str}}}.
\]

\end{proposition}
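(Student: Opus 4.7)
My plan is to reduce the $(n+1)$-linear statement to the $n$-linear summing property of $T$ by absorbing the scalar $\gamma(x_{j}^{(n+1)})$ into one of the other slots, exploiting $n$-linearity of $T$. Specifically, writing $(\gamma T)(x_{1},\ldots,x_{n+1})=\gamma(x_{n+1})T(x_{1},\ldots,x_{n})$, I use the identity
\[
\gamma(x_{j}^{(n+1)})\,T(x_{j}^{(1)},\ldots,x_{j}^{(n)})=T\bigl(x_{j}^{(1)},\ldots,x_{j}^{(n-1)},\gamma(x_{j}^{(n+1)})x_{j}^{(n)}\bigr),
\]
which follows from linearity of $T$ in the last slot. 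Applied to arbitrary finite sequences $(x_{j}^{(k)})_{j=1}^{m}\subset E_{k}$ for $k=1,\ldots,n+1$, this converts the sum to one controlled by $\|T\|_{\mathcal{L}\Pi_{p}^{n,\mathrm{str}}}$.

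Next I invoke the defining inequality for $T\in\mathcal{L}\Pi_{p}^{n,\mathrm{str}}$ applied to the modified sequences $(x_{j}^{(1)},\ldots,x_{j}^{(n-1)},\gamma(x_{j}^{(n+1)})x_{j}^{(n)})$, obtaining
\[
\Bigl(\sum_{j=1}^{m}\|\gamma(x_{j}^{(n+1)})T(x_{j}^{(1)},\ldots,x_{j}^{(n)})\|^{p}\Bigr)^{1/p}\leq \|T\|_{\mathcal{L}\Pi_{p}^{n,\mathrm{str}}}\sup_{\phi\in B_{\mathcal{L}(E_{1},\ldots,E_{n};\mathbb{K})}}\Bigl(\sum_{j=1}^{m}|\gamma(x_{j}^{(n+1)})|^{p}|\phi(x_{j}^{(1)},\ldots,x_{j}^{(n)})|^{p}\Bigr)^{1/p}.
\]

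The last step is to recognise that for any $\phi\in B_{\mathcal{L}(E_{1},\ldots,E_{n};\mathbb{K})}$, the $(n+1)$-linear form $\psi(x_{1},\ldots,x_{n+1}):=\|\gamma\|^{-1}\gamma(x_{n+1})\phi(x_{1},\ldots,x_{n})$ (assuming $\gamma\neq 0$; the case $\gamma=0$ is trivial) lies in $B_{\mathcal{L}(E_{1},\ldots,E_{n+1};\mathbb{K})}$, since $\|\psi\|\leq\|\phi\|\leq 1$. Pulling $\|\gamma\|$ out of the sum on the right and passing to the supremum over all $\psi\in B_{\mathcal{L}(E_{1},\ldots,E_{n+1};\mathbb{K})}$ yields the bound
\[
\Bigl(\sum_{j=1}^{m}\|(\gamma T)(x_{j}^{(1)},\ldots,x_{j}^{(n+1)})\|^{p}\Bigr)^{1/p}\leq\|\gamma\|\,\|T\|_{\mathcal{L}\Pi_{p}^{n,\mathrm{str}}}\sup_{\psi\in B_{\mathcal{L}(E_{1},\ldots,E_{n+1};\mathbb{K})}}\Bigl(\sum_{j=1}^{m}|\psi(x_{j}^{(1)},\ldots,x_{j}^{(n+1)})|^{p}\Bigr)^{1/p},
\]
which by the definition of the strongly $p$-summing norm proves both membership $\gamma T\in\mathcal{L}\Pi_{p}^{n+1,\mathrm{str}}$ and the norm estimate.

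The only subtle step is the last one: one might initially try to split $|\gamma(x_{j}^{(n+1)})|^{p}|\phi(\cdots)|^{p}$ by taking separate suprema, losing the correlation between the two factors. The correct move is instead to package them into a single $(n+1)$-linear functional $\psi$ of norm $\leq\|\gamma\|$, so that the supremum is taken over a legitimate unit ball of $\mathcal{L}(E_{1},\ldots,E_{n+1};\mathbb{K})$. Everything else is routine.
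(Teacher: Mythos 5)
Your proof is correct and follows essentially the same route as the paper: absorb the scalar $\gamma(x_j^{(n+1)})$ into one slot of $T$ by multilinearity, apply the defining strongly $p$-summing inequality, and then repackage $\|\gamma\|^{-1}\gamma\cdot\phi$ as an $(n+1)$-linear form in the unit ball to pass to the supremum over $B_{\mathcal{L}(E_1,\ldots,E_{n+1};\mathbb{K})}$. The "subtle step" you flag is exactly the second inequality in the paper's displayed chain, so nothing is missing.
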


\begin{proof}
Since $T\in\mathcal{L}\Pi_{p}^{n,\text{str}}\left(  E_{1},\ldots
,E_{n};F\right)  $, we have
\begin{align*}
&  \left(  \sum\limits_{j=1}^{m}\parallel T(x_{j}^{(1)},...,x_{j}^{(n)}%
)\gamma\left(  x_{j}^{(n+1}\right)  \parallel^{p}\right)  ^{1/p}\\
&  \leq\left\Vert T\right\Vert _{\mathcal{L}\Pi_{p}^{n,\text{str}}}\left(
\underset{\phi\in B_{\mathcal{L}(E_{1},...,E_{n};\mathbb{K})}}{\sup}%
\sum\limits_{j=1}^{m}\mid\phi(x_{j}^{(1)},...,x_{j}^{(n)})\gamma\left(
x_{j}^{(n+1}\right)  \mid^{p}\right)  ^{1/p}\\
&  \leq\left\Vert \gamma\right\Vert \left\Vert T\right\Vert _{\mathcal{L}%
\Pi_{p}^{n,\text{str}}}\left(  \underset{\psi\in B_{\mathcal{L}(E_{1}%
,...,E_{n+1};\mathbb{K})}}{\sup}\sum\limits_{j=1}^{m}\mid\psi(x_{j}%
^{(1)},...,x_{j}^{(n+1)})\mid^{p}\right)  ^{1/p}%
\end{align*}
and the proof is done.
\end{proof}

\begin{proposition}
\label{coerente1} If $P\in\mathcal{P}\Pi_{p}^{n+1,\text{str}}\left(
^{n+1}E;F\right)  $ and $a\in E$, then $P_{a}$ belongs to $\mathcal{P}\Pi
_{p}^{n,\text{str}}\left(  ^{n}E;F\right)  $ and
\[
\left\Vert P_{a}\right\Vert _{\mathcal{P}\Pi_{p}^{n,\text{str}}}\leq\left\Vert
a\right\Vert \left\Vert \overset{\vee}{P}\right\Vert _{\mathcal{L}\Pi
_{p}^{n+1,\text{str}}}.
\]

\end{proposition}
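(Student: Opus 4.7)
The plan is to reduce the polynomial statement to the already-proved multilinear Proposition \ref{coerente2}, using only the definition of the norm $\left\Vert \cdot\right\Vert _{\mathcal{P}\Pi_{p}^{n,\text{str}}}$ and a standard identification of associated symmetric multilinear forms.

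First I would verify the identity
\[
(P_{a})^{\vee}(x_{1},\ldots,x_{n})=\overset{\vee}{P}(a,x_{1},\ldots,x_{n})=(\overset{\vee}{P})_{a_{1}}(x_{1},\ldots,x_{n}),
\]
where on the right we use the partial-evaluation notation $T_{a_{j}}$ introduced in Section \ref{yyre}. Indeed, the right-hand side is $n$-linear and symmetric in $(x_{1},\ldots,x_{n})$ (the symmetry of $\overset{\vee}{P}$ is preserved after freezing one slot), and on the diagonal it reproduces $P_{a}(x)=\overset{\vee}{P}(a,x,\ldots,x)$; uniqueness of the symmetric $n$-linear form associated to an $n$-homogeneous polynomial forces equality with $(P_{a})^{\vee}$.

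Second, I would apply Proposition \ref{coerente2} with $k=1$, $a_{1}=a$, to the $(n+1)$-linear map $\overset{\vee}{P}\in\mathcal{L}\Pi_{p}^{n+1,\text{str}}(E_{1},\ldots,E_{n+1};F)$ (with all $E_{j}=E$), which yields $(\overset{\vee}{P})_{a_{1}}\in\mathcal{L}\Pi_{p}^{n,\text{str}}(^{n}E;F)$ together with the estimate
\[
\bigl\Vert (\overset{\vee}{P})_{a_{1}}\bigr\Vert _{\mathcal{L}\Pi_{p}^{n,\text{str}}}\leq\left\Vert a\right\Vert \bigl\Vert \overset{\vee}{P}\bigr\Vert _{\mathcal{L}\Pi_{p}^{n+1,\text{str}}}.
\]

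Finally, invoking the definition $\left\Vert Q\right\Vert _{\mathcal{P}\Pi_{p}^{n,\text{str}}}:=\Vert \overset{\vee}{Q}\Vert _{\mathcal{L}\Pi_{p}^{n,\text{str}}}$ with $Q=P_{a}$ and chaining it with the previous two displays gives the desired conclusion. There is no serious obstacle here: the entire argument hinges on correctly identifying $(P_{a})^{\vee}$ with the partial evaluation of $\overset{\vee}{P}$, after which Proposition \ref{coerente2} does all the real work.
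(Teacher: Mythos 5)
Your argument is correct and is essentially identical to the paper's own proof: both reduce to Proposition \ref{coerente2} applied to $\overset{\vee}{P}$ and then use the identity $(P_{a})^{\vee}=\overset{\vee}{P}_{a}$ together with the definition $\Vert P_{a}\Vert_{\mathcal{P}\Pi_{p}^{n,\text{str}}}=\Vert(P_{a})^{\vee}\Vert_{\mathcal{L}\Pi_{p}^{n,\text{str}}}$. The only difference is that you spell out the verification of that identity, which the paper leaves implicit.
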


\begin{proof}
Since $\overset{\vee}{P}\in\mathcal{L}\Pi_{p}^{n+1,\text{str}}\left(
^{n}E;F\right)  ,$ from Proposition \ref{coerente2} we have
\[
\overset{\vee}{P}_{a}\in\mathcal{L}\Pi_{p}^{n,\text{str}}\left(
^{n}E;F\right)
\]
and
\[
\left\Vert \overset{\vee}{P}_{a}\right\Vert _{\mathcal{L}\Pi_{p}%
^{n,\text{str}}}\leq\left\Vert a\right\Vert \left\Vert \overset{\vee}%
{P}\right\Vert _{\mathcal{L}\Pi_{p}^{n+1,\text{str}}}.
\]
Hence
\[
\left\Vert P_{a}\right\Vert _{\mathcal{P}\Pi_{p}^{n,\text{str}}}=\left\Vert
\left(  P_{a}\right)  ^{\vee}\right\Vert _{\mathcal{L}\Pi_{p}^{n,\text{str}}%
}=\left\Vert \overset{\vee}{P}_{a}\right\Vert _{\mathcal{L}\Pi_{p}%
^{n,\text{str}}}\leq\left\Vert a\right\Vert \left\Vert \overset{\vee}%
{P}\right\Vert _{\mathcal{L}\Pi_{p}^{n+1,\text{str}}}.
\]

\end{proof}

\begin{proposition}
\label{coerente3} If $P\in\mathcal{P}\Pi_{p}^{n,\text{str}}\left(
^{n}E;F\right)  $ and $\gamma\in E^{\ast}$, then $\gamma P$ belongs to
$\mathcal{P}\Pi_{p}^{n+1,\text{str}}\left(  ^{n+1}E;F\right)  $ and
\[
\left\Vert \gamma P\right\Vert _{\mathcal{P}\Pi_{p}^{n+1,\text{str}}}%
\leq\left\Vert \gamma\right\Vert \left\Vert \overset{\vee}{P}\right\Vert
_{\mathcal{L}\Pi_{p}^{n,\text{str}}}.
\]

\end{proposition}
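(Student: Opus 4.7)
The plan is to reduce the polynomial statement to the multilinear statement already established in Proposition \ref{coerente4}, exactly in the spirit of Proposition \ref{FatoracaoCoerente2}. Since $\|\gamma P\|_{\mathcal{P}\Pi_p^{n+1,\text{str}}}$ is by definition $\|(\gamma P)^{\vee}\|_{\mathcal{L}\Pi_p^{n+1,\text{str}}}$, everything reduces to controlling the symmetrization of $\gamma P$.

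First I would write down the standard symmetrization identity
\[
(\gamma P)^{\vee}(x_1,\ldots,x_{n+1})=\frac{1}{n+1}\sum_{k=1}^{n+1}\gamma(x_k)\,\overset{\vee}{P}(x_1,\ldots,\widehat{x_k},\ldots,x_{n+1}),
\]
which already appeared in the proof of Proposition \ref{FatoracaoCoerente2}. Each summand is, up to a permutation of the input coordinates, an $(n+1)$-linear mapping of the form $\gamma\otimes\overset{\vee}{P}$ placed in the last slot, which is precisely the situation handled by Proposition \ref{coerente4}. Since $\overset{\vee}{P}\in\mathcal{L}\Pi_p^{n,\text{str}}(\,^{n}E;F)$ by hypothesis, Proposition \ref{coerente4} yields that each of these $(n+1)$ summands belongs to $\mathcal{L}\Pi_p^{n+1,\text{str}}(\,^{n+1}E;F)$ with
\[
\bigl\|\gamma(\cdot)\,\overset{\vee}{P}(\cdot,\ldots,\widehat{\,\cdot\,},\ldots,\cdot)\bigr\|_{\mathcal{L}\Pi_p^{n+1,\text{str}}}\leq\|\gamma\|\,\|\overset{\vee}{P}\|_{\mathcal{L}\Pi_p^{n,\text{str}}}.
\]

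Next I would apply the triangle inequality for the norm $\|\cdot\|_{\mathcal{L}\Pi_p^{n+1,\text{str}}}$ (available since $\mathcal{L}\Pi_p^{n+1,\text{str}}$ is a normed multi-ideal), obtaining
\[
\|(\gamma P)^{\vee}\|_{\mathcal{L}\Pi_p^{n+1,\text{str}}}\leq\frac{1}{n+1}\cdot(n+1)\cdot\|\gamma\|\,\|\overset{\vee}{P}\|_{\mathcal{L}\Pi_p^{n,\text{str}}}=\|\gamma\|\,\|\overset{\vee}{P}\|_{\mathcal{L}\Pi_p^{n,\text{str}}},
\]
which translates via Definition \ref{or} into the desired inequality for $\|\gamma P\|_{\mathcal{P}\Pi_p^{n+1,\text{str}}}$.

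The only slightly delicate point is checking that Proposition \ref{coerente4} really applies after the coordinate permutations; but the condition of being strongly $p$-summing is invariant under permutation of the arguments (the supremum in \eqref{rp} runs over the whole unit ball of $\mathcal{L}(E_1,\ldots,E_{n+1};\mathbb{K})$, and permuting the $E_j$ merely relabels that ball), so no extra constant is introduced. Consequently the argument is essentially routine once the symmetrization formula is in hand, and mirrors the factorisation-method proof in Proposition \ref{FatoracaoCoerente2} almost verbatim.
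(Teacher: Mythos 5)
Your proof is correct and follows essentially the same route as the paper's: the same symmetrization identity for $(\gamma P)^{\vee}$, the triangle inequality over the $n+1$ permuted copies of $\gamma\overset{\vee}{P}$, and the reduction to Proposition \ref{coerente4}. The paper states this more tersely (it writes $\|(\gamma P)^{\vee}\|_{\mathcal{L}\Pi_{p}^{n+1,\text{str}}}\leq\|\gamma\overset{\vee}{P}\|_{\mathcal{L}\Pi_{p}^{n+1,\text{str}}}$ directly), while you make the permutation-invariance step explicit, which is a welcome clarification rather than a divergence.
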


\begin{proof}
Since%
\[
\left(  \gamma P\right)  ^{\vee}(x_{1},...,x_{n+1})=\frac{\gamma
(x_{1})\overset{\vee}{P}(x_{2},...,x_{n+1})+\cdots+\gamma(x_{n+1}%
)\overset{\vee}{P}(x_{1},...,x_{n})}{n+1},
\]
from Proposition \ref{coerente4} we have
\[
\left\Vert \gamma P\right\Vert _{\mathcal{P}\Pi_{p}^{n+1,\text{str}}%
}=\left\Vert \left(  \gamma P\right)  ^{\vee}\right\Vert _{\mathcal{L}\Pi
_{p}^{n+1,\text{str}}}\leq\left\Vert \gamma\overset{\vee}{P}\right\Vert
_{\mathcal{L}\Pi_{p}^{n+1,\text{str}}}\leq\left\Vert \gamma\right\Vert
\left\Vert \overset{\vee}{P}\right\Vert _{\mathcal{L}\Pi_{p}^{n,\text{str}}}.
\]

\end{proof}

As in the previous sections, the following theorem is a consequence of the
previous propositions and Remark \ref{yv}:

\begin{theorem}
\label{joi}The sequence $\left(  \left(  \mathcal{P}\Pi_{p}^{n,\text{str}%
},\left\Vert .\right\Vert _{\mathcal{P}\Pi_{p}^{n,\text{str}}}\right)
,\left(  \mathcal{L}\Pi_{p}^{n,\text{str}},\left\Vert .\right\Vert
_{\mathcal{L}\Pi_{p}^{n,\text{str}}}\right)  \right)  _{n=1}^{\infty}$ is
coherent and compatible with $\Pi_{p}.$
\end{theorem}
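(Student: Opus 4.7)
The plan is to derive the theorem as a packaging of the four preceding propositions, together with the observation that $\mathcal{U}_1=\mathcal{M}_1=\Pi_p$ by construction and that condition (CH 5) of Definition \ref{d2} is built into the very definition of $\mathcal{P}\Pi_{p}^{n,\text{str}}$. Specifically, Proposition \ref{coerente2} supplies (CH 1) with $\beta_1=1$, Proposition \ref{coerente1} supplies (CH 2) with $\beta_2=1$, Proposition \ref{coerente4} supplies (CH 3) with $\beta_3=1$, and Proposition \ref{coerente3} supplies (CH 4) with $\beta_4=1$; finally (CH 5) holds tautologically because we declared $P\in\mathcal{P}\Pi_{p}^{n,\text{str}}(^{n}E;F)$ if and only if $\overset{\vee}{P}\in\mathcal{L}\Pi_{p}^{n,\text{str}}(^{n}E;F)$, with matching norms. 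This establishes coherence of the pair.

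For compatibility with $\Pi_p$, I would invoke the second sentence of Remark \ref{yv}: since the four coherence constants are all equal to $1$, coherence of $(\mathcal{U}_k,\mathcal{M}_k)_{k=1}^{\infty}$ automatically yields compatibility with $\mathcal{U}_1=\Pi_p$. Thus conditions (CP 1)--(CP 5) of Definition \ref{d1} hold with $\alpha_1=\alpha_2=\alpha_3=\alpha_4=1$.

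If one prefers to see the compatibility inequalities without appealing to Remark \ref{yv}, the proof is a straightforward induction: for (CP 1), starting from $T\in\mathcal{L}\Pi_p^{n,\text{str}}(E_1,\ldots,E_n;F)$, one applies Proposition \ref{coerente2} successively to fix the variables $a_j$ for $j\neq k$ one at a time, each step dropping the multilinearity level by one while multiplying the strong norm by $\|a_j\|$; after $n-1$ such steps one lands in $\mathcal{L}\Pi_p^{1,\text{str}}(E_k;F)=\Pi_p(E_k;F)$ with the desired bound. Analogously, (CP 2) is obtained by iterating Proposition \ref{coerente1}, (CP 3) by iterating Proposition \ref{coerente4}, and (CP 4) by iterating Proposition \ref{coerente3}. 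Condition (CP 5) is again the definition of $\mathcal{P}\Pi_p^{n,\text{str}}$.

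There is no substantive obstacle: the technical content has been distilled into Propositions \ref{coerente1}--\ref{coerente4}, and the only care required is bookkeeping in the iteration — verifying that the product of the unit constants remains $1$ at each step and that the case $k=1$ really reduces to $\Pi_p$ rather than to some formally distinct object. Both checks are immediate from the definitions.
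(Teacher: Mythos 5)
Your proposal is correct and follows exactly the paper's route: the paper proves Theorem \ref{joi} by observing that Propositions \ref{coerente2}, \ref{coerente4}, \ref{coerente1} and \ref{coerente3} give (CH 1)--(CH 4) with all constants equal to $1$, that (CH 5)/(CP 5) hold by the very definition of $\mathcal{P}\Pi_{p}^{n,\text{str}}$, and that compatibility then follows from Remark \ref{yv}. Your explicit iteration argument for (CP 1)--(CP 4) is just an unpacking of that remark and is consistent with what the paper leaves implicit.
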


\section{Multiple summing polynomials and multilinear operators\label{jt2}}

If $1\leq q\leq p<\infty$ and $n$ is a positive integer$,$ an $n$-linear
operator $T:E_{1}\times\cdots\times E_{n}\rightarrow F$ is multiple\emph{
}$(p;q)$-summing ($T\in\mathcal{L}\Pi_{p,q}^{n,\text{mult}}(E_{1}%
,...,E_{n};F)$) if there exists $C>0$ such that
\begin{equation}
\left(  \sum_{j_{1},...,j_{n}=1}^{\infty}\Vert T(x_{j_{1}}^{(1)},...,x_{j_{n}%
}^{(n)})\Vert^{p}\right)  ^{1/p}\leq C\prod\limits_{k=1}^{n}\Vert(x_{j}%
^{(k)})_{j=1}^{\infty}\Vert_{w,q}\text{ } \label{jup2}%
\end{equation}
for all $(x_{j}^{(k)})_{j=1}^{\infty}\in\ell_{q}^{w}(E_{k})$, $k=1,...,n$. The
infimum of all $C\geq0$ satisfying (\ref{jup2}) defines a complete norm,
denoted by $\left\Vert .\right\Vert _{\mathcal{L}\Pi_{p,q}^{n,\text{mult}}},$
on the space $\mathcal{L}\Pi_{p,q}^{n,\text{mult}}(E_{1},...,E_{n};F)$.

This class was introduced by M.C. Matos \cite{Collect} and, independently, by
F. Bombal, D. P\'{e}rez-Garc\'{\i}a and I. Villanueva \cite{bombal} and was
investigated by different authors in recent years (see, for example
\cite{botp, dddd, d4, popa}).

The ideal of multiple summing polynomials is defined as in Definition \ref{or}
and denoted by $\mathcal{P}\Pi_{p,q}^{n,\text{mult}}$ (and the norm is denoted
by $\left\Vert .\right\Vert _{\mathcal{P}\Pi_{p,q}^{n,\text{mult}}}$). By
using essentially the same arguments from the previous section we can prove:

\begin{theorem}
The sequence $\left(  \left(  \mathcal{P}\Pi_{p,q}^{n,\text{mult}},\left\Vert
.\right\Vert _{\mathcal{P}\Pi_{p,q}^{n,\text{mult}}}\right)  ,\left(
\mathcal{L}\Pi_{p,q}^{n,\text{mult}},\left\Vert .\right\Vert _{\mathcal{L}%
\Pi_{p,q}^{n,\text{mult}}}\right)  \right)  _{n=1}^{\infty}$ is coherent and
compatible with $\Pi_{p,q}.$
\end{theorem}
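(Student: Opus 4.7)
The plan is to follow the template established in Sections \ref{jt0}--\ref{jt}: reduce the verification of the five coherence conditions to four elementary multilinear inequalities plus the built-in polarisation condition (CH 5), and then obtain compatibility by iteration together with Remark \ref{yv}. Since by Definition \ref{or} one has $P\in\mathcal{P}\Pi_{p,q}^{n,\text{mult}}$ if and only if $\overset{\vee}{P}\in\mathcal{L}\Pi_{p,q}^{n,\text{mult}}$ (with equal norms), both (CH 5) and (CP 5) are automatic, and the polynomial-level conditions (CH 2), (CH 4), (CP 2), (CP 4) reduce to the multilinear ones via this identification.

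The core is to establish the four multilinear estimates mirroring Propositions \ref{coerente2}--\ref{coerente4}, with the sum over a single index replaced by a sum over a multi-index $(j_1,\ldots,j_n)$. For the analogue of (CH 1), given $T\in\mathcal{L}\Pi_{p,q}^{n+1,\text{mult}}$ and $a_k\in E_k$, I would insert the degenerate weakly $q$-summable sequence $(a_k,0,0,\ldots)$, whose norm in $\ell_q^w(E_k)$ equals $\|a_k\|$, into the $k$-th slot of the defining inequality for $T$; all but one term in the $j_k$-sum vanishes, yielding $\|T_{a_k}\|_{\mathcal{L}\Pi_{p,q}^{n,\text{mult}}}\le\|T\|_{\mathcal{L}\Pi_{p,q}^{n+1,\text{mult}}}\|a_k\|$. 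For the analogue of (CH 3), the sum of $|\gamma(x_{j_{n+1}}^{(n+1)})|^p\|T(x_{j_1}^{(1)},\ldots,x_{j_n}^{(n)})\|^p$ over $(j_1,\ldots,j_{n+1})$ factors as a product of a sum over $j_{n+1}$ and a multiple sum over the remaining indices; the former is bounded by $\|\gamma\|^p\|(x_j^{(n+1)})\|_{w,p}^p$ and the latter by $\|T\|_{\mathcal{L}\Pi_{p,q}^{n,\text{mult}}}^p\prod_{k=1}^{n}\|(x_j^{(k)})\|_{w,q}^p$, giving $\|\gamma T\|_{\mathcal{L}\Pi_{p,q}^{n+1,\text{mult}}}\le\|\gamma\|\,\|T\|_{\mathcal{L}\Pi_{p,q}^{n,\text{mult}}}$.

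With these two multilinear estimates in hand, the polynomial conditions (CH 2) and (CH 4) follow routinely by passage through $\overset{\vee}{P}$: one uses $(P_a)^{\vee}=\overset{\vee}{P}_a$ (which holds because $\overset{\vee}{P}$ is symmetric in its remaining variables) together with the standard symmetrisation formula
\[
(\gamma P)^{\vee}(x_1,\ldots,x_{n+1})=\frac{1}{n+1}\sum_{i=1}^{n+1}\gamma(x_i)\,\overset{\vee}{P}(x_1,\ldots,\hat{x}_i,\ldots,x_{n+1}).
\]
All four coherence constants can be taken equal to $1$, and compatibility with $\Pi_{p,q}=\mathcal{L}\Pi_{p,q}^{1,\text{mult}}$ then follows by straightforward iteration, giving $\alpha_1=\alpha_2=\alpha_3=\alpha_4=1$. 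The one subtle point, and the place where I expect to be most careful, is the factorisation step in the analogue of (CH 3): it requires $\bigl(\sum_j|\gamma(x_j^{(n+1)})|^p\bigr)^{1/p}\le\|\gamma\|\,\|(x_j^{(n+1)})\|_{w,q}$, which is the composition of the trivial estimate $\bigl(\sum_j|\gamma(x_j)|^p\bigr)^{1/p}\le\|\gamma\|\,\|(x_j)\|_{w,p}$ with the inclusion $\ell_q\hookrightarrow\ell_p$, valid precisely because of the standing hypothesis $q\le p$.
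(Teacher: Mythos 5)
Your proposal is correct and follows exactly the route the paper intends: the paper gives no details for this theorem, merely stating that it follows ``by essentially the same arguments from the previous section,'' and your four multilinear estimates (degenerate sequence insertion for $T_{a_k}$, factorisation of the multiple sum plus $\ell_q^w\hookrightarrow\ell_p^w$ for $\gamma T$, reduction of the polynomial conditions via $\overset{\vee}{P}$ and the symmetrisation formula) are precisely the adaptation of Propositions~\ref{coerente2}--\ref{coerente3} to multi-indexed sums. Your identification of the constants as $1$ and the appeal to Remark~\ref{yv} for compatibility also match the paper's pattern.
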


We finish this short section with an illustrative example on how the concepts of
coherence and compatibility work well together.

\begin{example}
\label{ffiiu}For all $n$ let $\mathcal{U}_{1}=\mathcal{M}_{1}=\Pi_{1,1}$ and
consider the artificial sequence $\left(  \mathcal{U}_{n},\mathcal{M}%
_{n}\right)  _{n=1}^{\infty}$ defined by%
\begin{align*}
\left(  \mathcal{U}_{2n},\mathcal{M}_{2n}\right)  _{n=1}^{\infty}  &  =\left(
\left(  \mathcal{P}\Pi_{1,1}^{2n,\text{mult}},\left\Vert .\right\Vert
_{\mathcal{P}\Pi_{1,1}^{2n,\text{mult}}}\right)  ,\left(  \mathcal{L}\Pi
_{1,1}^{2n,\text{mult}},\left\Vert .\right\Vert _{\mathcal{L}\Pi
_{1,1}^{2,\text{mult}}}\right)  \right)  _{n=1}^{\infty}\\
\left(  \mathcal{U}_{2n+1},\mathcal{M}_{2n+1}\right)  _{n=1}^{\infty}  &
=\left(  \left(  \mathcal{P}\Pi_{1,1}^{2n+1,\text{str}},\left\Vert
.\right\Vert _{\mathcal{P}\Pi_{1,1}^{2n+1,\text{str}}}\right)  ,\left(
\mathcal{L}\Pi_{1,1}^{2n+1,\text{str}},\left\Vert .\right\Vert _{\mathcal{L}%
\Pi_{1,1}^{2n+1,\text{str}}}\right)  \right)  _{n=1}^{\infty}.
\end{align*}
From our previous results it is easy to see that the sequence $\left(
\mathcal{U}_{n},\mathcal{M}_{n}\right)  _{n=1}^{\infty}$ is compatible with
$\Pi_{1,1}.$ On the other hand it is also not difficult to show that $\left(
\mathcal{U}_{n},\mathcal{M}_{n}\right)  _{n=1}^{\infty}$ is not coherent. So
the concepts of coherent and compatible pairs, together, besides filtering
sequences that keep the spirit of a given operator ideal, also seem to be an
adequate method of avoiding artificial constructions.
\end{example}

\section{Strongly coherent and compatible sequences}

An apparently stronger notion of coherence and compatibility can be considered
if we replace (CP2) and (CH2) by (respectively)

\bigskip

(CP2$^{\ast}$) There is a constant $\alpha_{2}>0$ so that if $P\in
\mathcal{U}_{n}\left(  ^{n}E;F\right)  $ and $a\in E$, then $P_{a^{n-1}}%
\in\mathcal{U}\left(  E;F\right)  $ and%
\[
\left\Vert P_{a^{n-1}}\right\Vert _{\mathcal{U}}\leq\alpha_{2}\left\Vert
P\right\Vert _{\mathcal{U}_{n}}\left\Vert a\right\Vert ^{n-1}.
\]

(CH2$^{\ast}$) There is a constant $\beta_{2}>0$ so that if $P\in
\mathcal{U}_{k+1}\left(  ^{k+1}E;F\right)  ,$ $a\in E$, then $P_{a}$ belongs
to $\mathcal{U}_{k}\left(  ^{k}E;F\right)  $ and
\[
\left\Vert P_{a}\right\Vert _{\mathcal{U}_{k}}\leq\beta_{2}\left\Vert
P\right\Vert _{\mathcal{U}_{k+1}}\left\Vert a\right\Vert .
\]

\bigskip

This approach is closer to the original concepts from \cite{CDM09} for
polynomial ideals. For the cases investigated in the Sections \ref{jt0},
\ref{jt} and \ref{jt2} there is no difference between the concepts since%
\[
\left\Vert P\right\Vert _{\mathcal{U}_{n}}=\left\Vert \overset{\vee}%
{P}\right\Vert _{\mathcal{M}_{n}}.
\]
The case of the \textquotedblleft canonical\textquotedblright\ pairs
$(\mathcal{P}_{k},\mathcal{L}_{k})_{k=1}^{\infty}$ (composed by the ideals of
continuous $n$-homogeneous polynomials and continuous $n$-linear operators,
with the $\sup$ norm) illustrates that the notion of strongly coherent pairs
has important restrictions. In fact, from \cite[Proposition 8.5]{BBJP} we know
that (CH2$^{\ast}$) is not valid for this case when dealing with real scalars.

However, for the case of $r$-dominated multilinear operators and polynomials
we remark that strong coherence and compatibility holds.

From now on $n\in\mathbb{N}$ and $r\in\left[  n,\infty\right]  $.

\begin{definition}
A multilinear operator $T:E_{1}\times\cdots\times E_{n}\rightarrow F$ is
$r$-dominated (in this case we write $T\in\mathcal{L}_{d,r}^{n}\left(
E_{1},...,E_{n};F\right)  $) if there exists a constant $C>0$ such that%
\[
\left(  \sum_{j=1}^{m}\left\Vert T\left(  x_{j}^{\left(  1\right)  }%
,\ldots,x_{j}^{\left(  n\right)  }\right)  \right\Vert ^{r/n}\right)
^{n/r}\leq C%
%TCIMACRO{\dprod \limits_{j=1}^{n}}%
%BeginExpansion
{\displaystyle\prod\limits_{j=1}^{n}}
%EndExpansion
\left\Vert \left(  x_{i}^{\left(  j\right)  }\right)  _{i=1}^{m}\right\Vert
_{w,r}%
\]
\ for all $m\in\mathbb{N}$ and $x_{1}^{\left(  j\right)  },\ldots
,x_{m}^{\left(  j\right)  }\in E_{j}$. The smallest such $C$ is denoted by
$\left\Vert T\right\Vert _{d,r}$. It is well-known that $\left(
\mathcal{L}_{d,r},\left\Vert \cdot\right\Vert _{d,r}\right)  $ is a Banach
ideal of multilinear mappings (recall that $n\leq r$).
\end{definition}

The terminology \textquotedblleft dominated\textquotedblright\ is motivated by
the Pietsch Domination Theorem:

\begin{theorem}
[Pietsch, Geiss, 1985](\cite{geisse}) $T\in\mathcal{L}(E_{1},...,E_{n};F)$ is
$r$-dominated if and only if there exist $C\geq0$ and probability measures
$\mu_{j}$ on the Borel $\sigma$-algebras of $B_{E_{j}^{^{\ast}}}$ endowed with
the weak star topologies such that
\[
\left\Vert T\left(  x_{1},...,x_{n}\right)  \right\Vert \leq C\prod
\limits_{j=1}^{n}\left(  \int_{B_{E_{j}^{\ast}}}\left\vert \varphi\left(
x_{j}\right)  \right\vert ^{p}d\mu_{j}\left(  \varphi\right)  \right)  ^{1/p}%
\]
for all $x_{j}\in E_{j}$ and $j=1,...,n$. Moreover, the infimum of the $C$
that satisfy the inequality above is precisely $\left\Vert T\right\Vert
_{d,r}$.
\end{theorem}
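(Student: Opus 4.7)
The plan is to prove the two directions separately, with the ``only if'' implication carrying all the substantial content.

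For the easy direction, I assume the dominating measures $\mu_j$ exist. Given any finite family $(x_i^{(j)})_{i=1,\ldots,m;\,j=1,\ldots,n}$, I raise the pointwise bound to the power $r/n$, sum over $i$, and invoke the $n$-fold H\"older inequality with exponents all equal to $n$ applied to $a_{ij} := \bigl(\int |\varphi(x_i^{(j)})|^r\, d\mu_j\bigr)^{1/n}$. This moves the outer sum inside the product, after which I swap $\sum_i$ with $\int d\mu_j$ and use $\sum_i |\varphi(x_i^{(j)})|^r \le \|(x_i^{(j)})_i\|_{w,r}^r$. Taking $(n/r)$-th powers yields the $r$-dominated inequality with norm at most $C$; in particular $\|T\|_{d,r} \le C$.

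For the ``only if'' direction I would adapt Pietsch's classical domination argument in the form used by Geiss. Let $K_j := (B_{E_j^{\ast}}, w^{\ast})$, which is compact by Alaoglu, and let $W_j$ denote the set of Borel probability measures on $K_j$, convex and weak-star compact in $C(K_j)^{\ast}$; set $W := W_1 \times \cdots \times W_n$. For every finite family $S = \{(x_i^{(1)},\ldots,x_i^{(n)})\}_{i=1}^m$ and every $\varepsilon > 0$, I build a continuous functional $\psi_{S,\varepsilon} \colon W \to \mathbb{R}$ whose non-positivity at $(\mu_1,\ldots,\mu_n)$ encodes the target integral inequality tested on $S$ up to $\varepsilon$. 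Because $\|(x_i)_i\|_{w,r}^r$ is realised as $\sum_i |\varphi(x_i)|^r$ by a Dirac mass at some $\varphi \in B_{E^{\ast}}$, the $r$-domination hypothesis applied to $S$ forces each individual $\psi_{S,\varepsilon}$ to attain a non-positive value on $W$. I then check that the collection $\{\psi_{S,\varepsilon}\}$ is stable under the concatenation/convex combinations required by Ky Fan's lemma, and apply that lemma on the compact convex $W$ to extract a common $(\mu_1^{\ast},\ldots,\mu_n^{\ast}) \in W$ at which every $\psi_{S,\varepsilon}$ is simultaneously non-positive. Specialising to singletons $S = \{(x_1,\ldots,x_n)\}$ and letting $\varepsilon \to 0$ delivers the pointwise integral domination, and a standard limit over finite dimensional subspaces removes any measurability concern.

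The main obstacle is the product structure $\prod_j \bigl(\int \cdots d\mu_j\bigr)^{1/n}$: it is neither affine nor concave in the tuple $(\mu_1,\ldots,\mu_n)$, so a naive Ky Fan argument does not directly apply. The standard workaround is the AM--GM inequality $\prod_j a_j^{1/n} \le \tfrac{1}{n}\sum_j a_j$, which linearises the relevant functional on $W$ at the cost of a multiplicative constant. The sharp identification of the optimal constant with $\|T\|_{d,r}$ is then recovered by the classical dilation trick $x_i^{(j)} \mapsto \lambda_j x_i^{(j)}$ with $\lambda_j > 0$ subject to $\prod_j \lambda_j = 1$ and optimising over the $\lambda_j$, which precisely undoes the AM--GM loss. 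Combined with the ``if'' direction this pins down the infimum, completing the proof.
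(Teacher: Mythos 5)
The paper offers no proof of this statement: it is quoted as a known theorem with a citation to Geiss's 1985 Diplomarbeit, so there is no in-paper argument to compare against. Your proposal is, in substance, the standard proof of the multilinear Pietsch domination theorem and it is correct in both directions. The ``if'' direction via H\"older with exponents $(n,\ldots,n)$ followed by interchanging $\sum_i$ with $\int d\mu_j$ is exactly right and gives $\left\Vert T\right\Vert _{d,r}\leq C$. In the ``only if'' direction the two points that genuinely need care are the ones you flag: (i) stability of the family $\{\psi_S\}$ under convex combinations is obtained not by concatenation alone but by first rescaling all vectors of the tuples in $S_1$ and $S_2$ by $t^{1/r}$ and $(1-t)^{1/r}$ respectively --- both the term $\sum_i\left\Vert T(x_i^{(1)},\ldots,x_i^{(n)})\right\Vert ^{r/n}$ and the linearised integral term are then multiplied by exactly $t$ and $1-t$, so the combination is again a member of the family and Ky Fan applies to the affine continuous $\psi_S$ on the compact convex $W$; and (ii) the AM--GM linearisation $\prod_j a_j^{1/n}\leq\frac{1}{n}\sum_j a_j$ together with the dilation $x_j\mapsto\lambda_j x_j$, $\prod_j\lambda_j=1$, does recover the product bound with the same constant, provided you treat separately the degenerate case where some $\int|\varphi(x_j)|^{r}\,d\mu_j^{\ast}=0$ (let the corresponding $\lambda_j\to\infty$ to conclude $T(x_1,\ldots,x_n)=0$). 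Two cosmetic remarks: the $\varepsilon$ in $\psi_{S,\varepsilon}$ is superfluous, since $\varphi\mapsto\sum_i|\varphi(x_i^{(j)})|^{r}$ is weak-star continuous on the weak-star compact ball $B_{E_j^{\ast}}$ and the supremum defining $\left\Vert \cdot\right\Vert _{w,r}$ is therefore attained by a Dirac mass; and the exponent $p$ in the displayed inequality of the statement should read $r$.
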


For recent generalisations of the Pietsch Domination Theorem and related
results we mention \cite{jmaa1, ps2, london}. The concept of $r$-dominated
polynomial is similar, \textit{mutatis mutandis}, to the notion for
multilinear operators:

\begin{definition}
An $n$-homogeneous polynomial $P:E\rightarrow F$ is $r$-dominated (in this
case we write $P\in\mathcal{P}_{d,r}^{n}\left(  ^{n}E;F\right)  $), if there
is a constant $C>0$ such that
\[
\left(  \sum_{i=1}^{m}\left\Vert P\left(  x_{i}\right)  \right\Vert
^{r/n}\right)  ^{n/r}\leq C\left\Vert \left(  x_{i}\right)  _{i=1}%
^{m}\right\Vert _{w,r}^{k}%
\]
for all $m\in\mathbb{N}$ and $x_{1},\ldots,x_{m}\in E$. The smallest such $C$
is denoted by $\left\Vert P\right\Vert _{d,r}$. It is well-known that $\left(
\mathcal{P}_{d,r},\left\Vert \cdot\right\Vert _{d,r}\right)  $ is a Banach
ideal of polynomials.
\end{definition}

The next result is folklore:

\begin{proposition}
\label{Dominated1} A polynomial $P$ belongs to $\mathcal{P}_{d,r}^{n}\left(
^{n}E;F\right)  $ if and only if $\overset{\vee}{P}$ belongs to $\mathcal{L}%
_{d,r}^{n}\left(  ^{n}E;F\right)  $.
\end{proposition}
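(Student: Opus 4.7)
The plan is to treat the two implications of Proposition~\ref{Dominated1} separately. The ($\Leftarrow$) direction is immediate: assuming $\overset{\vee}{P} \in \mathcal{L}_{d,r}^{n}(^{n}E;F)$, substituting $x_{j}^{(1)} = \cdots = x_{j}^{(n)} = x_{j}$ into the dominance inequality for $\overset{\vee}{P}$, and using the identity $P(x) = \overset{\vee}{P}(x,\ldots,x)$, one obtains
\[
\Bigl(\sum_{j=1}^{m} \|P(x_{j})\|^{r/n}\Bigr)^{n/r} \leq \|\overset{\vee}{P}\|_{d,r}\, \|(x_{j})_{j=1}^{m}\|_{w,r}^{n},
\]
whence $P \in \mathcal{P}_{d,r}^{n}$ with $\|P\|_{d,r} \leq \|\overset{\vee}{P}\|_{d,r}$.

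For the nontrivial direction, I would proceed via the Polarization Formula
\[
\overset{\vee}{P}(x_{j}^{(1)}, \ldots, x_{j}^{(n)}) = \frac{1}{n!\,2^{n}} \sum_{\varepsilon_{i} = \pm 1} \varepsilon_{1} \cdots \varepsilon_{n}\, P\bigl(\varepsilon_{1} x_{j}^{(1)} + \cdots + \varepsilon_{n} x_{j}^{(n)}\bigr).
\]
Taking norms, raising to the power $r/n$ (which is at least $1$ since $r \geq n$, so that the convexity inequality $(\tfrac{1}{N}\sum a_{\varepsilon})^{r/n} \leq \tfrac{1}{N}\sum a_{\varepsilon}^{r/n}$ applies), and summing over $j$, the $r$-dominance of $P$ combined with the triangle inequality $\|(\varepsilon_{1} x_{j}^{(1)} + \cdots + \varepsilon_{n} x_{j}^{(n)})_{j}\|_{w,r} \leq \sum_{k} \|(x_{j}^{(k)})_{j}\|_{w,r}$ for the weak $\ell_{r}$ norm yields the auxiliary \emph{sum}-type estimate
\[
\Bigl(\sum_{j} \|\overset{\vee}{P}(x_{j}^{(1)}, \ldots, x_{j}^{(n)})\|^{r/n}\Bigr)^{n/r} \leq \|P\|_{d,r}\, \Bigl(\sum_{k=1}^{n} \|(x_{j}^{(k)})_{j}\|_{w,r}\Bigr)^{n}.
\]

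The main obstacle is converting this sum-type inequality into the product-type bound required by the definition of $\mathcal{L}_{d,r}^{n}$. Since both sides are positively $1$-homogeneous in each factor $(x_{j}^{(k)})_{j}$, I would rescale $x_{j}^{(k)} \mapsto \lambda_{k} x_{j}^{(k)}$ with $\lambda_{k} > 0$, producing $\lambda_{1} \cdots \lambda_{n} \cdot [\mathrm{LHS}] \leq \|P\|_{d,r}(\sum_{k} \lambda_{k} a_{k})^{n}$, where $a_{k} := \|(x_{j}^{(k)})_{j}\|_{w,r}$. Choosing $\lambda_{k} = 1/a_{k}$ when all $a_{k} > 0$ (the degenerate cases reducing to the trivial fact that $\overset{\vee}{P}$ vanishes on a zero factor), and invoking the AM-GM inequality $(\sum_{k} \mu_{k})^{n} \geq n^{n}\prod_{k} \mu_{k}$, one obtains
\[
\Bigl(\sum_{j} \|\overset{\vee}{P}(x_{j}^{(1)}, \ldots, x_{j}^{(n)})\|^{r/n}\Bigr)^{n/r} \leq n^{n}\, \|P\|_{d,r} \prod_{k=1}^{n} \|(x_{j}^{(k)})_{j}\|_{w,r},
\]
so $\overset{\vee}{P} \in \mathcal{L}_{d,r}^{n}$ with norm controlled by the standard polarization factor, completing the proof.
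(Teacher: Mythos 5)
Your proof is correct. Note that the paper itself offers no argument here -- it labels the proposition ``folklore'' -- so there is nothing to compare against directly; your route (polarization, the power--mean inequality for the exponent $r/n\geq 1$, the triangle inequality for $\Vert\cdot\Vert_{w,r}$, and then the separate $1$-homogeneity in each slot to upgrade the sum-type bound $\left(\sum_{k}a_{k}\right)^{n}$ to the product bound $n^{n}\prod_{k}a_{k}$) is the standard one, and it parallels the polarization computation the authors do carry out for Proposition \ref{bhy}. Two cosmetic remarks: the appeal to AM--GM is redundant once you set $\lambda_{k}=1/a_{k}$, since then $\sum_{k}\lambda_{k}a_{k}=n$ directly; and if you keep the factor $1/(n!\,2^{n})$ from the polarization formula throughout, you recover the sharper classical constant $n^{n}/n!$ rather than $n^{n}$.
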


From \cite{CDM09} we know that (CP2$^{\ast}$), (CH2$^{\ast}$), (CP4) and (CH4)
are valid for $\left(  \mathcal{P}_{d,r}^{n},\left\Vert \cdot\right\Vert
_{d,r}\right)  _{n=1}^{\infty}.$

The other properties are easily verified. For example, let us check (CH3):

\begin{proposition}
\label{Dominated7} If $T\in\mathcal{L}_{d,r}^{k}\left(  E_{1},\ldots
,E_{k};F\right)  $ and $\gamma\in E_{k+1}^{\ast}$, $\gamma T\in\mathcal{L}%
_{d,r}^{k+1}\left(  E_{1},\ldots,E_{k+1};F\right)  $ then
\[
\left\Vert \gamma T\right\Vert _{d,r}\leq\left\Vert \mathcal{\gamma
}\right\Vert \left\Vert T\right\Vert _{d,r}.
\]

\end{proposition}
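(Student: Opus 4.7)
The plan is to unwind the definition of the $r$-dominated norm for $\gamma T$ (viewed as a $(k+1)$-linear operator) and then reduce the resulting sum to two factors that can be controlled separately: one that matches the $r$-dominated inequality for $T$ at level $k$, and one that matches the $\ell_r^w$-norm of the sequence paired with $\gamma$. The main tool is an application of Hölder's inequality with carefully chosen exponents, and the only subtle point is to pick those exponents so that the exponent $r/(k+1)$ (natural at level $k+1$) matches $r/k$ on the $T$-side and $r$ on the $\gamma$-side.

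Fix $m\in\mathbb{N}$, sequences $(x_i^{(j)})_{i=1}^{m}\subset E_j$ for $j=1,\ldots,k+1$, and observe that
\[
\left(\gamma T\right)\!\left(x_i^{(1)},\ldots,x_i^{(k+1)}\right)=\gamma\!\left(x_i^{(k+1)}\right)T\!\left(x_i^{(1)},\ldots,x_i^{(k)}\right),
\]
so the $r/(k+1)$-power sum factors as
\[
S:=\sum_{i=1}^{m}\left\|\left(\gamma T\right)\!\left(x_i^{(1)},\ldots,x_i^{(k+1)}\right)\right\|^{r/(k+1)}=\sum_{i=1}^{m}\left|\gamma\!\left(x_i^{(k+1)}\right)\right|^{r/(k+1)}\!\left\|T\!\left(x_i^{(1)},\ldots,x_i^{(k)}\right)\right\|^{r/(k+1)}.
\]
I would then apply Hölder's inequality to the right-hand side with conjugate exponents $p=\frac{k+1}{k}$ and $q=k+1$ (so that $\frac{1}{p}+\frac{1}{q}=1$), placing the $T$-terms on the $p$-side and the $\gamma$-terms on the $q$-side. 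The exponents are engineered so that $\frac{r}{k+1}\cdot p=\frac{r}{k}$ and $\frac{r}{k+1}\cdot q=r$, yielding
\[
S\leq\left(\sum_{i=1}^{m}\left\|T\!\left(x_i^{(1)},\ldots,x_i^{(k)}\right)\right\|^{r/k}\right)^{\!k/(k+1)}\left(\sum_{i=1}^{m}\left|\gamma\!\left(x_i^{(k+1)}\right)\right|^{r}\right)^{\!1/(k+1)}.
\]

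Raising both sides to the $(k+1)/r$ power, the first factor is bounded via the $r$-dominated property of $T$ by $\|T\|_{d,r}\prod_{j=1}^{k}\|(x_i^{(j)})_{i=1}^{m}\|_{w,r}$, and the second factor is bounded by $\|\gamma\|\,\|(x_i^{(k+1)})_{i=1}^{m}\|_{w,r}$ directly from the definition of the weak $\ell_r$-norm. Multiplying gives
\[
\left(\sum_{i=1}^{m}\left\|\left(\gamma T\right)\!\left(x_i^{(1)},\ldots,x_i^{(k+1)}\right)\right\|^{r/(k+1)}\right)^{\!(k+1)/r}\leq\|\gamma\|\,\|T\|_{d,r}\prod_{j=1}^{k+1}\left\|(x_i^{(j)})_{i=1}^{m}\right\|_{w,r},
\]
which shows $\gamma T\in\mathcal{L}_{d,r}^{k+1}(E_1,\ldots,E_{k+1};F)$ and delivers the desired norm inequality $\|\gamma T\|_{d,r}\leq\|\gamma\|\,\|T\|_{d,r}$ after taking the infimum over constants. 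There is no real obstacle here; the only place one must be careful is the bookkeeping of the Hölder exponents, and the requirement $r\geq k+1$ (built into the definition at level $k+1$) ensures the $\ell^{r/(k+1)}$-quasi-norm is genuinely a norm so that all steps are legitimate.
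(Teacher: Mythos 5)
Your proof is correct, but it takes a genuinely different route from the paper. You work directly from the defining summing inequality: you factor $\bigl\Vert (\gamma T)(x_i^{(1)},\ldots,x_i^{(k+1)})\bigr\Vert^{r/(k+1)}$ as a product and apply H\"older with conjugate exponents $\tfrac{k+1}{k}$ and $k+1$, which converts the level-$(k+1)$ exponent $r/(k+1)$ into the level-$k$ exponent $r/k$ on the $T$-side and into $r$ on the $\gamma$-side; the two resulting factors are then controlled by $\Vert T\Vert_{d,r}\prod_{j=1}^{k}\Vert(x_i^{(j)})_i\Vert_{w,r}$ and by $\Vert\gamma\Vert\,\Vert(x_i^{(k+1)})_i\Vert_{w,r}$ (the latter since $\gamma/\Vert\gamma\Vert\in B_{E_{k+1}^{\ast}}$; you should dispose of $\gamma=0$ separately, though it is trivial). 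The paper instead invokes the Pietsch Domination Theorem: it takes the probability measures $\mu_1,\ldots,\mu_k$ dominating $T$, adjoins the Dirac measure $\delta_{\gamma/\Vert\gamma\Vert}$ on $B_{E_{k+1}^{\ast}}$ as the $(k+1)$-st measure, and reads off the domination inequality for $\gamma T$ with constant $\Vert\gamma\Vert\,\Vert T\Vert_{d,r}$. Your argument is more elementary in that it needs no characterization theorem and only the definition plus H\"older bookkeeping; the paper's argument avoids all exponent juggling and is the one that generalizes painlessly to variants where a domination theorem is available but the exponents do not mesh so neatly. Both yield the same constant. One small caveat for your version: the definition allows $r=\infty$ (with sums replaced by suprema), where your H\"older step degenerates but the claim is immediate; it is worth a one-line remark.
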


\begin{proof}
From the Pietsch Domination Theorem there are Borel probability measures
$\mu_{1},...,\mu_{k}$ so that
\[
\left\Vert T(x_{1},...,x_{k})\right\Vert \leq\left\Vert T\right\Vert _{d,r}%
%TCIMACRO{\tprod \limits_{i=1}^{k}}%
%BeginExpansion
{\textstyle\prod\limits_{i=1}^{k}}
%EndExpansion
\left(
%TCIMACRO{\tint \nolimits_{B_{E_{i}^{\ast}}}}%
%BeginExpansion
{\textstyle\int\nolimits_{B_{E_{i}^{\ast}}}}
%EndExpansion
\left\vert \varphi_{i}(x_{i})\right\vert ^{r}d\mu_{i}\right)  ^{1/r}.
\]
By considering the Dirac measure $\delta_{\gamma/\left\Vert \gamma\right\Vert
}$ we have%
\begin{align*}
&  \left\Vert \gamma(x_{k+1})T(x_{1},...,x_{k})\right\Vert \\
&  \leq\left\Vert \gamma\right\Vert \left\Vert T\right\Vert _{d,r}\left\vert
\frac{\gamma}{\left\Vert \gamma\right\Vert }(x_{k+1})\right\vert
%TCIMACRO{\tprod \limits_{i=1}^{k}}%
%BeginExpansion
{\textstyle\prod\limits_{i=1}^{k}}
%EndExpansion
\left(
%TCIMACRO{\tint \nolimits_{B_{E_{i}^{\ast}}}}%
%BeginExpansion
{\textstyle\int\nolimits_{B_{E_{i}^{\ast}}}}
%EndExpansion
\left\vert \varphi_{i}(x_{i})\right\vert ^{r}d\mu_{i}\right)  ^{1/r}\\
&  =\left\Vert \gamma\right\Vert \left\Vert T\right\Vert _{d,r}\left(
%TCIMACRO{\tint \nolimits_{B_{E_{k+1}^{\ast}}}}%
%BeginExpansion
{\textstyle\int\nolimits_{B_{E_{k+1}^{\ast}}}}
%EndExpansion
\left\vert \varphi_{k+1}(x_{k+1})\right\vert ^{r}d\delta_{\gamma/\left\Vert
\gamma\right\Vert }\right)  ^{1/r}%
%TCIMACRO{\tprod \limits_{i=1}^{k}}%
%BeginExpansion
{\textstyle\prod\limits_{i=1}^{k}}
%EndExpansion
\left(
%TCIMACRO{\tint \nolimits_{B_{E_{i}^{\ast}}}}%
%BeginExpansion
{\textstyle\int\nolimits_{B_{E_{i}^{\ast}}}}
%EndExpansion
\left\vert \varphi_{i}(x_{i})\right\vert ^{r}d\mu_{i}\right)  ^{1/r}%
\end{align*}
and again the Pietsch Domination Theorem asserts that%
\[
\left\Vert \gamma T\right\Vert _{d,r}\leq\left\Vert \gamma\right\Vert
\left\Vert T\right\Vert _{d,r}.
\]

\end{proof}

So, we have:

\begin{proposition}
The sequence $\left(  \left(  \mathcal{P}_{d,r}^{n},\left\Vert .\right\Vert
_{d,r}\right)  ,\left(  \mathcal{L}_{d,r}^{n},\left\Vert .\right\Vert
_{d,r}\right)  \right)  _{n=1}^{\infty}$ is strongly coherent and strongly
compatible with $\Pi_{r}.$
\end{proposition}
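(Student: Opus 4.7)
The plan is to verify the ten conditions (CP1), (CP2$^{\ast}$), (CP3), (CP4), (CP5), (CH1), (CH2$^{\ast}$), (CH3), (CH4), (CH5) with uniform constants $\alpha_i=\beta_i=1$. Six of them are already at hand: (CP5) and (CH5) are exactly Proposition \ref{Dominated1}; (CP2$^{\ast}$), (CP4), (CH2$^{\ast}$), and (CH4) are established for the polynomial sequence $(\mathcal{P}_{d,r}^n,\|\cdot\|_{d,r})$ in \cite{CDM09} and so transfer immediately; and (CH3) is Proposition \ref{Dominated7}. It therefore remains to check (CH1), (CP1), and (CP3), all three of which should be immediate consequences of the Pietsch Domination Theorem.

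For (CH1) I would take $T\in\mathcal{L}_{d,r}^{k+1}(E_1,\ldots,E_{k+1};F)$ and invoke the Pietsch Domination Theorem to produce Borel probability measures $\mu_1,\ldots,\mu_{k+1}$ on the respective weak-star closed unit balls with
\[
\|T(x_1,\ldots,x_{k+1})\|\leq\|T\|_{d,r}\prod_{i=1}^{k+1}\left(\int_{B_{E_i^{\ast}}}|\varphi(x_i)|^r\,d\mu_i(\varphi)\right)^{1/r}.
\]
Substituting $x_j=a_j$ and bounding the $j$-th factor by $\|a_j\|$, the remaining $k$ integrals exhibit a Pietsch domination of $T_{a_j}$ with constant $\|T\|_{d,r}\|a_j\|$; the converse direction of the Pietsch Domination Theorem then gives $T_{a_j}\in\mathcal{L}_{d,r}^k$ and $\|T_{a_j}\|_{d,r}\leq\|T\|_{d,r}\|a_j\|$. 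This is the same pattern used for (CH3) in Proposition \ref{Dominated7}, with a Dirac measure replaced by one coordinate's integral being frozen at a point.

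Conditions (CP1) and (CP3) then follow by a direct iteration. For (CP1), applying (CH1) a total of $n-1$ times to $T\in\mathcal{L}_{d,r}^n$, fixing one by one the coordinates $a_j$ for $j\ne k$, reduces $T$ to a $1$-linear map in $\mathcal{L}_{d,r}^1=\Pi_r$, and each step contributes a factor $\|a_j\|$, yielding $\|T_{a_1,\ldots,a_{k-1},a_{k+1},\ldots,a_n}\|_{\Pi_r}\leq\|T\|_{\mathcal{M}_n}\prod_{j\ne k}\|a_j\|$. For (CP3), applying (CH3) (Proposition \ref{Dominated7}) successively to $u\in\Pi_r=\mathcal{L}_{d,r}^1$ raises the degree of linearity from $1$ to $n$ while picking up one factor $\|\gamma_i\|$ at each step, giving $\|\gamma_1\cdots\gamma_{n-1}u\|_{d,r}\leq\|\gamma_1\|\cdots\|\gamma_{n-1}\|\|u\|_{\Pi_r}$.

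The only genuinely new work is (CH1); everything else is either granted by \cite{CDM09}, already proved in this section, or a formal iteration of (CH1) and (CH3). Since all constants produced this way equal $1$, strong coherence holds, and by Remark \ref{yv} strong compatibility with $\Pi_r=\mathcal{L}_{d,r}^1$ follows as well, completing the argument.
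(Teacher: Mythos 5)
Your proposal is correct and follows essentially the same route as the paper: cite \cite{CDM09} for the polynomial conditions (CP2$^{\ast}$), (CH2$^{\ast}$), (CP4), (CH4), use Proposition \ref{Dominated1} for (CP5)/(CH5), and handle the multilinear conditions via the Pietsch Domination Theorem as in Proposition \ref{Dominated7}. The paper leaves (CH1), (CP1), and (CP3) as ``easily verified''; your coordinate-freezing argument for (CH1) and the iterations for (CP1) and (CP3) are exactly the intended details and are sound.
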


\begin{remark}
Since $\left\Vert P\right\Vert _{d,r}\leq\left\Vert \overset{\vee}%
{P}\right\Vert _{d,r}$ for all $P\in\mathcal{P}_{d,r}^{n}$ it is obvious that
the sequence
\[
\left(  \left(  \mathcal{P}_{d,r}^{n},\left\Vert .\right\Vert _{d,r}\right)
,\left(  \mathcal{L}_{d,r}^{n},\left\Vert .\right\Vert _{d,r}\right)  \right)
_{n=1}^{\infty}%
\]
is also coherent and compatible with $\Pi_{r}.$
\end{remark}

\textbf{Acknowledgement. }The authors thank Geraldo Botelho for important suggestions.

\end{document}